\newtheorem{theorem}{Theorem}[section]
\newtheorem{definition}[theorem]{Definition}
\newtheorem{lemma}[theorem]{Lemma}
\newtheorem{proposition}[theorem]{Proposition}
\newtheorem{corollary}[theorem]{Corollary}
\newtheorem{remark}[theorem]{Remark}
\newtheorem{example}[theorem]{Example}
\newtheorem{examples}[theorem]{Examples}
\newcommand{\oo}{{\mathbb{O}}}
\newcommand{\hh}{{\mathbb{H}}}
\newcommand{\cc}{{\mathbb{C}}}
\newcommand{\rr}{{\mathbb{R}}}
\newcommand{\nn}{{\mathbb{N}}}
\newcommand{\D}{\mathbb{D}}
\newcommand{\s}{{\mathbb{S}}}
\newcommand{\I}{\mathcal{I}}
\newcommand{\study}{{\mathscr{S}}}
\newcommand\vs[1]{{#1}_s^\circ}
\newcommand{\punto}{\bullet}
\newcommand\re{\operatorname{Re}}
\newcommand\im{\operatorname{Im}}
\newcommand\lra{\longrightarrow}
\newcommand{\ui}{\imath}
\newcommand{\OO}{\Omega}
\newcommand{\mbb}{\mathbb}
\newcommand{\mr}{\mathrm}
\newcommand{\mscr}{\mathscr}
\newcommand{\R}{\mbb{R}}
\newcommand{\mc}{\mathcal}
\newcommand{\hslashslash}{%
  \raisebox{.9ex}{%
    \scalebox{.7}{%
      \rotatebox[origin=c]{18}{$-$}%
    }%
  }%
}
\newcommand{\fslash}{%
  {%
   \vphantom{f}%
   \ooalign{\kern.05em\smash{\hslashslash}\hidewidth\cr$f$\cr}%
   \kern.05em
  }%
}
\title{\bf The algebra of slice functions}
\author{Riccardo Ghiloni\\
 Alessandro Perotti\\
\small Dipartimento di Matematica, Universit\`a di Trento\\ 
\small Via Sommarive 14, I-38123 Povo Trento, Italy\\
\small riccardo.ghiloni@unitn.it, alessandro.perotti@unitn.it\\
\and
Caterina Stoppato
\\ 
\small Istituto Nazionale di Alta Matematica\\
\small Unit\`a di Ricerca di Firenze c/o \\
\small DiMaI ``U. Dini'' Universit\`a di Firenze \\
\small Viale Morgagni 67/A, I-50134 Firenze, Italy\\
\small stoppato@math.unifi.it}
\date{  }
\begin{document}

\maketitle


\begin{abstract}
In this paper we study some fundamental algebraic properties of slice functions and slice regular functions over an alternative $^*$-algebra $A$ over $\rr$. These recently introduced function theories generalize to higher dimensions the classical theory of functions of a complex variable. Slice functions over $A$, which comprise all polynomials over $A$, form an alternative $^*$-algebra themselves when endowed with appropriate operations. We presently study this algebraic structure in detail and we confront with questions about the existence of multiplicative inverses. This study leads us to a detailed investigation of the zero sets of slice functions and of slice regular functions, which are of course of independent interest.
\end{abstract}


\section*{Introduction}

In the theory of functions of one complex variable, the notion of holomorphy plays a leading role not only for its analytic meaning but also because of its algebraic repercussions. Indeed, the discreteness of the zero sets of holomorphic functions allows to consider quotients of functions, which are an important tool in the construction of the theory. On the contrary, without the holomorphy assumption it is easy to choose nonzero elements $f,g$ of the algebra of functions $\cc \to \cc$  that are not identically zero but have $fg\equiv0$; that is, elements $f,g\not \equiv 0$ of the algebra that are zero divisors and therefore do not admit multiplicative inverses. As the theory of holomorphy built on, the fertile interplay between its analytic and algebraic sides remained one of its distinctive traits.

In dimensions higher than two -- chiefly over the algebras of quaternions $\hh$, that of octonions $\oo$ and over the Clifford algebras $C\ell_{0,m}=\R_m$ -- several theories introduced during the last century have met with success in matching many analytic features of the theory of holomorphic functions of one complex variable. Among the monographs on this subject, let us mention \cite{librosommen,GHS}. On the other hand, the search for an approach better adapted to certain algebraic requirements led to introducing and developing, over the last decade, a new function theory.

One of such requirements was, to include the classical theory of polynomials. As is well known (cf.~\cite[\S 16]{Lam}), the ring of polynomials over a noncommutative ring $R$ is defined by fixing the position of the coefficients, e.g., on the right: 
\[
f(x) = a_0+ x a_1+\ldots + x^n a_n;
\]
and by imposing commutativity of the indeterminate with the coefficients when such a polynomial is multiplied by another one, say $g(x) = b_0+ x a_1+\ldots + x^l b_l$:
\begin{equation}\label{introproduct}
(f \cdot g)(x) := a_0b_0 + x (a_0b_1+a_1b_0) + \ldots + x^{n+l}(a_n b_l).
\end{equation}
In general, the evaluation of this product at $y \in R$ is different from the product 
\[f(y)g(y) = a_0b_0 + a_0(yb_1)+(ya_1)b_0+\ldots + (y^na_n)(y^lb_l)\]
of the respective evaluations at $y$ (which we may call the pointwise product of $f$ and $g$ at $y$). However, a direct computation (based on the associativity of the multiplication in $R$) shows that: if $f(x)=0$ then $(f\cdot g)(x) =0$; if $f(x)$ is invertible, then
\begin{equation}\label{introassociativeproduct}
(f\cdot g)(x)=f(x)g(f(x)^{-1}xf(x)).
\end{equation}

In~\cite{cras,advances}, the authors constructed a class of quaternionic functions that includes the ring of polynomials just described (with $R=\hh$). They used the fact that for all $J \in \s_\hh := \{I \in \hh : I^2=-1\}$ the real subalgebra $\cc_J$ generated by $1$ and $J$ is isomorphic to $\cc$ and they decomposed the algebra into such ``slices'':
\[\hh=\bigcup_{J \in \s}\cc_J.\] 
On an open set $\OO\subseteq\hh$, they defined a differentiable function $f:\OO\rightarrow\hh$ to be (Cullen or) \emph{slice regular} if, for each $J\in\s$, the restriction of $f$ to $\OO_J:=\OO\cap\cc_J$ is a holomorphic function from $\OO_J$ to $\hh$, both endowed with complex structures defined by left multiplication by $J$. This definition comprises all polynomials and convergent power series of the form 
\begin{equation}\label{intropolynomials}
\sum_{n \in \nn} x^n a_n,\quad\{a_n\}_{n \in \nn} \subset \hh.
\end{equation}
At later stages the theory was endowed with a multiplicative operation, which generalizes~\eqref{introproduct} and verifies~\eqref{introassociativeproduct}; the zero sets were studied, proving to be the union of isolated points or isolated $2$-spheres of a special type; and the skew field of quotients of slice regular functions was constructed. This solid algebraic structure allowed to enlighten other features of slice regularity, which recall the theory of holomorphic complex functions. On these matters, we refer the reader to~\cite{librospringer}, which also points out the original references.

The definition of slice regular function carries over to the algebra of octonions $\oo$, see~\cite{rocky};~\cite{ghiloni} is another work specifically devoted to the octonionic case. The article~\cite{clifford} addressed the question for the Clifford algebra $\R_3$ and~\cite{israel} treated the case of functions from $\rr^{m+1}$ to the Clifford algebra $\R_m$, defining the notion of \emph{slice monogenic} function. The basic material on slice monogenicity is collected in~\cite{librodaniele2}, which also points out the original references. Finally,~\cite{perotti} introduced a new approach that is valid on the whole class of alternative $^*$-algebras over $\rr$ and comprises the cases just mentioned. The theory has then developed in subsequent literature, along with its applications to problems arising from other fields of mathematics: the definition of a functional calculus on such algebras (see ~\cite{librodaniele2,QSFCalculus}); the construction and classification of orthogonal complex structures on open dense subsets of $\rr^4\simeq \hh$,~\cite{ocs}; and the definition of sectorial operators over associative $^*$-algebras,~\cite{semigroups}.

The approach introduced in~\cite{perotti} for an alternative $^*$-algebra $A$ over $\rr$ makes use of the complexified algebra $A \otimes_{\R} \cc$, denoted by $A_{\cc}$. Let us denote its elements as $a+\ui b$, where $a,b \in A$ and $\ui$ is a special notation for the imaginary unit of $\cc$ designed to distinguish it from the $i$ that appears as an element of $A$ in some instances (e.g., $A=\cc,\hh,\oo$). Consider for simplicity the case of a quaternionic polynomial $f$ of the form~\eqref{intropolynomials}. For each $J \in \s_\hh$, the restriction $f:\cc_J \to \hh$ can be lifted through the map $\phi_J: \hh_\cc \to \hh, \phi_J(a+\ui b):=a+Jb$ and it turns out that the lift does not depend on $J$. In other words, there exists a polynomial function $F: \cc \simeq \rr_\cc \to \hh_\cc$ which makes the following diagram commute for all $J \in \s_\hh$:
\begin{equation}\label{lifting}
\begin{CD}
\rr_\cc @>F> >\hh_\cc\\ 
@V V \phi_J V 
@V V \phi_J V\\
\hh @>f> >\hh 
\end{CD}
\end{equation}
If another quaternionic polynomial $g$ lifts to a function $G$, then the product~\eqref{introproduct} lifts to the pointwise product $x \mapsto F(x)G(x)$ in $\hh_\cc$. Actually, the lift can be performed for any slice regular quaternionic function $f$. The resulting $F$ is holomorphic from $\rr_\cc$ to $\hh_\cc$ (both endowed with complex structures defined by left multiplication by $\ui$) and it belongs to the so-called class of \emph{stem functions}. This interpretation suggested to consider the entire class of \emph{slice functions}, namely those functions $f$ that admit lifts of type~\eqref{lifting} to stem functions $F$ (not necessarily holomorphic). We may describe this class as the most general family of functions compatible with the ``slice'' nature of $\hh$. The \emph{slice product}  $f\cdot g$ of two slice functions $f,g$ that lift to $F,G$ is then defined as the function that lifts to the pointwise product $FG$. This is a consistent generalization of~\eqref{introproduct} and of the multiplicative operation defined among slice regular functions. Moreover, it verifies formula~\eqref{introassociativeproduct}.

Now, instead of $\hh$ let us consider another alternative $^*$-algebra $A$ over $\rr$ (see Subsection~\ref{sec:algebras} for definition, properties and examples). An $A$-valued function is defined to be a slice function if it lifts to an $A_\cc$-valued stem function in a diagram similar to~\eqref{lifting}. If, moreover, its lift is holomorphic, then it is called a slice regular function. This is the key idea of the construction performed in~\cite{perotti}, which we will overview in full detail in Subsection~\ref{sec:functions}. The effectiveness of working with slice functions without restricting to the slice regular ones can be appreciated by looking at some of its applications, for instance the Cauchy integral formula for slice functions of class $\mscr{C}^1$,~\cite[Theorem~27]{perotti}, or the definition of a continuous (slice) functional calculus for normal operators in quaternionic Hilbert spaces~\cite{QSFCalculus}.

It is in this generality that we presently study the fundamental algebraic properties of the class of slice functions and of the subclass of slice regular functions. The paper is organized as follows. 

In Section~\ref{sec:preliminaries}, as we already mentioned, we present some preliminary material and we make some assumptions, which we will suppose valid throughout the paper. First, we recall the definition of real alternative $^*$-algebra and some basic properties of such an algebra. In particular, we study the connections between the invertibility of elements and the $^*$-involution of the algebra. We give several examples to describe the variety of algebraic phenomena that can occur, in both the associative and nonassociative settings: the division algebras $\cc,\hh,\oo$; the Clifford algebras $C\ell(p,q)$ (including $\rr_m= C\ell(0,m)$ but also the algebras of split complex numbers $\s\cc$ and of split quaternions $\s\hh$); the algebra of dual quaternions $\D\hh$; and the algebra of split octonions $\s\oo$. Then, we overview in detail the definitions of slice function and of slice regular function, along with examples and with the basic operations on these functions.
 
In Section~\ref{sec:reciprocal}, we look at the set of slice functions, itself, as an alternative $^*$-algebra. Building on the foundations prepared in the previous section, we determine the multiplicative inverse $f^{-\punto}$ of a slice function $f$. This is a more delicate matter than it was for slice regular quaternionic functions (see~\cite[\S5.1]{librospringer} and references) or slice monogenic functions (see~\cite[\S2.6]{librodaniele2} and references).

Section~\ref{sec:explicit_formulas} presents explicit formulas for the algebraic operations on slice functions. In particular, we extend formula~\eqref{introassociativeproduct} and its analog for quotients $f^{-\punto} \cdot g$ to slice functions $f,g$ over any associative $^*$-algebra. Both formulas were known only for slice regular functions in the quaternionic case (see~\cite[Theorem 3.4 and Proposition 5.32]{librospringer}).

In Section~\ref{sec:zeros}, we give quite an exhaustive description of the zero sets of slice and of slice regular functions. Several new, sometimes surprising, phenomena appear in the results and we are able to give examples of all of them in some of the aforementioned algebras. We also give complete characterizations of the zero sets of slice functions over $\rr_3$ and over $\s\oo$. At the end of the section, we focus on slice regular functions, generalizing the nice properties of the zeros known for slice regular functions over the quaternions  (see~\cite[Chapter 3]{librospringer} and references therein) and the octonions (see~\cite{rocky,ghiloni}), as well as for slice monogenic functions (see~\cite[\S2.5]{librodaniele2} and references). Indeed, we prove a significantly new version of the identity principle, which (in addition to its intrinsic interest) completes the study of the multiplicative inverse performed in Section~\ref{sec:reciprocal}. We also determine, for a slice regular function $f$, conditions that guarantee that its zero set consist of isolated points or isolated ``spheres'' of a special type. 

Finally, Section~\ref{sec:zerosofproducts} studies the effect of slice multiplication on the zero sets. It begins with a general result and some pathological examples. Then it presents a stronger characterization, illustrated by examples, under an appropriate compatibility hypothesis between the algebra under consideration and its $^*$-involution. This compatibility is automatically verified on associative $^*$-algebras, for which we provide a complete characterization. We point out that this applies, in particular, to all Clifford algebras $\rr_m$. At the end of the paper, we focus again on slice regular functions and we remove the extra hypotheses on the algebra. In this context, we are able to improve our general result linking the zeros of a product $f\cdot g$ to the zeros of $f$ and $g$ (cf.~\cite[\S3.2]{librospringer} and~\cite[\S3]{ghiloni} for the quaternionic and octonionic cases). We also determine sufficient conditions to exclude that a slice regular $f$ be a zero divisor in the algebra of slice functions.

\vspace{.3em}
\noindent{\bf Acknowledgements.} This work is supported by GNSAGA of INdAM and by the grants FIRB ``Differential Geometry and Geometric Function Theory" and PRIN ``Variet\`a reali e complesse: geometria, topologia e analisi armonica" of the Italian Ministry of Education.
We warmly thank the anonymous referee, whose precious suggestions have significantly improved our presentation.
\vspace{-.8cm}


\tableofcontents

\newpage


\section{Preliminaries}\label{sec:preliminaries}
\subsection{Alternative $^*$-algebras over $\rr$}\label{sec:algebras}

In the present section, we overview the definition of alternative $^*$-algebras over the real field $\rr$, some of their properties and some finite-dimensional examples. Furthermore, we make some assumptions that will hold throughout the paper.

\vspace{.5em}

{\bf Assumption (A).} \emph{Let $A$ be an \emph{alternative algebra} over $\rr$; that is, a real vector space endowed with a bilinear multiplicative operation such that the associator $(x,y,z)=(xy)z-x(yz)$ of three elements of $A$ is an alternating function.}

\vspace{.5em}

The alternating property is equivalent to requiring that $(x,x,y) = 0 = (y,x,x)$ for all $x,y \in A$, which is automatically true if $A$ is associative. Alternativity yields the so-called Moufang identities:
\begin{eqnarray}
(xax)y &=& x(a(xy))\label{moufang1}\\
y(xax) &=& ((yx)a)x\label{moufang2}\\
(xy)(ax) &=& x(ya) x.\label{moufang3}
\end{eqnarray}
It also implies \emph{power-associativity}: for all $x \in A$, we have $(x,x,x)=0$, so that the expression $x^n$ can be written unambiguously for all $n \in \nn$. Another important property of alternative algebras is described by the following result of E. Artin (cf.~\cite[Theorem 3.1]{schafer}): the subalgebra generated by any two elements of $A$ is associative.

We recall that $A$ is called a \emph{division algebra} if, for all $a,b \in A$ with $a \neq 0$, each of the equations $ax=b, xa=b$ admits a unique solution $x \in A$. This cannot be the case if $A$ admits \emph{zero divisors}; that is, nontrivial solutions $x$ to $ax=0$ or $xa=0$ with $a \neq 0$.

\vspace{.5em}

{\bf Assumption (B).} \emph{All algebras and subalgebras are assumed to be \emph{unitary}, that is, to have a multiplicative neutral element $1$; and $\rr$ is identified with the subalgebra generated by $1$.}

\vspace{.5em}

With this notation, $\R$ is always included in the \emph{nucleus} $\mathfrak{N}(A):=\{r \in A\, |\, (r,x,y)=0\ \forall\, x,y \in A\}$ and in the \emph{center} $\mathfrak{C}(A):=\{r \in \mathfrak{N}(A)\, |\, rx=xr\ \forall\, x\in A\}$ of the algebra $A$. We refer to~\cite{schafer} for further details on alternative algebras.

\vspace{.5em}

{\bf Assumption (C).} \emph{The algebra $A$ is assumed to be equipped with a \emph{$^*$-involution} $x \mapsto x^c$; that is, a (real) linear transformation of $A$ with the following properties: $(x^c)^c=x$ for every $x \in A$, $(xy)^c=y^cx^c$ for every $x,y \in A$ and $x^c=x$ for every $x \in \R$.}

\vspace{.5em}

For every $x \in A$, the \emph{trace} $t(x)$ of $x$ and the (squared) \emph{norm} $n(x)$ of $x$ are defined as
\begin{equation}\label{traceandnorm}
t(x):=x+x^c \quad \text{and} \quad n(x):=xx^c.
\end{equation}

Let us take a quick look at some examples. More details about them can be found in~\cite{ebbinghaus}.

\begin{examples}
The only examples of finite-dimensional alternative division algebras over $\rr$ are: the real field $\rr$, the complex field $\cc$, the skew field of quaternions $\hh$, and the algebra $\oo$ of octonions. On all such algebras $A$, conjugation is defined to act as $(r+v)^c=r-v$ for all $r \in \rr$ and all $v$ in the Euclidean orthogonal complement of $\rr$ in $A$. The norm $n(x)$ turns out to coincide with the squared Euclidean norm $\Vert x\Vert^2$. Hence, for all $x\neq0$, $n(x)$ is a nonzero real number that coincides with $n(x^c)$ and allows the explicit computation of $x^{-1}$ as $x^{-1} = n(x)^{-1} x^c = x^c\, n(x)^{-1}$. 
\end{examples}

For more general real alternative $^*$-algebras, things are not as simple: they may contain zero divisors and even nonzero elements $x$ with $n(x)=0$. In the latter case, the algebra is called \emph{singular}. We now recall the basic features of a well-known class of associative algebras. For a complete treatment, see~\cite{GHS}.

\begin{examples}
The Clifford algebra $C\ell_{p,q}$, sometimes denoted also by $\rr_{p,q}$, is the associative algebra that can be constructed by taking the space $\rr^{2^{n}}$ with $n=p+q$ and with the following conventions:
\begin{itemize}
\item $1, e_1,\ldots,e_n, e_{12}, \ldots, e_{n-1,n}, e_{123},\ldots,e_{1\ldots n}$ denotes the standard basis of $\rr^{2^{n}}$; 
\item $1$ is defined to be the neutral element;
\item $e_i^2 := 1$ for all $i \in \{1,\ldots,p\}$ and $e_i^2 := -1$ for all $i \in \{p+1,\ldots,n\}$;
\item for all $i_1,\ldots,i_s \in \{1,\ldots,n\}$ with $i_1<\ldots<i_s$, the product $e_{i_1}\ldots e_{i_s}$ is defined to be $e_{i_1\ldots i_s}$
\item $e_ie_j = -e_j e_i$ for all distinct $i,j$.
\end{itemize}
The algebra $C\ell_{p,q}$ becomes a $^*$-algebra when endowed with Clifford conjugation $x\mapsto x^c$, defined to act on $e_{i_1\ldots i_s}$ as the identity $id$ if $s\equiv0,3 \mod 4$ and as $-id$ if $s\equiv1,2 \mod 4$. 

\noindent In the special case in which $p=0$, the following properties hold for $\rr_n := C\ell_{0,n}$:
\begin{itemize}
\item $\rr_0 = \rr, \rr_1=\cc$ and $\rr_2 = \hh$; in particular, all these algebras have $n(x) = \Vert x\Vert^2$ and they are nonsingular;
\item for all $n\geq3$, denoting by $\langle \cdot,\cdot \rangle$ the Euclidean scalar product in $\rr_n$, it holds
\begin{equation}\label{cliffordnorm}
n(x) = \sum_{s\, \equiv\, 0,3 \text{ $\mathrm{mod}$ }4} \langle x, e_{i_1\ldots i_s}x \rangle e_{i_1\ldots i_s} = \Vert x\Vert^2 + \langle x, e_{123}x \rangle e_{123} + \ldots
\end{equation}
so that $\rr_n$ is always nonsingular; on the other hand, it is not a division algebra since for all nonzero $q \in \rr_2 \subseteq \rr_n$, the numbers $q\pm qe_{123}$ are zero divisors.
\end{itemize}

\noindent Here are a few examples of singular Clifford algebras:
\begin{itemize}
\item in the algebra $\s\cc = C\ell_{1,0}$ of split-complex numbers, $x=x_0+x_1e_1$ has norm $n(x) = (x_0+x_1e_1)(x_0-x_1e_1) = x_0^2-x_1^2$ so that $n^{-1}(0)$ is the union of the lines $x_0=x_1$ and $x_0=-x_1$;

\item in the algebra $\s\hh= C\ell_{1,1}$ of split-quaternions $x=x_0+e_1x_1+e_2 x_2 + e_{12}x_{12}$, the set $n^{-1}(0)$ is defined by the equation $x_0^2+x_2^2 = x_1^2 +x_{12}^2$. $\s\hh$ is  isomorphic to $C\ell_{2,0}$.
\end{itemize}
\end{examples}

Recent work on polynomials over the split quaternions includes~\cite{janovskaopfer}. Let us give another example of associative singular algebra, which is the object of current research for its applications to robotics. (See, e.g.,~\cite{hegedus} and references therein; we will include further information in Examples~\ref{ex:lowerdimensional}).

\begin{example}\label{dq}
The algebra of dual quaternions, denoted by $\D\hh$, can be defined as $\hh+\epsilon\hh$ where (for all $p,q \in \hh$) $(\epsilon p)(\epsilon q) = 0, p(\epsilon q) = \epsilon (p q) = (\epsilon p) q$. In particular, $\epsilon$ commutes with every element of $\D\hh$ and $\epsilon^2=0$.
Setting $(p+\epsilon q)^c=p^c+\epsilon q^c$ turns $\D\hh$ into a $^*$-algebra where $n(p+\epsilon q) = n(p)+\epsilon t(pq^c)$. Thus the set $n^{-1}(0)$ consists of the elements $\epsilon q$ with $q\in\hh$.
By analogy, we will denote the commutative $^*$-subalgebras $\rr+\epsilon \rr$ and $\cc+\epsilon \cc$ as $\D\rr$ and $\D\cc$, respectively.
\end{example}

Finally, let us consider an example of nonassociative singular algebra, which is also used to describe specific motions (see~\cite{baezhuerta}):

\begin{example}\label{so}
The real algebra of split-octonions $\s\oo$ can be constructed as $\hh+l\hh$ where (for all $p,q \in \hh$) $(lp)(lq) = q p^c, p(lq) = l(p^c q), (lp) q = l(qp)$. Setting $(p+lq)^c=p^c-lq$ turns $\s\oo$ into a $^*$-algebra where $n(p+lq) = (p+lq)(p^c-lq)=n(p)-n(q)+l (-p^cq+p^cq) = n(p)-n(q)$. Thus the set $n^{-1}(0)$ consists of all $p+lq$ with $p,q \in \hh$ having $\Vert p \Vert^2 = n(p) = n(q) = \Vert q \Vert^2$.
\end{example}

Going back to general alternative algebras, the following properties will turn out to be useful in the sequel.

\begin{lemma}\label{nonassociative}
Within an alternative algebra over $\rr$ (not necessarily of finite dimension), for all elements $x,y$:
\begin{enumerate}
\item if $x$ is invertible then $(x^{-1},x,y)=0$;
\item if $x,y$ are invertible then so is the product $xy$ and $(xy)^{-1} = y^{-1}x^{-1}$;
\item if the product $xy$ is invertible then $(x,y,(xy)^{-1}) = 0$; $y(xy)^{-1}$ is a right inverse for $x$; and $(xy)^{-1}x$ is a left inverse for $y$;
\item if the products $xy,yx$ are both invertible then $x,y$ are invertible as well and $x^{-1} = y(xy)^{-1} = (yx)^{-1}y, y^{-1}=(xy)^{-1}x = x (yx)^{-1}$.
\end{enumerate}
\end{lemma}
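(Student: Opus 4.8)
The whole lemma can be made to rest on a single engine, which I would prove first and which I expect to be the main obstacle: \emph{for an invertible $a$, the multiplication operators $L_a\colon t\mapsto at$ and $R_a\colon t\mapsto ta$ are bijective, with inverses $L_{a^{-1}}$ and $R_{a^{-1}}$.} The difficulty is exactly that invertibility of $a$ does not by itself grant the cancellation one has for free in the associative case; recovering that cancellation is the content of part~(1). To obtain it without circularity, I would read the Moufang identity~\eqref{moufang1} at the operator level as $L_{xax}=L_xL_aL_x$, and apply it with $a=x^{-2}$. Since $x$ and $x^{-1}$ generate an associative (indeed commutative) subalgebra by Artin's theorem, we have $x\,x^{-2}\,x=1$, so $L_xL_{x^{-2}}L_x=L_1=\mathrm{id}$; reading this as $L_x(L_{x^{-2}}L_x)=\mathrm{id}=(L_xL_{x^{-2}})L_x$ exhibits both a right and a left inverse for $L_x$, so $L_x$ is bijective. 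Feeding this into the instance $L_xL_{x^{-1}}L_x=L_x$ of~\eqref{moufang1} (using $x\,x^{-1}\,x=x$) and cancelling the now-invertible $L_x$ gives $L_{x^{-1}}L_x=L_xL_{x^{-1}}=\mathrm{id}$; the symmetric computation with~\eqref{moufang2} handles $R_x$. Part~(1) is then immediate: $(x^{-1},x,y)=(x^{-1}x)y-x^{-1}(xy)=y-L_{x^{-1}}L_x(y)=0$.

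For part~(2) I would compute $(xy)(y^{-1}x^{-1})$ via the middle Moufang identity~\eqref{moufang3}. Writing the second factor as $cx$ with $c:=(y^{-1}x^{-1})x^{-1}$ (legitimate since $R_{x^{-1}}=R_x^{-1}$) and using the alternating property $(y^{-1},x^{-1},x^{-1})=0$ to rewrite $c=y^{-1}x^{-2}$, the inverse property from part~(1) gives $yc=y(y^{-1}x^{-2})=x^{-2}$, whence~\eqref{moufang3} yields $(xy)(y^{-1}x^{-1})=x(yc)x=x\,x^{-2}\,x=1$. Applying the same computation to the invertible pair $y^{-1},x^{-1}$ produces $(y^{-1}x^{-1})(xy)=1$, so $xy$ is invertible with inverse $y^{-1}x^{-1}$.

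Part~(3) I would first reduce to the single assertion $(x,y,z)=0$, where $z:=(xy)^{-1}$: indeed $(x,y,z)=(xy)z-x(yz)=1-x(yz)$ and, by the alternating property, $(x,y,z)=(z,x,y)=(zx)y-1$, so the vanishing of the associator is equivalent both to $x(yz)=1$ (i.e.\ $y(xy)^{-1}$ is a right inverse of $x$) and to $(zx)y=1$ (i.e.\ $(xy)^{-1}x$ is a left inverse of $y$). To prove it, set $s:=x(yz)$ and use~\eqref{moufang3} twice. From $(xy)(zx)=x(yz)x$, the left inverse property $(xy)\big((xy)^{-1}x\big)=x$ and flexibility give $sx=x$; from $(yz)(xy)=y(zx)y$, the right inverse property $(yz)(xy)=y$ together with flexibility and $(zx)y=z(xy)+(z,x,y)=1+(x,y,z)$ give $ys=y$. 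Then $(y,s,x)=(ys)x-y(sx)=yx-yx=0$, so by cyclic invariance $(s,x,y)=0$ as well; hence $s(xy)=(sx)y=xy$, and cancelling the invertible factor $xy$ by the injectivity of $R_{xy}$ from part~(1) forces $s=1$, i.e.\ $(x,y,z)=0$.

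Finally, part~(4) follows by applying part~(3) to both invertible products: from $xy$ we learn that $x$ has the right inverse $y(xy)^{-1}$ and $y$ the left inverse $(xy)^{-1}x$, while from $yx$ we learn that $x$ has the left inverse $(yx)^{-1}y$ and $y$ the right inverse $x(yx)^{-1}$. Thus $x$ (and symmetrically $y$) has a one-sided inverse on each side; invoking the standard fact that in a unital alternative algebra a left inverse and a right inverse of the same element must coincide, we conclude that $x$ and $y$ are invertible, and the displayed equalities $x^{-1}=y(xy)^{-1}=(yx)^{-1}y$ and $y^{-1}=(xy)^{-1}x=x(yx)^{-1}$ are then forced by the uniqueness of the two-sided inverse. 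The conceptual heart of the argument is part~(1): once the bijectivity of $L_x,R_x$ for invertible $x$ is secured non-circularly, parts~(2)--(4) are clean Moufang computations combined with the alternating property.
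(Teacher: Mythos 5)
Your proof is sound through part (3), and it is actually more self-contained than the paper's in two respects. For part (1) the paper simply cites Schafer, whereas you derive the bijectivity of $L_x$ and $R_x$ from the operator forms $L_{xax}=L_xL_aL_x$ and $R_{xax}=R_xR_aR_x$ of \eqref{moufang1}--\eqref{moufang2} applied to $a=x^{-2}$; this is a clean, non-circular route, and the subsequent cancellation in $L_xL_{x^{-1}}L_x=L_x$ is correct. Part (2) matches the paper in spirit (a Moufang identity plus part (1)), though you use \eqref{moufang3} where the paper uses \eqref{moufang2}. Your part (3) is genuinely different: the paper starts from $x=(xy)^{-1}(xyx)$ and does one computation with \eqref{moufang3}, while you run a two-sided sandwich argument ($sx=x$ and $ys=y$ for $s=x(yz)$, hence $(s,x,y)=0$ and $s=1$ after cancelling $R_{xy}$); it is longer, but every step -- including the cyclic invariance of the associator and the identity $(zx)y=2-s$ -- checks out.

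The one place where you lean on something you have not established is part (4): you invoke ``the standard fact that in a unital alternative algebra a left inverse and a right inverse of the same element must coincide.'' That fact is true, but it is not formal -- the associative one-liner $b=b(ac)=(ba)c=c$ is precisely an associator that need not vanish here -- and it has essentially the same depth as the lemma you are proving, so it should not be cited without proof. It does follow in a few lines from your own machinery: if $ca=1=ab$, then \eqref{moufang3} gives $a(bc)a=(ab)(ca)=1$, whence $L_aL_{bc}L_a=\mathrm{id}=R_aR_{bc}R_a$, so $L_a$ and $R_a$ are bijective with $L_a^{-1}=L_aL_{bc}=L_{bc}L_a$ and $R_a^{-1}=R_aR_{bc}=R_{bc}R_a$; evaluating at $1$ and using $L_a(b)=1=R_a(c)$ yields $b=L_a^{-1}(1)=a(bc)=(bc)a=R_a^{-1}(1)=c$. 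Adding those lines closes the gap. For comparison, the paper avoids the general statement and instead argues directly that the two specific one-sided inverses $y(xy)^{-1}$ and $(yx)^{-1}y$ coincide, via the flexible identity $(yx)y=y(xy)$.
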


\begin{proof} The proofs are standard manipulations in nonassociative algebra:
\begin{enumerate} 
\item See~\cite[page 38]{schafer}.
\item For invertible $x,y$, property {\it 1} implies $xy = x (yx^{-1})x$. Hence, 
\[(y^{-1}x^{-1})(xy) = (y^{-1}x^{-1})(x (yx^{-1})x) = (((y^{-1}x^{-1})x)(yx^{-1}))x = (y^{-1}(yx^{-1}))x =1,\] 
where the second equality follows from the second Moufang identity~\eqref{moufang2} and the third and fourth equalities follow, again, from property 1. The equality $(xy)(y^{-1}x^{-1})=1$ follows from $(y^{-1}x^{-1})(xy) =1$ by substituting $y^{-1}$ for $x = (x^{-1})^{-1}$ and $x^{-1}$ for $y = (y^{-1})^{-1}$.
\item If $xy$ is invertible then property 1 implies $x=(xy)^{-1} (xyx)$. Hence, 
\[x ( y (xy)^{-1} ) = ( (xy)^{-1} (xyx) ) ( y (xy)^{-1} ) = (xy)^{-1} (xyxy) (xy)^{-1} = (xy)^{-1} (xy) = 1,\]
where the second equality follows from the third Moufang identity~\eqref{moufang3} while the third equality follows, again, from property 1. This proves that $y(xy)^{-1}$ is a right inverse for $x$ or, equivalently, that $(x,y,(xy)^{-1}) = 0$. As a consequence, $((xy)^{-1}, x,y) = 0$ so that $((xy)^{-1}x)y =(xy)^{-1}(xy) =1$. Hence $(xy)^{-1}x$ is a left inverse for $y$.
\item If the products $xy,yx$ are both invertible then (by property 3) $y(xy)^{-1}$ is a right inverse for $x$ and $(yx)^{-1}y$ is a left inverse for $x$. It turns out that $y(xy)^{-1} = (yx)^{-1}y$ as a consequence of the alternating property: indeed, $(yx)y=y(xy)$. The same reasoning applies if we swap $x$ and $y$.\qedhere
\end{enumerate}
\end{proof}

The previous lemma immediately implies the next property.

\begin{proposition}\label{invertibles}
Within an alternative $^*$-algebra over $\rr$ (not necessarily of finite dimension), an element $x$ is invertible if, and only if, $x^c$ is. If this is the case, then $(x^c)^{-1} = (x^{-1})^c$. Furthermore, $x$ is invertible if, and only if, both $n(x)$ and $n(x^c)$ are. If this is the case, then:
\[
x^{-1} =x^c\, n(x)^{-1} = n(x^c)^{-1}\, x^c.
\]
Moreover, $(n(x))^{-1} = (x^{-1})^c\, x^{-1} = n((x^{-1})^c)$.
\end{proposition}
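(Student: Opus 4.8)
The plan is to derive the whole statement from the defining properties of the $^*$-involution together with Lemma~\ref{nonassociative}, after recording the key observation that $n(x)=xx^c$ and $n(x^c)=x^c(x^c)^c=x^cx$ are precisely the two products $xy$ and $yx$ appearing in Lemma~\ref{nonassociative} once we set $y=x^c$.

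First I would settle the equivalence between the invertibility of $x$ and that of $x^c$. Assuming $x$ invertible, I would apply the involution to the identities $xx^{-1}=1=x^{-1}x$; since $(\cdot)^c$ reverses products and fixes $1$, this yields $(x^{-1})^cx^c=1=x^c(x^{-1})^c$, so that $(x^{-1})^c$ is a two-sided inverse of $x^c$. Invoking the uniqueness of a two-sided inverse in an alternative algebra (which follows from item~(1) of Lemma~\ref{nonassociative}, for it forces any two inverses $y,z$ of the same element $x$ to satisfy $z=(yx)z=y(xz)=y$), I obtain $(x^c)^{-1}=(x^{-1})^c$. The reverse implication is then immediate by replacing $x$ with $x^c$ and using $(x^c)^c=x$.

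Next I would prove the characterization through the norms. If $x$ is invertible, then so is $x^c$ by the previous paragraph, and item~(2) of Lemma~\ref{nonassociative} shows that the products $n(x)=xx^c$ and $n(x^c)=x^cx$ are invertible as well. Conversely, if both $n(x)=xx^c$ and $n(x^c)=x^cx$ are invertible, then item~(4) of Lemma~\ref{nonassociative} (with $y=x^c$) gives at once the invertibility of $x$ together with the explicit expression $x^{-1}=x^c(xx^c)^{-1}=(x^cx)^{-1}x^c=x^c\,n(x)^{-1}=n(x^c)^{-1}\,x^c$, which is the asserted formula.

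Finally, the remaining identity would follow by computing $n(x)^{-1}=(xx^c)^{-1}=(x^c)^{-1}x^{-1}$ through item~(2) of Lemma~\ref{nonassociative}, substituting $(x^c)^{-1}=(x^{-1})^c$ from the first step, and observing that $n\big((x^{-1})^c\big)=(x^{-1})^c\big((x^{-1})^c\big)^c=(x^{-1})^cx^{-1}$ by $\big((x^{-1})^c\big)^c=x^{-1}$. I do not expect any serious difficulty in the manipulations themselves; the only points that demand care are the identification of $n(x)$ and $n(x^c)$ with the two products of Lemma~\ref{nonassociative}~(4) and the consistent tracking of left and right factors, since in the absence of commutativity and associativity the two candidate expressions for $x^{-1}$ must be kept formally distinct until the lemma guarantees their equality.
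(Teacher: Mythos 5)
Your proof is correct and follows exactly the route the paper intends: the paper gives no separate argument for this proposition, stating only that it is an immediate consequence of Lemma~\ref{nonassociative}, and your write-up supplies precisely those details (anti-multiplicativity of the involution for the first equivalence, and items~(2) and~(4) of the lemma with $y=x^c$ for the norm characterization and the explicit formulas).
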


A priori, if we only know that $n(x)$ is invertible then all we can say is that $x^c\, n(x)^{-1}$ is a right inverse for $x$ and $n(x)^{-1}\,x$ is a left inverse for $x^c$. 
In the special case when $n(x),n(x^c)$ both are nonzero real numbers, the previous proposition allows the explicit construction of the inverse of $x$. More in general, if $n(x),n(x^c)$ are in the center of the algebra $A$ then some nice properties hold. Let us set
\begin{eqnarray*}
N_A&:=&\{0\} \cup \big\{x \in A \, \big| \, n(x),n(x^c) \in \R^*\big\}\\
C_A&:=&\{0\} \cup \big\{x \in A \, \big| \, n(x),n(x^c) \text{ are invertible elements of the center of } A\big\}.
\end{eqnarray*}
These real cones are called the \emph{normal} and \emph{central cone} of $A$, respectively.

\begin{theorem}\label{centralproduct}
Let $A$ be an alternative $^*$-algebra over $\rr$ (not necessarily of finite dimension).
For each $x \in C_A\supseteq N_A$:
\begin{enumerate}
\item $n(x) = n(x^c)$;
\item if $x \neq 0$ then $x$ is invertible and $x^{-1} = n(x)^{-1}x^c$;
\item $(x,x^c,y) = 0$ for all $y \in A$;
\item $n(xy) = n(x) n(y) = n(y) n(x) = n(yx)$ for all $y \in C_A$.
\end{enumerate}
As a consequence, $C_A$ and $N_A$ are closed under multiplication and $C_A^* := C_A \setminus \{0\}, N_A^*: = N_A \setminus \{0\}$ are multiplicative loops.
\end{theorem}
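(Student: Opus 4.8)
The plan is to deduce the whole statement from Proposition~\ref{invertibles} and Lemma~\ref{nonassociative}, exploiting repeatedly that when $n(x)$ and $n(x^c)$ are central they lie in the nucleus $\mathfrak{N}(A)$ (so every associator involving them vanishes) and commute with all elements. Since $\R^*\subseteq\mathfrak{C}(A)$ we have $N_A\subseteq C_A$, so it suffices to argue for $x\in C_A$; the case $x=0$ being trivial, I assume throughout that the elements in play are nonzero, hence in $C_A^*$.

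First I would settle part \emph{2} together with part \emph{1}. If $x\in C_A^*$ then $n(x),n(x^c)$ are invertible, so Proposition~\ref{invertibles} gives that $x$ is invertible with $x^{-1}=x^c\,n(x)^{-1}=n(x^c)^{-1}x^c$. As $n(x)^{-1}$ is central, the first expression rewrites as $n(x)^{-1}x^c$, which is precisely the formula of part \emph{2}. Comparing the two expressions yields $n(x)^{-1}x^c=n(x^c)^{-1}x^c$; right-multiplying by $x$ and pulling the central coefficients through the (vanishing) associator gives $n(x)^{-1}n(x^c)=n(x^c)^{-1}n(x^c)=1$, that is, $n(x)=n(x^c)$, which is part \emph{1}. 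Note that invertibility must be established before this comparison, so the logical order is the reverse of the listed one.

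For part \emph{3} I would write $x^c=n(x)\,x^{-1}$ and use that a central factor passes out of an associator, obtaining $(x,x^c,y)=n(x)\,(x,x^{-1},y)$; the remaining associator vanishes by Lemma~\ref{nonassociative}(1) applied to the invertible element $x^{-1}$, whose inverse is $x$. For part \emph{4}, with $x,y\in C_A^*$ both invertible, Lemma~\ref{nonassociative}(2) gives that $xy$ is invertible with $(xy)^{-1}=y^{-1}x^{-1}$, while $(xy)^c=y^cx^c$. Substituting $x^c=n(x)x^{-1}$ and $y^c=n(y)y^{-1}$, collecting the central factors $n(x),n(y)$ out front, and recognizing $y^{-1}x^{-1}=(xy)^{-1}$, the product $n(xy)=(xy)(xy)^c$ collapses to $n(x)n(y)\,(xy)(xy)^{-1}=n(x)n(y)$. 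The identical computation with $x$ and $y$ interchanged gives $n(yx)=n(y)n(x)$, and since central elements commute all four quantities coincide.

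Finally, for the consequence I would first note that $n(x^c)=x^cx$ and $n((x^c)^c)=n(x)$ show $x^c\in C_A\iff x\in C_A$; closure then follows from part \emph{4}, since for $x,y\in C_A^*$ the element $n(xy)=n(x)n(y)$ is invertible central and likewise $n((xy)^c)=n(y^cx^c)=n(y^c)n(x^c)$ is invertible central, so $xy\in C_A^*$ (the same argument over $\R^*$ handles $N_A$). That $C_A^*$ and $N_A^*$ are loops follows by checking that $a\,x=b$ and $y\,a=b$ have unique solutions $a^{-1}b$ and $ba^{-1}$ in the set: these lie in the set by closure together with $x^{-1}\in C_A^*$ (a short computation yields $n(x^{-1})=n(x)^{-1}$), while existence and uniqueness of solutions come from the cancellation identities in Lemma~\ref{nonassociative}(1). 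The only genuine obstacle is the bookkeeping forced by nonassociativity: each rearrangement must be justified by moving \emph{only} nuclear/central elements across products and through associators, so care is needed to invoke centrality at every step rather than associativity.
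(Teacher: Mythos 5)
Your proof is correct, and for the central multiplicativity statement (part \emph{4}) it takes a genuinely different route from the paper. The paper proves $n(x)\,n(yx)=n(x)^2 n(y)$ by a chain of Moufang-identity manipulations (identities~\eqref{moufang1} and~\eqref{moufang3}) combined with the vanishing associator of part \emph{3}, and then cancels the invertible factor $n(x)$. You instead substitute $x^c=n(x)x^{-1}$ and $y^c=n(y)y^{-1}$ into $(xy)^c=y^cx^c$, use Lemma~\ref{nonassociative}(2) to recognize $y^{-1}x^{-1}=(xy)^{-1}$, and collapse $n(xy)=(xy)(xy)^c$ to $n(x)n(y)$ directly; this avoids the Moufang identities entirely and does not even need part \emph{3} as an ingredient, at the cost of leaning repeatedly on the fact that an invertible central element has a central (nuclear) inverse so that it can be moved across products and out of associators --- a fact the paper also uses implicitly, e.g.\ when it writes $(x,x^c n(x)^{-1},y)=n(x)^{-1}(x,x^c,y)$. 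Parts \emph{1}--\emph{3} and the closure/loop conclusion follow essentially the paper's argument, with only a cosmetic difference in part \emph{1}: you cancel $x$ on the right of $n(x)^{-1}x^c=n(x^c)^{-1}x^c$, whereas the paper cancels $x$ on the left of $xn(x)=xx^cx=xn(x^c)$. Your explicit verification of the loop axioms via the cancellation identities of Lemma~\ref{nonassociative}(1) is more detailed than the paper, which leaves that step to the reader.
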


\begin{proof}
All properties listed are obvious when $x=0$, so we restrict to the case when $n(x),n(x^c)$ are invertible elements of $A$ belonging to its center.
\begin{enumerate}
\item[1-2.] According to Proposition~\ref{invertibles}, the element $x$ is invertible, too. Since $xn(x) = n(x)x= x x^c x = x n(x^c)$, we conclude that $n(x) = n(x^c)$. The formula for $x^{-1}$ follows, again, from Proposition~\ref{invertibles}.
\item[3.] By Lemma~\ref{nonassociative} we have
\[0=(x,x^{-1},y) = (x, x^cn(x)^{-1},y)= n(x)^{-1} (x,x^c,y),
\]
whence $0=(x,x^c,y)$.
\item[4.] By the third Moufang identity~\eqref{moufang3}, $n(x) y x= x(x^c y) x$ so that
\[
n(x) n(yx) =n(x) (yx)(yx)^c = (x(x^c y) x) (yx)^c.
\]
Hence, by the first Moufang identity~\eqref{moufang1} and by property {\it 3},
\begin{align*}
n(x) n(yx) &= x((x^c y) (x (yx)^c)) = x((x^c y) (x (x^c y^c))) \\
&= x((x^c y) (n(x) y^c))  = n(x)  x((x^c y) y^c).
\end{align*}
Since $y \in C_A$, property {\it 3} ensures that  $(x^c y) y^c = x^c n(y)$ and hence
\[
n(x) n(yx) =  n(x)  x(x^c n(y)) =n(x)^2 n(y),
\]
which implies $n(yx) = n(x) n(y) = n(y) n(x)$ by the invertibility of $n(x)$.
\end{enumerate}
Finally, owing to property {\it 4}, if $x,y \in C_A^*$, then the numbers $n(xy)=n(x)n(y)$ and $n((xy)^c) = n(y^cx^c) = n(y^c)n(x^c)$ are still invertible elements of the center of $A$. Moreover, by Proposition~\ref{invertibles} and property {\it 2}, we have that $n(x^{-1})=n((x^{-1})^c)=n(x)^{-1}$. It follows that $xy,x^{-1} \in C_A^*$. Similarly, if $x,y\in N_A^*$, then $n(xy)=n((xy)^c),n(x^{-1})=n((x^{-1})^c) \in \rr^*$ and hence $xy,x^{-1} \in N_A^*$. Clearly, $N_A,C_A$ are both closed under left or right multiplication by $0$, which concludes the proof.
\end{proof}

Before proceeding, we make an additional remark for the associative case:

\begin{remark}\label{associativecasenorm}
If $A$ is associative then, for all $x,y\in A$, the equality $n(xy) = x y y^c x^c = x n(y) x^c$ holds. If $n(y)$ belongs to the center of $A$, then
\[
n(xy) = n(x) n(y) = n(y) n(x).
\]
\end{remark}

The associativity hypothesis can actually be weakened to the notion we are about to introduce.

\begin{definition}
We will call the alternative $^*$-algebra $A$ \emph{compatible} if the trace function $t$ defined by formula~\eqref{traceandnorm} has its values in the nucleus of $A$. 
\end{definition}

We point out that, according to the previous definition, every associative $^*$-algebra is compatible. Furthermore, some useful properties hold on compatible algebras, including a $^*$-analog of Artin's theorem:

\begin{theorem}
If $A$ is compatible, then:
\begin{enumerate}
\item the norm function $n$, as well, takes values in the nucleus of $A$;
\item for all $x,y\in A$, $(x,x^c,y)=0$.
\end{enumerate}
As a consequence, the $^*$-subalgebra generated by any two elements $x,y \in A$ is associative.
\end{theorem}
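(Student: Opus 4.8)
The plan is to dispose of the two numbered claims almost for free and then to bootstrap them into the associativity statement.

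For claim \emph{1} I would start from the fact that the norm is always self-adjoint, with no hypothesis on $A$: since $(ab)^c=b^ca^c$ and $(x^c)^c=x$, one has $n(x)^c=(xx^c)^c=(x^c)^c x^c = x x^c = n(x)$. Consequently $t(n(x))=n(x)+n(x)^c=2\,n(x)$. Now compatibility enters: by definition $t$ takes its values in $\mathfrak{N}(A)$, so $2\,n(x)=t(n(x))\in\mathfrak{N}(A)$, and since $A$ is a real algebra we divide by $2$ to get $n(x)\in\mathfrak{N}(A)$. For claim \emph{2} I would write $x^c=t(x)-x$ and expand by trilinearity: $(x,x^c,y)=(x,t(x),y)-(x,x,y)$. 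The second associator vanishes because the associator is alternating (two equal arguments). For the first, compatibility gives $t(x)\in\mathfrak{N}(A)$, and a nuclear element in any slot annihilates the associator: indeed $(x,t(x),y)=-(t(x),x,y)=0$ by the alternating property together with the defining property of the nucleus. Hence $(x,x^c,y)=0$.

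For the final assertion the first move is to rewrite the $^*$-subalgebra generated by $x,y$. It is the subalgebra generated by $\{x,y,x^c,y^c\}$; since $x^c=t(x)-x$ and $y^c=t(y)-y$ with $t(x),t(y)\in\mathfrak{N}(A)$ by compatibility, it coincides with the subalgebra $W$ generated by $x,y$ together with the two nuclear elements $t(x),t(y)$. Artin's theorem gives that $\langle x,y\rangle$ is associative, so what remains is to show that adjoining nuclear elements to an associative subalgebra preserves associativity. The engine is a set of pull-out identities for a nuclear $c$: from the formal Teichm\"uller identity $a(b,c',d)-(ab,c',d)+(a,bc',d)-(a,b,c'd)+(a,b,c')d=0$ (valid in any nonassociative algebra) together with the vanishing of every associator having $c$ in a single slot, one derives $(ca,b,d)=(ac,b,d)=c\,(a,b,d)$, and by the alternating property the analogous statements for $c$ inserted into the second or third argument. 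Using these, any associator of monomials in $x,y,t(x),t(y)$ collapses to $c^m$ times an associator of the corresponding monomials in $x,y$ alone, which vanishes because $\langle x,y\rangle$ is associative; hence $W$ is associative.

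The routine parts are claims \emph{1} and \emph{2}; the part that needs genuine care is the last reduction. The main obstacle is bookkeeping: a nuclear generator may sit nested deep inside a monomial rather than as an outer factor, so extracting it cleanly requires an induction on the number of nuclear occurrences, or equivalently a normal-form argument writing each element of $W$ as a linear combination of alternating words $s_0\,c\,s_1\,c\cdots c\,s_k$ with $s_i\in\langle x,y\rangle$. This is precisely a $^*$-flavoured rerun of the induction underlying Artin's theorem, and I would either carry it out by that induction or invoke the general fact that, in an alternative algebra, the subalgebra generated by two elements and any set of nuclear elements is associative.
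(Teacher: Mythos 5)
Your proofs of claims \emph{1} and \emph{2} coincide with the paper's: the identity $t(n(x))=2n(x)$ for claim \emph{1}, and the expansion $(x,x^c,y)=(x,t(x)-x,y)=(x,t(x),y)-(x,x,y)=0$ for claim \emph{2}. For the final assertion, however, you take a genuinely different route. The paper keeps the generators $\{x,x^c,y,y^c\}$ and feeds claim \emph{2} directly into the Bruck--Kleinfeld theorem (Theorem I.2 of their reference): since $(u,u',z)=0$ for all $z$ whenever $u,u'\in\{x,x^c\}$ or $u,u'\in\{y,y^c\}$, that theorem immediately places $\{x,x^c,y,y^c\}$ inside an associative subalgebra. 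You instead trade the conjugates for the nuclear traces, reducing to the subalgebra generated by $x,y,t(x),t(y)$, and then argue that adjoining nuclear generators to a $2$-generated (hence, by Artin, associative) subalgebra preserves associativity. This is a valid and arguably more conceptual reduction -- it isolates exactly where compatibility is used -- but note two things. First, your closing ``general fact'' (two elements plus any set of nuclear elements generate an associative subalgebra) is itself an instance of the same Bruck--Kleinfeld theorem, applied to $B=\{x,t(x),t(y)\}$ and $C=\{y\}$, so the two routes ultimately lean on the same external input; if you prefer the hands-on induction instead, be aware that your intermediate claim that an associator of monomials ``collapses to $c^m$ times an associator of monomials in $x,y$ alone'' is not literally correct, since $t(x)$ and $t(y)$ need not commute with $x$ and $y$: the pulled-out nuclear factors remain interleaved, and one must argue by induction on the number of nuclear occurrences (using Teichm\"uller to break up products whose nuclear factor is nested) rather than by collecting them into a single prefactor. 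Second, your pull-out identities $(c'a,b,d)=(ac',b,d)=c'(a,b,d)$ are correct, but the second equality genuinely needs the alternating property (via $(a,c'b,d)=-(c'b,a,d)=-c'(b,a,d)=c'(a,b,d)$), not just Teichm\"uller; it is worth making that dependence explicit.
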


\begin{proof}
Point {\it 1} is a consequence of the equality $t(n(x))=2n(x)$, valid for all $x \in A$.
Point {\it 2} can be proven by direct computation. Indeed, for all $x,y\in A$, 
\[(x,x^c,y)=(x,x^c,y)-(x,t(x),y)=-(x,x,y)=0.\]
As for the final statement, it is obtained by applying a general result,~\cite[Theorem I.2]{bruckkleinfeld}, to the subsets $\{x,x^c\},\{x,x^c\},\{y,y^c\}$ of our alternative algebra $A$: since (by the alternativity hypothesis and by point {\it 2}) we have
\begin{align*}
0&=(x,x,z)=(x,x^c,z)=(x^c,x,z)=(x^c,x^c,z)\\
0&=(y,y,z)=(y,y^c,z)=(y^c,y,z)=(y^c,y^c,z)
\end{align*}
for all $z \in A$, the cited theorem guarantees the set $\{x,x^c,y,y^c\}$ is contained in an associative subalgebra of $A$.
\end{proof}

As a consequence, Remark~\ref{associativecasenorm} applies verbatim to every compatible $^*$-algebra $A$:

\begin{proposition}\label{compatiblecasenorm}
If $A$ is compatible then, for all $x,y\in A$, the equality $n(xy) = x n(y) x^c$ holds. If $n(y)$ belongs to the center of $A$, then
\[
n(xy) = n(x) n(y) = n(y) n(x).
\]
\end{proposition}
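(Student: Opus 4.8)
The plan is to reduce both formulas to the associative computation recorded in Remark~\ref{associativecasenorm}, invoking the $^*$-analog of Artin's theorem just established: since $A$ is compatible, the $^*$-subalgebra $B$ generated by $x$ and $y$ is associative. This $B$ contains $x,y,x^c,y^c$, and hence every product formed from them, in particular $xy$, its conjugate $(xy)^c=y^cx^c$, and the norms $n(x)=xx^c$ and $n(y)=yy^c$. Consequently all the parentheses appearing in the expressions below may be rearranged at will, as long as every factor involved lies in $B$.

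For the first identity I would simply compute inside $B$:
\[
n(xy)=(xy)(xy)^c=(xy)(y^cx^c)=x(yy^c)x^c=x\,n(y)\,x^c.
\]

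For the second identity, assume in addition that $n(y)$ lies in the center $\mathfrak{C}(A)$ of the whole algebra. Then $n(y)$ commutes with every element of $A$ and, because $\mathfrak{C}(A)\subseteq\mathfrak{N}(A)$, it also associates freely with any pair of elements of $A$. Starting from the expression $x\,n(y)\,x^c$ obtained above, I would slide $n(y)$ to the right using centrality and reassociate using nuclearity, $x(n(y)x^c)=x(x^cn(y))=(xx^c)n(y)=n(x)n(y)$, and symmetrically slide $n(y)$ to the left, $(x\,n(y))x^c=(n(y)x)x^c=n(y)(xx^c)=n(y)n(x)$. Combining the two chains yields $n(xy)=n(x)n(y)=n(y)n(x)$.

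I expect no serious obstacle here: essentially all the difficulty has already been absorbed into the preceding $^*$-Artin theorem, which supplies associativity on the subalgebra generated by $x$ and $y$ and thereby justifies the first formula immediately. The only point requiring a little care is that the centrality of $n(y)$ is a property relative to all of $A$ and not merely to $B$; accordingly, in the second formula the commuting and reassociating steps must be justified by the global memberships $n(y)\in\mathfrak{C}(A)\subseteq\mathfrak{N}(A)$, rather than by associativity within $B$ alone.
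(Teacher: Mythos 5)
Your argument is correct. It is, in substance, the one-line justification the paper itself gives immediately before the proposition (``Remark~\ref{associativecasenorm} applies verbatim to every compatible $^*$-algebra''): you invoke the $^*$-analog of Artin's theorem to place $x,y,x^c,y^c$ in an associative $^*$-subalgebra $B$, compute $n(xy)=x\,n(y)\,x^c$ there, and then handle the central case by commuting and reassociating $n(y)$ globally. Your closing caveat is well taken and correctly resolved: centrality and nuclearity of $n(y)$ are properties relative to all of $A$, and since the paper's center is by definition contained in the nucleus (and, in an alternative algebra, the alternating associator makes one-sided nuclearity equivalent to full nuclearity), the reassociations $x(x^c n(y))=(xx^c)n(y)$ and $(n(y)x)x^c=n(y)(xx^c)$ are legitimate --- though in fact they already follow from associativity of $B$, so only the commutation steps genuinely need the global hypothesis. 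The paper's \emph{displayed} proof takes a different, self-contained route: it writes $x^c=t(x)-x$, uses compatibility only through the fact that $t(x)$ lies in the nucleus (so it can be pulled out of products), and applies the third Moufang identity~\eqref{moufang3} to get $(xy)(y^cx)=x\,n(y)\,x$, assembling $n(xy)=x\,n(y)\,t(x)-x\,n(y)\,x=x\,n(y)\,x^c$ directly. Your version is shorter but rests on the full strength of the $^*$-Artin theorem (hence on the cited Bruck--Kleinfeld result); the paper's computation buys independence from that machinery at the cost of a slightly longer manipulation.
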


In order to keep our presentation as self-contained as possible, we also provide a direct proof of the same result.
\begin{proof}
The compatibility of $A$ and the third Moufang identity~\eqref{moufang3} imply that
\begin{align*}
n(xy)&=(xy)(y^cx^c)=(xy)(y^c(t(x)-x))=(xy)(y^ct(x)-y^cx)\\
     &=((xy)y^c)t(x)-(xy)(y^cx)=xn(y)t(x)-xn(y)x\\
     &=xn(y)x^c
\end{align*}
for all $x,y \in A$. As a consequence, $n(xy)=n(x)n(y)=n(y)n(x)$ if $n(y)$ belongs to the center of $A$.
\end{proof}

All the examples of alternative $^*$-algebras over $\rr$ that we have listed are compatible. We can construct here a non compatible example.

\begin{example}\label{soincompatible}
The algebra $\s\oo=\hh+ l\hh$ of split-octonions (cf.~Example~\ref{so}) admits also the $^*$-involution $(p+lq)^c=p^c+lq$. In particular, $t(l)=2l$ does not belong to the nucleus of the algebra. In the sequel, unless otherwise stated, we will always assume that $\s\oo$ is equipped with the  $^*$-algebra structure defined in Example~\ref{so}.
\end{example}

Going back to the general setting, if we want to deal with elements of $A$ that split into (uniquely determined) real and imaginary parts just as in the complex case, we may restrict to the \emph{quadratic cone} of $A$,
\[
Q_A:=\R \cup \big\{x \in A \, \big| \, t(x) \in \R ,n(x) \in \R, 4n(x)>t(x)^2\big\}.
\]
Indeed, it turns out that if we set
\begin{equation} \label{eq:s_A}
\s_A=\{x \in A \, | \, t(x)=0, n(x)=1\},
\end{equation}
then the unitary subalgebra generated by any $J \in \s_A$, i.e., $\cc_J=\langle 1,J \rangle$, is isomorphic to the complex field and
\begin{equation} \label{eq:slice}
\textstyle
\text{$Q_A=\bigcup_{J \in \s_A}\cc_J$.}
\end{equation}
Moreover, $\cc_I \cap \cc_J=\R$ for every $I,J \in \s_A$ with $I \neq \pm J$. In other words, every element $x$ of $Q_A \setminus \R$ can be written as follows: $x=\alpha+\beta J$, where $\alpha \in \R$ is uniquely determined by $x$, while $\beta \in \R$ and $J \in \s_A$ are uniquely determined by $x$, but only up to sign. If $x \in \R$, then $\alpha=x$, $\beta=0$ and $J$ can be chosen arbitrarily in $\s_A$. Therefore, it makes sense to define the \emph{real part} $\re(x)$ and the \emph{imaginary part} $\im(x)$ by setting $\re(x):=t(x)/2=(x+x^c)/2$ and $\im(x):=x-\re(x)=(x-x^c)/2$. Finally, for all $J \in \s_A$, we have that $J^c=-J$ so that the isomorphism between $\cc_J$ and $\cc$ is also a $^*$-algebra isomorphism. In particular, if $x=\alpha+\beta J$ for some $\alpha,\beta \in \rr$ and $J \in \s_A$, then $x^c=\alpha-\beta J$ and $n(x) = n(x^c)= \alpha^2+\beta^2$. Hence, for such an $x$ we may write $|x|:=\sqrt{n(x)}=\sqrt{\alpha^2+\beta^2}$ just as we would for a complex number. Moreover, $Q_A \subseteq N_A\subseteq C_A$.
We refer the reader to~\cite[\S 2]{perotti} for a proof of the preceding assertions.

For all finite-dimensional (alternative) division algebras $A$ over $\rr$, we have $Q_A =N_A =C_A=A$, but things are more complicated in general:

\begin{examples}\label{ex:lowerdimensional}
Let $\mathfrak{Z} = \mathfrak{Z}(A)$ denote the set of zero divisors in $A$. Then:

\vskip 8pt 
\begin{tabular}{|c|c|c|c|c|c|c|c|c|
}
\hline
$A$ & $\dim$ 
& {\rm nucleus}
& {\rm center} & $\mathfrak{Z}$& $C_A$ & $N_A$ & $Q_A$&$\s_A$ \\
\hline
$\rr $ & $1$
& $\rr$& $\rr$ & $\emptyset$ & $\rr$ & $\rr$ & $\rr$ &$\emptyset$ \\
$\cc$ & $2$
& $\cc$& $\cc$ & $\emptyset$ & $\cc$ & $\cc$ &  $\cc$ & $S^0$ \\
$\s\cc$ & $2$ 
& $\s\cc$& $\s\cc$ & $n^{-1}(0)^*$ & $\s\cc\setminus \mathfrak{Z}$ & $\s\cc\setminus \mathfrak{Z}$ &  $\rr$ & $\emptyset$ \\
$\D\rr$ & $2$ 
& $\D\rr$ & $\D\rr$ & $n^{-1}(0)^*$ & $\D\rr \setminus \mathfrak{Z}$ & $\rr$ & $\rr$ & $\emptyset$ \\
$\hh $ & $4$ 
& $\hh$ & $\rr$ & $\emptyset$ & $\hh$ & $\hh$ & $\hh$ &$S^2$ \\
$\s\hh$ & $4$ 
& $\s\hh$& $\rr$ & $n^{-1}(0) ^*$ & $\s\hh\setminus \mathfrak{Z}$ & $\s\hh\setminus \mathfrak{Z}$ &$\rr \cup U^4$ & $H^2$ \\
$\D\cc$ & $4$ 
& $\D\cc$ & $\D\cc$ & $n^{-1}(0)^*$ & $\D\cc \setminus \mathfrak{Z}$ & $\study^3\setminus \mathfrak{Z}$ & $\cc$ & $S^0$ \\
$\oo$ & $8$
& $\rr$ & $\rr$ & $\emptyset$  & $\oo$ & $\oo$ & $\oo$ &$S^6$ \\
$\s\oo$ & $8$
& $\rr$ & $\rr$ &  $n^{-1}(0)^*$ & $\s\oo \setminus \mathfrak{Z}$& $\s\oo \setminus \mathfrak{Z}$ & $\rr \cup U^8$ &$H^6$\\
$\D\hh$ & $8$ 
& $\D\hh$ & $\D\rr$ & $n^{-1}(0)^*$ & $\D\hh \setminus \mathfrak{Z}$ & $\study^7\setminus \mathfrak{Z}$ & $\mathscr{Q}^6\setminus \mathfrak{Z}$ & $\mathscr{T}^4$ \\
$\rr_3$ & $8$ 
& $\rr_3$ & $\Gamma^2$ & $\Delta^4$ & $\rr_3 \setminus \mathfrak{Z}$ & $E^7$ & $F^6$ & $G^4$ \\
\hline
\end{tabular}\vskip 8pt

\noindent In the scheme, we have used the following notations.
\begin{itemize}
\item $S^0,S^2,S^6$ are the unit spheres in the Euclidean subspaces $\im(\cc),\im(\hh),\im(\oo)$ respectively.
\item Among the split-quaternions $w=x_0+x_1e_1 + x_2e_2 + x_{12}e_{12}\in \s\hh$, the set $H^2$ is the (two-sheets) $2$-hyperboloid 
\[
x_0=0,\ x_2^2 = 1 + x_1^2 +x_{12}^2,
\]
which is asymptotic to the circular $2$-cone $x_2^2 = x_1^2 +x_{12}^2$ obtained by intersecting $n^{-1}(0)$ with the $3$-space $x_0=0$. The line $\rr$ added to the interior of the same $2$-cone forms the quadratic cone $\rr \cup U^4$, where $U^4$ is the open subset of $\s\hh$ defined by equation $x_2^2>x_1^2+x_{12}^2$.
\item Within the algebra $\D\cc$, of ``dual complex numbers'' $z+\epsilon w$ (with $z,w \in \cc$), the set $\study^3$ has equation $t(zw^c)=0$. In other words, it is the set of numbers $z+\epsilon w$ with mutually orthogonal $z,w$.
\item Among the split-octonions $p+lq\in \s\oo$, the set $H^6$ is defined by the equations
\[
t(p)=0,\ n(p) = 1 + n(q).
\]
while the quadratic cone is the open set $U^8$ of equation $n(Im(p))>n(q)$, completed with $\rr$.
\item Within the algebra $\rr_3$ of Clifford numbers $x_0+\sum_{1\leq l\leq3} x_l e_l + \sum_{1\leq h<k\leq3} x_{hk} e_{hk}+ x_{123}e_{123}$, the set $\Gamma^2$ is the plane $\rr+e_{123}\rr$ (isomorphic to $\s\cc$ as an algebra but not as a $^*$-algebra, since $e_{123}^c = e_{123}$). $\Delta^4$ is the disjoint union of the punctured $4$-spaces $(1+e_{123})\rr_2^*$ and $(1-e_{123})\rr_2^*$. $E^7$ is the real algebraic set 
\[
x_0x_{123}-x_1x_{23}+x_2x_{13}-x_3x_{12}=0.
\]
$F^6$ is the intersection between $E^7$ and the hyperplane $x_{123}=0$. Finally, $G^4$ is $E^7$ intersected with the unit sphere of $x_0=x_{123}=0$ (see~\cite{clifford} and~\cite[Example 1(3)]{perotti}).
\item Among the dual quaternions $p+\epsilon q\in \D\hh$, where $p=p_0+p_1i+p_2j+p_3k$, $q=q_0+q_1i+q_2j+q_3k$, the set $\study^7$ is defined by equation $t(pq^c)=0$, i.e., it is the {Study quadric}  \[p_0q_0+p_1q_1+p_2q_2+p_3q_3=0.\]
When the homogeneous set $\study^7\setminus \mathfrak{Z}$ with $\mathfrak{Z} = n^{-1}(0)^* = \epsilon \hh^*$ is considered in $\rr\mathbb{P}^7$, it gives a group isomorphic to the group $SE(3)$ of rigid motions of $\rr^3$: $x=p+\epsilon q\in \study^7\setminus \mathfrak{Z}$ corresponds to the isometry $\im(\hh)\ni v\mapsto pvp^{-1}+qp^{-1}$ of $\im(\hh)\simeq\rr^3$ (see e.g.~\cite{hegedus}). The set $\mathscr{Q}^6$ is the intersection of $\study^7$ with the hyperplane $q_0=0$. Both the normal cone $\study^7\setminus \mathfrak{Z}$ and the quadratic cone $\mathscr{Q}^6\setminus \mathfrak{Z}$ are not algebraic subsets of $\D\hh \simeq \rr^8$, but only semialgebraic.  Finally, $\mathscr{T}^4$ is the real algebraic set with equations
\[
p_0=q_0=0,\  p_1^2+p_2^2+p_3^2=1,\ p_1q_1+p_2q_2+p_3q_3=0.
\]
Hence, $\mathscr{T}^4$ can be seen as the tangent bundle over $S^2\subseteq \im(\hh)$.
\end{itemize}
\end{examples}

In all the aforementioned examples, $C_A$ is an open dense subset of $A$, namely $A \setminus \mathfrak{Z}(A)$. Furthermore, $A^* \setminus \mathfrak{Z}(A)$ coincides with the group of invertible elements in $A$, as it is the case in any finite-dimensional associative algebra:

\begin{proposition}\label{prop_farenick}
In a finite-dimensional associative algebra $A$, the only elements $x$ that are not invertible are $0$ and the zero divisors. In particular, such $x$ are neither right- nor left-invertible, nor are the products $xy, yx$ by any other $y \in A$ invertible.
\end{proposition}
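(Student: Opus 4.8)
The plan is to reduce everything to the elementary fact that an $\rr$-linear endomorphism of a finite-dimensional real vector space is bijective as soon as it is injective or surjective (rank--nullity). I would apply this to the \emph{left} and \emph{right multiplication} operators
\[
L_x\colon A\to A,\quad L_x(a)=xa,\qquad R_x\colon A\to A,\quad R_x(a)=ax,
\]
which are $\rr$-linear since the multiplication of $A$ is bilinear, and which act on the finite-dimensional real vector space $A$. The whole statement will then follow by translating invertibility and the zero-divisor condition into properties of $L_x$ and $R_x$, the associativity of $A$ being used to glue one-sided data together.

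First I would prove the characterization of the non-invertible elements. For the nontrivial implication, suppose $x\neq0$ is not a zero divisor, that is, $xa=0$ and $ax=0$ both force $a=0$. Then $L_x$ and $R_x$ are injective, hence bijective by rank--nullity; surjectivity of $L_x$ yields $y\in A$ with $xy=1$, while surjectivity of $R_x$ yields $z\in A$ with $zx=1$. Associativity then gives $z=z(xy)=(zx)y=y$, so $x$ is invertible. The converse is immediate: if $x$ is invertible then $x\neq0$, while $xa=0$ forces $a=x^{-1}(xa)=0$ and $ax=0$ forces $a=(ax)x^{-1}=0$, so $x$ is not a zero divisor. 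Hence the non-invertible elements are exactly $0$ and the zero divisors.

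For the ``in particular'' part, the key observation is that here a \emph{one-sided} inverse is already a two-sided inverse. Indeed, if $x$ has a right inverse, say $xy=1$, then $L_x$ is surjective (because $L_x(ya)=x(ya)=(xy)a=a$ for every $a$), hence bijective; from $L_x(yx)=x(yx)=(xy)x=x=L_x(1)$ and the injectivity of $L_x$ we obtain $yx=1$, so $x$ is invertible. The symmetric argument with $R_x$ shows that a left inverse likewise makes $x$ invertible. Consequently, if $x$ is \emph{not} invertible, it can be neither right- nor left-invertible.

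Finally, the statement about products follows by associativity from what has just been proved. If $xy$ were invertible for some $y$, then $x\bigl(y(xy)^{-1}\bigr)=(xy)(xy)^{-1}=1$ would exhibit a right inverse of $x$, forcing $x$ to be invertible; likewise, if $yx$ were invertible then $\bigl((yx)^{-1}y\bigr)x=(yx)^{-1}(yx)=1$ would make $x$ left-invertible, hence invertible. Thus no such product can be invertible when $x$ is not. I do not expect a genuine obstacle here: the only point requiring care is to pair ``injective $=$ surjective'' (from finite-dimensionality) consistently with associativity when passing from one-sided to two-sided inverses, and to keep track of the left/right distinction throughout.
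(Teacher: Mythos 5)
Your proof is correct. Note that the paper itself does not prove this proposition at all: it simply defers to the reference \cite[Theorem 2.24]{farenick}, so there is no internal argument to compare against. Your self-contained proof via the left and right regular representations $L_x$ and $R_x$ --- using rank--nullity to convert injectivity into surjectivity, and associativity to merge the one-sided inverses and to promote a one-sided inverse to a two-sided one --- is the standard argument for this fact and is exactly what a proof of the cited theorem amounts to. All three stages check out: the equivalence ``$x\neq 0$ not a zero divisor $\Leftrightarrow$ $x$ invertible'' (your reading of the paper's disjunctive definition of zero divisor, namely that non-zero-divisor means \emph{both} $L_x$ and $R_x$ are injective, is the right one); the promotion of a right (resp.\ left) inverse to a two-sided inverse via injectivity of $L_x$ (resp.\ $R_x$); and the reduction of the invertibility of $xy$ or $yx$ to one-sided invertibility of $x$. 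The only thing you might add for completeness is the trivial remark that $0$ itself is not invertible because $A\neq\{0\}$ under Assumption (B), but this is cosmetic.
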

For a proof, see~\cite[Theorem 2.24]{farenick}. To complete the panorama, let us consider the higher-dimensional Clifford algebras $\rr_m=C\ell_{0,m}$.

\begin{example}
The center of $\rr_m$ is $\rr$ for $m$ even and $\rr+\rr e_{1\ldots m}$ for $m$ odd. Now let $m\geq 3$. Taking into account formula~\eqref{cliffordnorm} and the fact that $e_{1\ldots 4n-1}^2=1$ while $e_{1 \ldots 4n+1}^2=-1$, the central cone coincides with the normal cone if and only if $m\not\equiv3\mod 4$.
The set $\s_{\rr_m}$ includes $e_{i_1\ldots i_s}$ if and only if $s\equiv1,2 \mod 4$. The quadratic cone $Q_{\rr_m}$ is a real algebraic proper subset of $\rr_m$. It always includes the so-called space of paravectors $\rr^{m+1} = \{x_0+\sum_{1\leq l\leq m} x_l e_l  \, \big| \,  x_0,x_l\in \rr\}$ and, in general, all vector spaces
\[
V_s = \left\{\left.x_0+\sum_{1\leq i_1<\ldots <i_s\leq m} x_{i_1\ldots i_s} e_{i_1\ldots i_s}  \, \right| \,  x_0,x_{i_1\ldots i_s}\in \rr\right\}
\]
with $s \in \{1, \ldots,m\}$ and $s \equiv 1 \mod 4$. Since $\dim V_s = 1+ \binom{m}{s} \geq 1+ \left(\frac m s\right)^s$ and since it is always possible to choose an $s \equiv 1 \mod 4$ between $\frac m 2 -2$ and $\frac m 2 +2$ (namely, $s =4\lfloor \frac{m+2} 8\rfloor +1$), we infer at once that $\dim Q_{\rr_m}$ grows exponentially in $m$. The following scheme holds for $n\geq1$:

\vskip 8pt 
\begin{tabular}{|l|l|l|l|c|l|l|l|
}
\hline
$A$ & $\dim$
& {\rm nucleus}

& {\rm center} & $n^{-1}(0)$ & $\mathfrak{Z}_A$ & $C_A$ & $Q_A$ 
\\
\hline
$\rr_{4n-1}$ & $2^{4n-1}$ 
& $\rr_{4n-1}$& $\rr+e_{1 \ldots 4n-1}\rr$ & $\{0\}$ & $\neq \emptyset$ & $\supsetneq N_A$ & $\supseteq V_{4\lfloor n /2 \rfloor+1}$
\\
$\rr_{4n}$ & $2^{4n}$ 
& $\rr_{4n}$& $\rr$ & $\{0\}$ & $\neq \emptyset$ & $=N_A$ &  $\supseteq V_{4\lfloor n /2 \rfloor+1}$
\\
$\rr_{4n+1}$ & $2^{4n+1}$
& $\rr_{4n+1}$& $\rr+e_{1\ldots 4n+1}\rr$ & $\{0\}$ & $\neq \emptyset$ & $=N_A$&  $\supseteq V_{4\lfloor n /2 \rfloor+1}$ 
\\
$\rr_{4n+2}$ & $2^{4n+2}$
& $\rr_{4n+2}$& $\rr$ & $\{0\}$ & $\neq \emptyset$ & $=N_A$ &  $\supseteq V_{4\lfloor (n+1)/2 \rfloor+1}$ 
\\
\hline
\end{tabular}\vskip 8pt

\end{example}


\subsection{Slice functions and slice regular functions}\label{sec:functions}

In this section, we overview some material from~\cite{perotti,expansionsalgebras}; namely, the definition of two special classes of $A$-valued functions and the description of their basic properties. Part of the definitions and results we are about to list (namely, those regarding slice regular functions) had been developed on specific algebras with a completely different approach. For the case $A= \hh$, we refer to~\cite[Chapter 1]{librospringer}, which also points out the corresponding references. For $A = \rr_3,\oo$, see~\cite{clifford,rocky}, respectively. Furthermore,~\cite{israel} had introduced the related theory of slice monogenic functions from $\rr^{m+1}$ to $\rr_m$. In the general setting, $A$-valued functions with domains contained in the quadratic cone $Q_A$ are studied. Therefore, it is natural to take the next assumption.

\vspace{.5em}

{\bf Assumption (D).} \emph{From this point on, we suppose $A$ to have finite dimension over $\rr$ and we assume that $Q_A \neq \R$ or, equivalently, that $\s_A \neq \emptyset$}.

\vspace{.5em}

For instance, the algebras $\rr, \s\cc, \D\rr$ are excluded.

Since left multiplication by an element of $\s_A$ induces a complex structure on $A$, we infer that the real dimension of $A$ equals $2h+2$ for some non-negative integer $h$. Then the real associative *-algebra $A$ has the following useful splitting property: for each $J \in \s_A$, there exist $J_1,\ldots,J_h \in A$ such that $\{1,J,J_1,JJ_1,\ldots,J_h,JJ_h\}$ is a real vector basis of $A$, called a \emph{splitting basis} of $A$ associated to $J$. 

\vspace{.5em}

{\bf Assumption (E).} \emph{Let $A$ be equipped with the natural $\mscr{C}^1$-manifold structure determined by the global coordinate systems associated with the real vector bases of $A$. We call the underlying topology the \emph{Euclidean topology} of $A$.}

\vspace{.5em}
The relative topology on each $\cc_J$ with $J \in \s_A$ clearly agrees with the topology determined by the natural identification between $\cc_J$ and $\cc$.
Given a subset $E$ of $\cc$, its \emph{circularization} $\OO_E$ is defined as the following subset of $Q_A$:
\[
\OO_E:=
\left\{x \in Q_A \, \big| \, \exists \alpha,\beta \in \R, \exists J \in \s_A \mathrm{\ s.t.\ } x=\alpha+\beta J, \alpha+\beta i \in E\right \}.
\]
A subset of $Q_A$ is termed \emph{circular} if it equals $\OO_E$ for some $E\subseteq\cc$. For instance, given $x=\alpha+\beta J \in Q_A$ we have that
\[
\s_x:=\alpha+\beta \, \s_A=\{\alpha+\beta I \in Q_A \, | \, I \in \s_A\}
\]
is circular, as it is the circularization of the singleton $\{\alpha+i\beta\}\subseteq \cc$. We observe that $\s_x=\{x\}$ if $x \in \R$. On the other hand, for $x \in Q_A \setminus \rr$, the set $\s_x$ is obtained by real translation and dilation from $\s_A$. Such sets are called \emph{spheres}, because the theory has been first developed in the special case of division algebras $A=\cc,\hh,\oo$ where they are genuine Euclidean spheres (see Examples~\ref{ex:lowerdimensional}).
The next observation will prove useful in the sequel.
\begin{remark}\label{preservedsphere}
Fix $a \in C_A^*$. For all $J\in \s_A$, we have $a^{-1}Ja \in \s_A$. Indeed, by Theorem~\ref{centralproduct},
\begin{align*}
n(a^{-1}Ja) &= n(a^{-1})n(J)n(a) = n(J)=1,\\
t(a^{-1}Ja) &= t (n(a)^{-1} a^c Ja) = n(a)^{-1} (a^cJa-a^cJa)=0.
\end{align*}
As a consequence, the function $x \mapsto a^{-1}xa$ maps any sphere $\s_y$ (and any circular subset of $Q_A$) to itself.
\end{remark}

\vspace{.5em}

{\bf Assumption (F).} \emph{Let $\OO=\OO_D$ be the circularization of a non-empty subset of $\cc$, denoted by $D$, which is invariant under the complex conjugation $z=\alpha+i\beta \mapsto \overline{z}=\alpha-i\beta$. As a consequence, for each $J \in \s_A$ the slice $\OO_J:=\OO\cap\cc_J$ is equivalent to $D$ under the natural identification between $\cc_J$ and $\cc$.} 

\vspace{.5em}

The class of functions we consider is defined by means of the complexified algebra $A_{\cc}=A \otimes_{\R} \cc=\{x+\ui y \, | \, x,y \in A\}$ of $A$, endowed with the following product:
\[
(x+\ui y)(x'+\ui y')=xx'-yy'+\ui (xy'+yx').
\]
The algebra $A_{\cc}$ is still alternative: the equality $(x+\ui y, x+\ui y, x'+\ui y') =0$ can be proven by direct computation, taking into account that $(x,y,x') = - (y,x,x')$ and  $(x,y,y') = - (y,x,y')$ because $A$ is alternative. Moreover, $A_{\cc}$ is compatible, or associative, if and only if $A$ is. We shall denote by $\rr_\cc$ the real subalgebra $\rr+\ui\rr\simeq\cc$ of $A_\cc$, which is included in the center of $A_\cc$.
In addition to the complex conjugation $\overline{x+\ui y}=x-\ui y$, we may endow $A_\cc$ with a $^*$-involution $x+\ui y \mapsto (x+\ui y)^c := x^c+\ui y^c$, which makes it a $^*$-algebra. We point out that (regardless of $A$) the complexified $^*$-algebra $A_\cc$ is singular:

\begin{example}
For each $J \in \s_A$, the number $1+\ui J$ has norm $(1+\ui J) (1-\ui J) = 1+J^2 + \ui (J-J) = 0$.
\end{example}

Given $F=F_1+\ui F_2:D \lra A_\cc$ with $A$-components $F_1$ and $F_2$, the function $F$ is called a \emph{stem function} on $D$ if $F(\overline{z})=\overline{F(z)}$ for every $z \in D$ or, equivalently, if $F_1(\overline{z})=F_1(z)$ and $F_2(\overline{z})=-F_2(z)$ for every $z \in D$.

\begin{definition} \label{def:slice-function}
A function $f:\OO \lra A$ is called a \emph{(left) slice function on $\OO = \OO_D$} if there exists a stem function  $F=F_1+\ui F_2:D \lra A_\cc$ such that, for all $z=\alpha+i\beta \in D$ (with $\alpha,\beta \in \R$), for all $J \in \s_A$ and for $x=\alpha+\beta J$,
\[
f(x)=F_1(z)+JF_2(z),
\]
In this situation, we say that $f$ is induced by $F$ and we write $f=\I(F)$. If $F_1$ and $F_2$ are $\R$-valued, then we say that the slice function $f$ is \emph{slice preserving}. 
We denote by $\mc{S}(\OO)$ the real vector space of slice functions on $\OO$ and by $\mc{S}_\rr(\OO)$ the subspace of slice preserving functions.
\end{definition}
The  terminology used in the definition is justified by the following property (cf.~\cite[Prop.~10]{perotti}): the components $F_1$ and $F_2$ of a stem function $F$ are $\R$-valued if and only if the slice function $f=\I(F)$ maps every ``slice'' $\OO_J$ into $\cc_J$.

Let $f:\OO \lra A$ be a slice function. Given $y=\alpha+\beta J$ and $z=\alpha+\beta K$ in $\OO$ for some $\alpha,\beta \in \rr$ and $J,K \in \s_A$ with $J-K$ invertible, as a direct consequence of the definition of slice function, the following representation formula holds for all $x=\alpha+\beta I$ with $I \in \s_A$:

\begin{equation}\label{rep1}
f(x)=(I-K)\left((J-K)^{-1}f(y)\right)-(I-J)\left((J-K)^{-1}f(z)\right).
\end{equation}
In particular, for $K=-J$,
\begin{equation}\label{rep2}
f(x)=\frac12\left(f(y)+f(y^c)\right)-\frac{I}2\left(J\left(f(y)-f(y^c)\right)\right).
\end{equation}

Suppose $f=\I(F) \in \mc{S}(\OO)$ with $F=F_1+\ui F_2$. It is useful to define a function $\vs f:\OO \lra A$, called \emph{spherical value} of $f$, and a function $f'_s:\OO \setminus \R \lra A$, called \emph{spherical derivative} of $f$, by setting
\[
\vs f(x):=\frac{1}{2}(f(x)+f(x^c))
\quad \text{and} \quad
f'_s(x):=\frac{1}{2}\im(x)^{-1}(f(x)-f(x^c)).
\] 
The original article~\cite{perotti} and subsequent papers used the notations $v_s f$ and $\partial_s f$ instead of $\vs f$ and $f'_s$, respectively. However, our present work requires a more compact notation.
Observe that $\vs f(x)=F_1(z)$ and $f'_s(x)={\rm im}(z)^{-1} F_2(z)$ if $x=\alpha+\beta J,z=\alpha+\beta i$ and ${\rm im}(z) = \beta = \frac{z-\overline z} {2i}$ is the imaginary part-function on $\cc$.
 It follows that $\vs f$ and $f'_s$ are slice functions, constant on each sphere $\s_x\subseteq \OO$.  Moreover, since $F_1$ and $F_2$ can be obtained starting from $f$, we infer at once that each slice function $f$ is induced by a unique stem function $F$.
 
In general, the pointwise product $x \mapsto f(x)g(x)$ of slice functions $f=\I(F)$ and $g=\I(G)$ is not a slice function. 

\begin{example}
By direct inspection, the quaternionic function $\hh \ni x \mapsto i x \in \hh$ is not a slice function, but it is the pointwise product of the constant function $i$ and of the identity function $\mr{id}_\hh$, which are slice.
\end{example}

On the other hand, it is easy to verify that the pointwise product $FG$ of stem functions $F$ and $G$ is again a stem function. This fact suggested the following definition.

\begin{definition} \label{def:}
Given $f=\I(F),g=\I(G) \in \mc{S}(\OO)$, the \emph{slice product of $f$ and $g$} is defined as the slice function $f \cdot g:=\I(FG) \in \mc{S}(\OO)$.
\end{definition}

\begin{example}
The constant function $f \equiv i$ on $\hh$ lifts to the constant function $F\equiv i$ and $g=\mr{id}_\hh$ lifts to $G(z) = z$. Taking into account that $F(z)G(z) = zi$ for all $z \in \cc$, it turns out that $(f\cdot g)(x)=xi$.
\end{example}
The example confirms that, in general, $f \cdot g \neq fg$. However, if the slice function $f$ is slice preserving, then we have that $f \cdot g=fg=g \cdot f$. Furthermore, if $f$ and $g$ are both slice preserving, then $f \cdot g=fg$ is again slice preserving.

Finally, we have already observed that $A_\cc$ is a $^*$-algebra. We may thus associate to each stem function $F=F_1+\ui F_2$ the function $F^c:D \lra A_\cc$ mapping $z$ to $F^c(z):=(F(z))^c=F_1^c(z)+\ui F_2^c(z)$ (where $F_h^c(z):=(F_h(z))^c$ for each $h \in \{1,2\}$). It turns out that $F^c$ is a stem function as well. The function $FF^c$, mapping $z$ to $F(z) F^c(z)=F(z) F(z)^c= n(F(z))$, is a stem function and it holds: $FF^c=n(F_1)-n(F_2)+\ui t(F_1F_2^c)$. Hence, for $f=\I(F) \in \mc{S}(\OO)$ we may set $f^c=\I(F^c)$ and define the \emph{normal function} of $f$ in $\mc{S}(\OO)$ as
\[
N(f)=f \cdot f^c=\I(FF^c).
\] 

\begin{example}\label{ex:Delta}
For fixed $y \in Q_A$, the binomial $f(x) = x-y$ has $f^c(x) = x-y^c$ and its normal function $N(f)(x) = (x-y) \cdot (x-y^c)$ coincides with the (slice preserving) quadratic polynomial
\[
\Delta_y(x):=x^2-xt(y)+n(y).
\]
If $y' \in Q_A$, then $\Delta_{y'}=\Delta_y$ if and only if $\s_{y'}=\s_y$.
\end{example}

Within the class of slice functions, we consider a special subclass having nice properties that recall those of holomorphic functions of a complex variable. This is done by imposing holomorphy on stem functions, in an appropriate sense.
If $\OO = \OO_D$ is open, then each slice $\OO_J=\OO\cap\cc_J$ with $J \in \s_A$ is open in the relative topology of $\cc_J$; therefore, $D$ itself is open. In this case, we let $\mc{S}^0(\OO)$ and $\mc{S}^1(\OO)$ denote the real vector spaces of slice functions on $\OO$ induced by continuous stem functions and by stem functions of class $\mscr{C}^1$, respectively.
Suppose that $f = \I(F) \in \mc{S}^1(\OO)$. The derivative $\partial F/\partial \overline{z}:D \lra A_{\cc}$ with respect to $\overline{z}= \alpha - \beta\ui$, that is,
\[
\frac{\partial F}{\partial\overline{z}}:=\frac{1}{2}\left(\frac{\partial F}{\partial\alpha}+\ui\frac{\partial F}{\partial\beta}\right)
\]
and the analogous $\partial F/\partial z:D \lra A_{\cc}$ are still stem functions, which induce the slice functions $\partial f/\partial x^c:=\I(\partial F/\partial \overline{z})$ and  $\partial f/\partial x:=\I(\partial F/\partial z)$ on $\OO$.

\begin{definition} \label{def:slice-regularity} 
Let $\OO$ be open. A slice function $f \in \mc{S}^1(\OO)$ is called \emph{slice regular} if $\partial f/\partial x^c\equiv0$ in $\OO$. We denote by $\mc{SR}(\OO)$ the real vector space of slice regular functions on $\OO$.
\end{definition}

Slice regularity is naturally related to complex holomorphy, in the following sense. We recall the notation $\OO_J:=\OO\cap\cc_J$ valid for all $J \in \s_A$.
    
\begin{lemma}\label{splitting} 
Let $\OO$ be open.
Let $J \in \s_A$ and let $\{1,J,J_1,JJ_1,\ldots,J_h,JJ_h\}$ be an associated splitting basis of $A$. For $f \in \mc{S}^1(\OO)$, let $f_0,f_1,\ldots,f_h : \OO_J \to \cc_J$ be the $\mscr{C}^1$ functions such that $f_{|_{\OO_J}}=\sum_{\ell=0}^h f_\ell J_{\ell}$, where $J_0:=1$. Then $f$ is slice regular if, and only if, for each $\ell \in \{0,1,\ldots,h\}$, $f_\ell$ is holomorphic from $\OO_J$ to $\cc_J$, both equipped with the complex structure associated to left multiplication by $J$.
\end{lemma}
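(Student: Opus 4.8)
The plan is to reduce slice regularity, which is defined through the \emph{global} condition $\partial f/\partial x^c\equiv0$ on $\OO$, to a condition on a single slice $\OO_J$, and there to match it term-by-term with the holomorphy of the components $f_\ell$. First I would record that $\partial f/\partial x^c=\I(\partial F/\partial\overline z)$ is itself a slice function, so it vanishes on all of $\OO$ if and only if its inducing stem function $\partial F/\partial\overline z=G_1+\ui G_2$ vanishes identically, by the uniqueness of the inducing stem function. Since $D=\overline D$, the slice $\OO_J$ contains both $\alpha+\beta J$ and $\alpha-\beta J$ for every $\alpha+i\beta\in D$; evaluating $\partial f/\partial x^c$ at these two points and using that $G$ is a stem function (so that $G_1(\overline z)=G_1(z)$ and $G_2(\overline z)=-G_2(z)$) recovers $G_1(z)$ and $G_2(z)$ — the latter because $J$ is invertible. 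Hence $\partial f/\partial x^c\equiv0$ on $\OO$ if and only if $\partial f/\partial x^c\equiv0$ on the single slice $\OO_J$.

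The heart of the argument is a direct computation showing that, on $\OO_J$, the operator $\partial f/\partial x^c$ coincides with the Cauchy--Riemann operator of the complex structure given by left multiplication by $J$. Writing $\tilde f:=f_{|_{\OO_J}}$ and $x=\alpha+\beta J$, I would differentiate $\tilde f(x)=F_1(z)+JF_2(z)$ in $\alpha$ and in $\beta$ and combine the results to verify
\[
\frac{\partial f}{\partial x^c}(x)=\frac12\left(\frac{\partial\tilde f}{\partial\alpha}+J\,\frac{\partial\tilde f}{\partial\beta}\right),\qquad x\in\OO_J.
\]
Substituting the splitting decomposition $\tilde f=\sum_{\ell=0}^h f_\ell J_\ell$ (with $J_0:=1$ and $f_\ell:\OO_J\to\cc_J$ of class $\mscr C^1$, since $F$ is) and pulling the scalar factors, which lie in $\cc_J$, past the $J_\ell$, this becomes
\[
\frac{\partial f}{\partial x^c}(x)=\sum_{\ell=0}^h\left[\frac12\left(\frac{\partial f_\ell}{\partial\alpha}+J\,\frac{\partial f_\ell}{\partial\beta}\right)\right]J_\ell,\qquad x=\alpha+\beta J\in\OO_J,
\]
where each bracketed coefficient is the Cauchy--Riemann operator of $f_\ell$ and lies in $\cc_J$.

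Finally, I would invoke that the splitting basis makes $A=\bigoplus_{\ell=0}^h\cc_J J_\ell$ a direct sum of real subspaces, so a combination $\sum_\ell c_\ell J_\ell$ with $c_\ell\in\cc_J$ vanishes if and only if every $c_\ell=0$. Applied to the last display, this shows that $\partial f/\partial x^c\equiv0$ on $\OO_J$ precisely when each $f_\ell$ satisfies its Cauchy--Riemann equation, i.e.\ is holomorphic from $\OO_J$ to $\cc_J$; combined with the reduction of the first paragraph, this yields the stated equivalence. The step requiring care is the computation on $\OO_J$, because $A$ is only \emph{alternative}: I would justify the identities $J(JF_2)=-F_2$ and $J(\partial f_\ell/\partial\beta)\,J_\ell=(J\,\partial f_\ell/\partial\beta)\,J_\ell$ by Artin's theorem, since $\partial f_\ell/\partial\beta$ and $J$ both lie in $\cc_J$, so that all the products in question take place inside the associative subalgebra generated by $J$ and $J_\ell$. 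This is the main obstacle, but it is a localized, routine application of alternativity rather than a genuine difficulty.
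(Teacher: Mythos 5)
Your proof is correct. Note that the paper itself states this lemma without proof, as part of the preliminary material recalled from the earlier references (\cite{perotti,expansionsalgebras}), so there is no in-paper argument to compare against; your reconstruction is essentially the standard one. The three ingredients are all in order: the reduction from $\OO$ to the single slice $\OO_J$ via uniqueness of the inducing stem function (using that $D=\overline D$, so both $\alpha+\beta J$ and $\alpha-\beta J$ lie in $\OO_J$ and the components $G_1,G_2$ of $\partial F/\partial\overline z$ can be recovered there); the identity $\frac{\partial f}{\partial x^c}=\frac12\bigl(\partial_\alpha\tilde f+J\,\partial_\beta\tilde f\bigr)$ on $\OO_J$, whose only nontrivial step is $J(JF_2)=J^2F_2=-F_2$, covered by alternativity; and the vanishing of the associators $(J,\partial_\beta f_\ell,J_\ell)$, which follows by bilinearity from $(J,1,J_\ell)=(J,J,J_\ell)=0$ (or by Artin's theorem, as you say), so that the coefficients of the decomposition $A=\bigoplus_\ell \cc_J J_\ell$ can be read off term by term.
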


An important fact is, that slice regularity is closed under addition and slice multiplication: if $f$ and $g$ are slice regular, then $f+g$ and $f \cdot g$ are slice regular as well. Moreover, $f$ is slice regular if and only if $f^c$ is, owing to the fact that, for all stem functions $F : D \to A$, by direct inspection $\partial F^c/\partial \overline{z} \equiv 0$ is equivalent to $\partial F/\partial \overline{z} \equiv 0$. As a consequence, a slice regular $f$ has a slice regular normal function $N(f)$.

Here are some classical examples of slice regular functions.

\begin{example}
Every polynomial of the form $\sum_{m=0}^n x^ma_m = a_0+x a_1 + \ldots x^n a_n$ with coefficients $a_0, \ldots, a_n \in A$ is a slice regular function on the whole quadratic cone $Q_A$.
\end{example}

\begin{example}
If the $^*$-algebra $A$ is endowed with a norm $\Vert \cdot \Vert_A$ such that $\Vert x \Vert_A^2 = n(x)$ for all $x \in Q_A$, it was proven in~\cite[Theorem 3.4]{expansionsalgebras} that every power series of the form $\sum_{n \in \nn} x^n a_n$ converges on the intersection between $Q_A$ and a ball $B(0, R) = \{x \in A\ |\ \Vert x \Vert_A<R\}$. If $R>0$, then the sum of the series is a slice regular function on $Q_A \cap B(0,R)$.
\end{example}

We conclude this preliminary section considering again the domains of our functions.
If $\OO=\OO_D$ and $D$ are open, then $D$ can be decomposed into a disjoint union of open subsets of $\cc$, each of which either
\begin{enumerate}
\item intersects the real line $\rr$, is connected and preserved by complex conjugation; or
\item does not intersect $\rr$ and has two connected components $D^+,D^-$, switched by complex conjugation.
\end{enumerate}
Therefore, when $D$ is open we may assume without loss of generality that it fall within case 1 or case 2. In the former case, the resulting domain $\OO=\OO_D$ is called a \emph{slice domain} because each slice $\OO_J$ with $J \in \s_A$ is a domain in the complex sense (more precisely, it is an open connected subset of $\cc_J$). In case 2, we will call $\OO=\OO_D$ a \emph{product domain} as it is homeomorphic to the Cartesian product between the complex domain $D^+$ and the sphere $\s_A$.


\section{The algebra of slice functions and ``division''}\label{sec:reciprocal}

The construction undertaken in~\cite{perotti} that we overviewed in the previous section can be neatly summarized as follows. For fixed $D \subseteq \cc$ and $\Omega=\Omega_D$ (in accordance with Assumption (F)):

\begin{enumerate}
\item[1.] the stem functions $D \to A_\cc$ form an alternative $^*$-algebra over $\rr$ with pointwise addition $(F+G)(z) = F(z)+G(z)$, multiplication $(FG)(z) = F(z)G(z)$ and conjugation $F^c(z) = F(z)^c$;
\item[2.] besides the pointwise addition $(f,g) \mapsto f + g$, a multiplication $(f,g) \mapsto f \cdot g$ and a conjugation $f \mapsto f^c$ can be defined on the set $\mc{S}(\OO)$ of slice functions on $\OO$ to make the mapping $\I$ a $^*$-algebra isomorphism from the $^*$-algebra of stem functions on $D$ to $\mc{S}(\OO)$.
\end{enumerate}
If, additionally, $\OO$ (hence $D$) is open, then:
\begin{enumerate}
\item[3.] the continuous, $\mscr{C}^1$ and holomorphic stem functions on $D$ form $^*$-subalgebras of the one described in point 1;
\item[4.] $\mc{S}^0(\OO),\mc{S}^1(\OO)$ and $\mc{SR}(\OO)$ are $^*$-subalgebras of $\mc{S}(\OO)$ (obtained as the respective images through $\I$ of the subalgebras listed in point 3).
\end{enumerate}
This allows us to study ``division'' in $\mc{S}(\OO)$ by means of Proposition~\ref{invertibles}. We thus undertake in this new setting the construction that is valid for slice regular quaternionic functions,~\cite[\S 5.1]{librospringer}. See also~\cite[\S 2.6]{librodaniele2} on slice monogenic functions. In what follows, we take into account that the norm of an element $f$ of the alternative $^*$-algebra $\mc{S}(\OO)$ is the normal function $N(f)$ we already encountered in Section~\ref{sec:functions}.

\begin{theorem}\label{invertibles_f}
A slice function $f$ admits a multiplicative inverse $f^{-\punto}$ if, and only if, $f^c$ does. If this is the case, then $(f^c)^{-\punto} = (f^{-\punto})^c$. Furthermore, $f$ admits a multiplicative inverse $f^{-\punto}$ if, and only if, both $N(f)$ and $N(f^c)$ do. If this is the case, then: 
\[
f^{-\punto} =f^c\cdot N(f)^{-\punto} = N(f^c)^{-\punto}\cdot f^c.
\]
Moreover, $N(f)^{-\punto} = (f^{-\punto})^c\cdot f^{-\punto} = N((f^{-\punto})^c)$.
\end{theorem}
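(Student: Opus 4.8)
The plan is to recognize that this statement is nothing but Proposition~\ref{invertibles} transported to the alternative $^*$-algebra $\mc{S}(\OO)$ of slice functions, and to carry out that transport carefully. By points 1 and 2 of the summary opening this section, the map $\I$ is a $^*$-algebra isomorphism from the $^*$-algebra of stem functions $D \to A_\cc$ (with pointwise addition, pointwise product $(FG)(z)=F(z)G(z)$, and conjugation $F^c(z)=F(z)^c$) onto $\mc{S}(\OO)$ equipped with addition, slice product $\cdot$, and conjugation $f \mapsto f^c$. The whole argument then consists in setting up this dictionary and reading Proposition~\ref{invertibles} through it.

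First I would verify that $\mc{S}(\OO)$ satisfies Assumptions (A)--(C), so that Proposition~\ref{invertibles} is applicable to it. Since $A_\cc$ is alternative (as shown in Subsection~\ref{sec:functions} by direct computation) and all operations on stem functions are pointwise, the algebra of $A_\cc$-valued stem functions is alternative; hence so is $\mc{S}(\OO)$, via the isomorphism $\I$. It is unitary, the unit being the slice preserving constant function $\I(1)$, and $f \mapsto f^c$ is a $^*$-involution because the corresponding operation $F\mapsto F^c$ on stem functions is one and $\I$ intertwines them. Finally, as recorded just before the statement, the norm of $f$ in this $^*$-algebra is precisely the normal function $N(f)=f\cdot f^c$, which plays the role that $n(x)=xx^c$ plays in a general algebra.

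With this dictionary in place, the proof is a direct application of Proposition~\ref{invertibles}, with $f,\ f^c,\ N(f),\ f^{-\punto}$ playing the roles of $x,\ x^c,\ n(x),\ x^{-1}$ respectively. The first two sentences of the theorem translate the first two sentences of the proposition; the equivalence ``$f$ is invertible if and only if both $N(f)$ and $N(f^c)$ are'' together with the displayed formula $f^{-\punto}=f^c\cdot N(f)^{-\punto}=N(f^c)^{-\punto}\cdot f^c$ translate the corresponding clauses; and the closing identity $N(f)^{-\punto}=(f^{-\punto})^c\cdot f^{-\punto}=N((f^{-\punto})^c)$ is the translation of $(n(x))^{-1}=(x^{-1})^c x^{-1}=n((x^{-1})^c)$. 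I expect the only genuine content beyond this bookkeeping to be the confirmation that alternativity and the $^*$-involution axioms pass from $A_\cc$ through the pointwise structure on stem functions and across the isomorphism $\I$, and that the norm of $\mc{S}(\OO)$ is indeed $N(\cdot)$; once these are secured, no new computation is required, and this routine verification is the main (and only real) obstacle.
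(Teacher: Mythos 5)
Your proposal is correct and matches the paper's own (implicit) argument exactly: the paper states Theorem~\ref{invertibles_f} without a separate proof, precisely because the preceding discussion establishes that $\mc{S}(\OO)$ is an alternative $^*$-algebra (via the isomorphism $\I$ with the pointwise algebra of $A_\cc$-valued stem functions) whose norm is $N(f)=f\cdot f^c$, so that Proposition~\ref{invertibles} applies verbatim. Your dictionary $x\leftrightarrow f$, $x^c\leftrightarrow f^c$, $n(x)\leftrightarrow N(f)$, $x^{-1}\leftrightarrow f^{-\punto}$ is exactly the intended reading.
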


We already know that, in such a setting, an inverse can be explicitly constructed for an element $f$ if $f$ is in the normal cone of the real algebra $\mc{S}(\OO)$ and $f$ is not identically $0$, that is, if $N(f)$ and $N(f^c)$ are nonzero real constants. Fortunately, the real constants are not the only elements of the center of $\mc{S}(\OO)$ that can be easily inverted. 
For all $f \in \mc{S}(\OO)$, let us denote
\[
V(f):=\{x \in \OO \, | \, f(x)=0\}.
\]
We also recall that in Definition~\ref{def:slice-function} we set out the notation $\mc{S}_\rr(\OO)$ for the subalgebra of slice preserving functions.

\begin{lemma}\label{slicepreservinginverse}
Each slice preserving slice function $g \in \mc{S}_\rr(\OO)$ belongs to the center of $\mc{S}(\OO)$. Its zero set $V(g)$ is circular and, provided $\OO'=\OO \setminus V(g)$ is not empty, the following properties hold.
\begin{enumerate}
\item For all $x \in \OO'$, the values $g(x)$ belong to $Q_A^*$ and in particular they are invertible.
\item The function $h:\OO' \to A$, defined as $h(x)= g(x)^{-1}$, is a slice preserving function and it is the reciprocal $g^{-\punto}$ of $g$ in $\mc{S}(\OO')$.
\item If $\OO'$ is open in $Q_A$, then $g^{-\punto}$ is slice regular if and only if $g$ is slice regular in $\OO'$.
\end{enumerate}
If $\OO=\OO_D$ is a slice domain or a product domain and if $g$ is slice regular and not identically zero, then $V(g)$ consists of isolated real points or isolated spheres $\s_x$ and $\OO'=\OO \setminus V(g)$ is automatically an open dense subset of $\OO$.
\end{lemma}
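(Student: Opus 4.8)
The plan is to transfer the classical one-variable identity principle from a single slice to the circular zero set $V(g)$. First I would fix $J \in \s_A$ and look at the restriction $g_J := g|_{\OO_J}$. Since $g$ is slice preserving it maps $\OO_J$ into $\cc_J$, so Lemma~\ref{splitting} reduces to the statement that slice regularity of $g$ is equivalent to $g_J$ being a holomorphic map $\OO_J \to \cc_J$ (under the identifications $\OO_J \cong D$ and $\cc_J \cong \cc$). I would then record that $g \not\equiv 0$ forces $g_J \not\equiv 0$ on each connected component of $\OO_J$: in a slice domain $\OO_J$ is connected and $g_J \equiv 0$ would mean $F_1 \equiv F_2 \equiv 0$, that is $g \equiv 0$; in a product domain, where $D = D^+ \sqcup D^-$, vanishing of $g_J$ on one component would propagate to the other through the stem symmetry $F_1(\bar z) = F_1(z),\ F_2(\bar z) = -F_2(z)$, again forcing $g \equiv 0$.

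With holomorphy and non-triviality in hand, the identity principle for holomorphic functions of one complex variable shows that the zero set of $g_J$ is discrete in each connected component of $\OO_J$, hence discrete in $\OO_J$. Recalling that $V(g) = \OO_E$ is circular (established earlier in the lemma) with $E = \{\alpha + i\beta \in D : g(\alpha + \beta J) = 0\}$, I conclude that $E$ is a discrete subset of $D$. Each $z_0 = \alpha_0 + i\beta_0 \in E$ then accounts for exactly one connected piece of $V(g)$: the isolated real point $\{\alpha_0\}$ when $\beta_0 = 0$, or the isolated sphere $\s_{x_0}$ with $x_0 = \alpha_0 + \beta_0 J$ when $\beta_0 \neq 0$.

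Finally I would extract the topological statements. Openness of $\OO' = \OO \setminus V(g)$ is immediate, since $g$ is continuous. For the isolation of each sphere and for density I would use that circularization carries open subsets of $\cc$ to open subsets of $Q_A$: given $z_0 \in E$, a small disk $W \ni z_0$ in $D$ with $W \cap E = \{z_0\}$ yields an open neighborhood $\OO_W$ of $\s_{x_0}$ meeting $V(g)$ only along $\s_{x_0}$; and since $E$ is discrete, every neighborhood of any point of $V(g)$ also contains points $\alpha + \beta I$ with $\alpha + i\beta \in D \setminus E$, so $\OO'$ is dense.

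The main obstacle is not the one-variable identity principle but the bookkeeping of the correspondence between $D$ and $\OO = \OO_D$: one must check that discreteness of $E$ inside the plane region $D$ really does yield isolation of the spheres inside the higher-dimensional manifold $\OO$ and density of $\OO'$. This rests on the continuity and openness properties of circularization, which I would either invoke from~\cite{perotti,expansionsalgebras} or verify directly by producing the neighborhoods $\OO_W$ above; the product-domain case additionally needs the stem symmetry to prevent $g$ from vanishing on one component while surviving on the other.
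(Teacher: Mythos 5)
Your argument addresses only the final assertion of the lemma (the structure of $V(g)$ when $g$ is slice regular on a slice or product domain). For that part it is correct and follows essentially the paper's route: the paper applies the identity principle to the holomorphic $\rr_\cc$-valued stem function $G$ on $D$, while you apply it to the restriction $g|_{\OO_J}$, which is the same function up to the identification $\OO_J\cong D$. You are in fact more explicit than the paper about why, on a product domain, vanishing on one component propagates to the other via the stem symmetry; this uses crucially that $F_1,F_2$ are real-valued, and is exactly the point that fails for general slice functions (cf.\ case 3 of Theorem~\ref{discretezeros}).

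The genuine gap is that the bulk of the lemma is never proved: the centrality of $g$ in $\mc{S}(\OO)$, the circularity of $V(g)$ (which you cite as ``established earlier in the lemma'' but never establish), and items 1--3 of the enumeration, namely that $g(x)\in Q_A^*$ is invertible, that $h(x)=g(x)^{-1}$ is a slice preserving function equal to the two-sided inverse $g^{-\punto}$ in the algebra $\mc{S}(\OO')$, and the regularity equivalence for $g^{-\punto}$. These are the parts of the lemma actually used later (e.g.\ in Proposition~\ref{reciprocal}), and they do not follow from your identity-principle analysis. The missing idea is to work at the stem level: $g=\I(G)$ with $G=F_1+\ui F_2$ valued in $\rr_\cc=\rr+\ui\rr$, which lies in the center (and nucleus) of $A_\cc$, so $G$ commutes and associates with every stem function --- whence centrality of $g$; moreover $G(z)=0$ if and only if $F_1(z)=F_2(z)=0$, if and only if $g(\alpha+\beta J)=F_1(z)+JF_2(z)$ vanishes for one, hence every, $J\in\s_A$ --- whence circularity of $V(g)$; and the complex reciprocal $G^{-1}=(F_1-\ui F_2)(F_1^2+F_2^2)^{-1}$ is again an $\rr_\cc$-valued stem function on $D\setminus V(G)$, so that $h:=\I(G^{-1})$ satisfies $h\cdot g=\I(G^{-1}G)=1=g\cdot h$, is holomorphically induced exactly when $G$ is, and a direct computation identifies $h(x)$ with $(F_1(z)-JF_2(z))(F_1(z)^2+F_2(z)^2)^{-1}$, i.e.\ with the inverse of the nonzero element $g(x)$ of $\cc_J\subseteq Q_A$. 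Without some version of this construction (or an equivalent pointwise argument), what you have proved is only a fragment of the statement.
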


\begin{proof}
In $A_\cc= A + \ui A$, the center includes $\rr_\cc = \rr+\ui \rr$ and the reciprocal of each nonzero element $x+\ui y \in \rr_\cc$ can be constructed with the usual formula $(x-\ui y)(x^2+y^2)^{-1}$. 
As a consequence, each $\rr_\cc$-valued stem function $G=G_1+\ui G_2:D \lra \rr_\cc$ associates and commutes with all other stem functions. Let $V(G)$ be its zero set, which must be discrete if $G$ is holomorphic and not identically zero. By the previous discussion, we have a well defined $G^{-1}: D \setminus V(G) \lra \rr_\cc$ (which is holomorphic if and only if $G$ is holomorphic in $D \setminus V(G)$).

Now, if $g$ is a slice preserving function, then $g= \I(G)$ for some $\rr_\cc$-valued stem function $G$. As a consequence, $g$ belongs to the center of $\mc{S}(\OO)$. Moreover, $V(g)$ is the circularization of $V(G)$ and $\OO'$ is the circularization of $D \setminus V(G)$. Setting $h = \I(G^{-1})$, it follows immediately that $h \cdot g = \I(G^{-1} G) = \I(1) =1$ and $g\cdot h = \I(GG^{-1}) =  \I(1) =1$ in $\OO'$. Furthermore, $h$ is slice regular if, and only if, $G^{-1}$ is holomorphic. Finally, a direct computation proves that $h(x)= g(x)^{-1}$ for all $x \in \OO'$, completing the proof.
\end{proof}

We point out that here and in the rest of the paper the restriction $g_{|_{\OO'}}$ is denoted again as $g$ and $g^{-\punto}$ stands for $\left(g_{|_{\OO'}}\right)^{-\punto}$. Similarly, the product $f\cdot g$ of two slice functions $f,g$ whose domains of definition intersect in a smaller domain $\tilde \OO\neq \emptyset$ should be read as $f_{|_{\tilde\OO}} \cdot g_{|_{\tilde\OO}}$. The consistency of this notation is guaranteed by the fact that slice multiplication is induced by the pointwise multiplication of the corresponding stem functions.

Lemma~\ref{slicepreservinginverse} exhibits a nontrivial class of functions in the center of $\mc{S}(\OO)$ admitting reciprocals. By means of Proposition~\ref{invertibles}, we can explicitly construct reciprocals for a larger subclass of $\mc{S}(\OO)$.

\begin{definition}
A slice function is termed \emph{tame} if $N(f)$ is a slice preserving function and it coincides with $N(f^c)$.
\end{definition}

Putting together Lemma~\ref{slicepreservinginverse}, Proposition~\ref{invertibles} and Theorem~\ref{centralproduct}:

\begin{proposition}\label{reciprocal}
If $f \in \mc{S}(\OO)$ is tame and $\OO' := \OO \setminus V(N(f))$ is not empty then $f$ belongs to the central cone $C_{\mc{S}(\OO')}$. In particular, $f$ is invertible in $\mc{S}(\OO')$; its reciprocal can be computed as
\[
f^{-\punto}(x) = (N(f)^{-\punto} \cdot f^c) (x)= (N(f)(x))^{-1} f^c(x);
\]
and, when $\OO'$ is open, $f^{-\punto}$ is slice regular if and only if $f$ is slice regular in $\OO'$.

Furthermore, the associator $(f,f^c,g)$ vanishes for all $g \in \mc{S}(\OO')$ and 
\begin{equation}\label{Nfg}
N(f \cdot g) = N(f) N(g) = N(g)\cdot N(f) = N(g\cdot f)
\end{equation}
for all $g$ in the central cone $C_{\mc{S}(\OO')}$. In particular, the slice product $f \cdot g$ of $f$ with any tame $g \in \mc{S}(\OO')$ is a tame element of $\mc{S}(\OO'')$ with $\OO'' := \OO' \setminus V(N(g))$ (provided $\OO''$ is not empty).
\end{proposition}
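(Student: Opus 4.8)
The plan is to reduce the entire statement to the machinery already assembled for the central cone. The crux is the membership $f \in C_{\mc{S}(\OO')}$: once this is established, invertibility, the reciprocal formula, the vanishing of the associator, and the multiplicativity of $N$ all drop out of Theorem~\ref{centralproduct} applied to the alternative $^*$-algebra $\mc{S}(\OO')$, whose norm is precisely the normal function $N$.

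So first I would verify $f \in C_{\mc{S}(\OO')}$ directly from the definition of tameness. Tameness says that $N(f)$ is slice preserving and that $N(f^c) = N(f)$; hence the norm $n(f) = N(f)$ and $n(f^c) = N(f^c)$ both equal the single slice preserving function $N(f)$. By Lemma~\ref{slicepreservinginverse} a slice preserving function lies in the center of $\mc{S}(\OO')$, and on $\OO' = \OO \setminus V(N(f))$ it is invertible, since its zero set has been removed, with reciprocal $N(f)^{-\punto}$ again slice preserving and satisfying $N(f)^{-\punto}(x) = N(f)(x)^{-1}$. Thus both $N(f)$ and $N(f^c)$ are invertible central elements of $\mc{S}(\OO')$, which is exactly the condition $f \in C_{\mc{S}(\OO')}$; moreover $f \neq 0$, as $N(f)$ is invertible. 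Theorem~\ref{centralproduct}(2) then yields invertibility and $f^{-\punto} = N(f)^{-\punto} \cdot f^c$, and the pointwise identity $f^{-\punto}(x) = N(f)(x)^{-1} f^c(x)$ follows because $N(f)^{-\punto}$ is slice preserving, so its slice product with $f^c$ agrees with the pointwise product.

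For the slice-regularity equivalence I would argue symmetrically. In the forward direction, if $f$ is slice regular then so are $f^c$ and $N(f)$, hence so is $N(f)^{-\punto}$ by Lemma~\ref{slicepreservinginverse}(3), and therefore so is the slice product $f^{-\punto} = N(f)^{-\punto}\cdot f^c$. The hard part will be the converse, where I cannot simply invoke ``the normal function of a slice regular function is slice regular'' without already knowing that $f$ is regular. I would circumvent this by first showing that $f^{-\punto}$ is itself tame: a short computation using $(f^{-\punto})^c = N(f)^{-\punto} \cdot f$, the centrality of $N(f)^{-\punto}$, and $f^c\cdot f = f\cdot f^c = N(f)$ yields $N(f^{-\punto}) = N((f^{-\punto})^c) = N(f)^{-\punto}$, which is slice preserving. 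Since $V(N(f^{-\punto})) \cap \OO' = \emptyset$ and $(f^{-\punto})^{-\punto} = f$, the roles of $f$ and $f^{-\punto}$ over $\OO'$ are interchangeable, and applying the forward direction to $f^{-\punto}$ gives that $f$ is slice regular whenever $f^{-\punto}$ is.

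Finally, the ``Furthermore'' part is immediate from Theorem~\ref{centralproduct}. The associator $(f, f^c, g)$ vanishes for every $g \in \mc{S}(\OO')$ by part (3), since $f \in C_{\mc{S}(\OO')}$; and identity~\eqref{Nfg} is part (4), with the juxtaposition $N(f)N(g)$ coinciding with the slice product because $N(f)$ is slice preserving. For the closure statement, given a tame $g$, the first part of the proposition places $g$ in $C_{\mc{S}(\OO'')}$, while $f$ remains in $C_{\mc{S}(\OO'')}$ upon restriction from $\OO'$ to $\OO'' \subseteq \OO'$; then $N(f\cdot g) = N(f)N(g)$ and $N((f\cdot g)^c) = N(g^c \cdot f^c) = N(g^c)N(f^c) = N(g)N(f)$ by Theorem~\ref{centralproduct}(4) and tameness of $f$ and $g$, so both products are slice preserving and equal, whence $f \cdot g$ is tame. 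The only genuine obstacle in the whole argument is the converse of the regularity equivalence, which the tameness of $f^{-\punto}$ resolves.
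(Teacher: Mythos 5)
Your proposal is correct and follows exactly the route the paper intends: the paper offers no written proof beyond the phrase ``Putting together Lemma~\ref{slicepreservinginverse}, Proposition~\ref{invertibles} and Theorem~\ref{centralproduct}'', and your argument is precisely a careful unpacking of that — tameness places $f$ in $C_{\mc{S}(\OO')}$ via Lemma~\ref{slicepreservinginverse}, after which Theorem~\ref{centralproduct} delivers the inverse formula, the associator identity and~\eqref{Nfg}. Your handling of the converse regularity implication via the tameness of $f^{-\punto}$ and $N(f^{-\punto})=N(f)^{-\punto}$ is a legitimate and necessary detail that the paper leaves implicit.
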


If we assume regularity for $f$, then we can say a bit more about the domain $\OO'$ of its reciprocal. Indeed, Lemma~\ref{slicepreservinginverse} implies what follows.

\begin{remark}\label{dense}
If $\OO=\OO_D$ is a slice domain or a product domain and if $f\in\mc{SR}(\OO)$ is tame, then either $N(f) \equiv 0$ and $\OO' = \OO \setminus V(N(f))$ is empty; or $V(N(f))$ consists of isolated real points or isolated spheres $\s_x$ and $\OO' = \OO \setminus V(N(f))$ is dense in $\OO$.
\end{remark}

However, at this point we have no instrument to exclude that $N(f) \equiv 0$. In Sections~\ref{sec:zeros} and~\ref{sec:zerosofproducts}, we will study the zeros of slice functions and we will gain some understanding about the existence of zero divisors among them, thus completing Proposition~\ref{reciprocal}.
Let us now add another result that will prove useful in the sequel.

\begin{proposition}\label{Nfgregular}
Assume that $\OO$ is a slice domain or a product domain.
If $f$ and $g$ are slice regular and tame on $\OO$, and $N(f)$ and $N(g)$ do not vanish identically, then $N(f\cdot g)=N(f)N(g)=N(g)N(f)$ on $\OO$. As a consequence, $f\cdot g$ is a tame element of $\mc{SR}(\OO)$.
\end{proposition}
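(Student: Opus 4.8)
The plan is to establish the norm identity first on a dense open subset of $\OO$ on which $f$ and $g$ are both invertible, and then to extend it to all of $\OO$ by continuity; the tameness of $f \cdot g$ will follow by applying the same identity to a conjugated pair. First I would invoke Remark~\ref{dense}: since $f$ and $g$ are slice regular and tame on the slice or product domain $\OO$ with $N(f), N(g) \not\equiv 0$, the zero sets $V(N(f))$ and $V(N(g))$ consist of isolated real points or isolated spheres, so that $\OO \setminus V(N(f))$ and $\OO \setminus V(N(g))$ are open and dense in $\OO$. As $\OO$ is an open subset of $A \cong \rr^{2h+2}$, hence a Baire space, the intersection $\OO'' := \OO \setminus \big(V(N(f)) \cup V(N(g))\big)$ is again open and dense, in particular non-empty.

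Next I would transfer the computation to $\OO''$. There neither $N(f)$ nor $N(g)$ vanishes, so Proposition~\ref{reciprocal} places the restrictions of $f$ and of $g$ in the central cone $C_{\mc{S}(\OO'')}$, and formula~\eqref{Nfg} of that proposition gives
\[
N(f \cdot g) = N(f) N(g) = N(g) N(f) = N(g \cdot f)
\quad \text{on } \OO''.
\]
All three functions $N(f \cdot g)$, $N(f) N(g)$ and $N(g) N(f)$ are slice regular on $\OO$ --- products and normal functions of slice regular functions are slice regular --- hence continuous; agreeing on the dense set $\OO''$, they must agree on the whole of $\OO$. This yields the asserted identity globally, the middle equality being in any case automatic since slice preserving functions lie in the center of $\mc{S}(\OO)$ by Lemma~\ref{slicepreservinginverse}.

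For the final assertion I would verify the two defining properties of tameness for $f \cdot g$. The product $f \cdot g$ is slice regular, and $N(f \cdot g) = N(f) N(g)$ is slice preserving, being the product of the slice preserving functions $N(f)$ and $N(g)$. It remains to check $N\big((f \cdot g)^c\big) = N(f \cdot g)$. Since $(f \cdot g)^c = g^c \cdot f^c$ and since $f^c, g^c$ are themselves slice regular and tame with $N(f^c) = N(f)$ and $N(g^c) = N(g)$ (both not identically zero), the identity just proved, applied to the pair $(g^c, f^c)$, gives
\[
N\big((f \cdot g)^c\big) = N(g^c \cdot f^c) = N(g^c) N(f^c) = N(g) N(f) = N(f \cdot g),
\]
so that $f \cdot g$ is indeed tame.

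The step I expect to be most delicate is the domain bookkeeping behind the density argument: one must ensure that after deleting the two thin zero sets there still remains a dense set on which $f$ and $g$ simultaneously belong to the central cone, and that the three functions in play are genuinely continuous so that their coincidence on $\OO''$ propagates to $\OO$. Once this is secured, the result is a direct combination of Remark~\ref{dense}, Proposition~\ref{reciprocal} and Lemma~\ref{slicepreservinginverse}.
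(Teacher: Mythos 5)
Your proposal is correct and follows essentially the same route as the paper: apply Proposition~\ref{reciprocal} to obtain formula~\eqref{Nfg} on $\OO''=\OO\setminus(V(N(f))\cup V(N(g)))$, use Remark~\ref{dense} to see that $\OO''$ is dense, and conclude by continuity. Your explicit verification of the tameness of $f\cdot g$ via the pair $(g^c,f^c)$ is a welcome elaboration of a step the paper leaves implicit, and is sound since $f^c,g^c$ inherit slice regularity, tameness and the non-vanishing of their normal functions.
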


\begin{proof}
By Proposition~\ref{reciprocal}, Equation~\eqref{Nfg} holds in $\OO''=\OO \setminus (V(N(f))\cup V(N(g)))$. According to our hypotheses and to Remark~\ref{dense}, $\OO''$ is dense in $\OO$. Therefore, by continuity, Equation~\eqref{Nfg} holds in $\OO$.
\end{proof}


\subsection{The associative or compatible case}

If $A$ is associative, then $A_\cc$ is associative, too. The algebra of stem functions $D \to A_\cc$ is therefore associative and so is the algebra $\mc{S}(\OO)$ of slice functions. The converse also holds, as the algebras of stem and slice functions include all $A$-valued constant functions. The same reasoning proves that $\mc{S}(\OO)$ is compatible if and only if $A$ is.

In this case, the multiplicative properties are a bit stronger as an immediate consequence of Proposition~\ref{compatiblecasenorm}:

\begin{remark}\label{compatibleN}
Suppose $A$ is compatible and $f,g \in \mc{S}(\OO)$. Then $N(f \cdot g) = f \cdot N(g) \cdot f^c$. If, moreover, $N(g)$ is in the center of $\mc{S}(\OO)$ then
\begin{equation}
N(f \cdot g) = N(f)\cdot N(g) = N(g)\cdot N(f).
\end{equation}
This is true, in particular, when $g$ is tame. As a consequence, if $f,g$ are both tame in $\OO$ then $f \cdot g$ is tame in $\OO$, too.
\end{remark}


\section{Explicit formulas for the operations on slice functions}\label{sec:explicit_formulas}

We now find some explicit formulas for multiplication, conjugation and normalization on $\mc{S}(\OO)$. In the associative case, we will also find an explicit formula for reciprocals and quotients. 

Let us begin with a technical lemma. Any juxtaposition $\im h$ denotes the function $x \mapsto \im(x) h(x)$ obtained by pointwise product.

\begin{lemma}
Consider a function $\mc{S}(\OO)$, its spherical value function $\vs  f \in \mc{S}(\OO)$ and its spherical derivative $f'_s \in \mc{S}(\OO \setminus \rr)$. Consider, moreover, the slice preserving function $\im:Q_A \to Q_A$. Then
\[
f=\vs  f + \im \cdot\, f'_s = \vs  f + \im  f'_s
\]
in $\OO\setminus \rr$. Furthermore, $f=\vs f$ in $\OO \cap \rr$.
\end{lemma}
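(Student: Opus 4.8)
The plan is to unwind everything to the level of stem functions, where the decomposition becomes a triviality of the complexified algebra. Write $f = \I(F)$ with $F = F_1 + \ui F_2$. Recall from the definitions recorded just before the statement that for $x = \alpha + \beta J$ with $z = \alpha + \beta i \in D$ one has $\vs f(x) = F_1(z)$ and $f'_s(x) = \mathrm{im}(z)^{-1} F_2(z) = \beta^{-1} F_2(z)$ (for $x \notin \rr$, so that $\beta \neq 0$), while $\im(x) = \beta J$. Thus the three functions appearing on the right-hand side all have transparent expressions in terms of $F_1$ and $F_2$, and the whole identity should reduce to the defining formula $f(x) = F_1(z) + J F_2(z)$.

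First I would treat the generic point $x = \alpha + \beta J \in \OO \setminus \rr$, so $\beta \neq 0$. Here $\im(x) = \beta J$, hence $(\im \cdot f'_s)(x)$ — which is genuinely a slice product, but since $\im$ is slice preserving it agrees with the pointwise product $\im f'_s$ by the remark recorded after Definition~\ref{def:} — evaluates to $(\beta J)(\beta^{-1} F_2(z)) = J F_2(z)$. Adding the spherical value gives $\vs f(x) + (\im \cdot f'_s)(x) = F_1(z) + J F_2(z) = f(x)$, which is exactly the defining relation of a slice function. The only point worth stating carefully is \emph{why} the slice product $\im \cdot f'_s$ coincides with the pointwise product $\im f'_s$: this is precisely the fact, noted in Subsection~\ref{sec:functions}, that multiplication on the left by a slice preserving function equals pointwise multiplication, so $\im \cdot f'_s = \im f'_s$ and no noncommutativity obstruction arises.

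Next I would dispatch the real case $x \in \OO \cap \rr$. Here $z = \overline{z}$, so the stem function constraint $F_2(\overline z) = -F_2(z)$ forces $F_2(z) = 0$; therefore $f(x) = F_1(z) = \vs f(x)$, which is the final assertion. (This also explains why $f'_s$ need only be defined on $\OO \setminus \rr$: the factor $\mathrm{im}(z)^{-1}$ would blow up on the reals, but it is multiplied against the vanishing $F_2$, so the product $\im \cdot f'_s$ extends continuously by zero and the displayed identity remains consistent at real points in the limiting sense.)

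I do not anticipate a genuine obstacle here; the result is essentially a reformulation of the definition. The one place demanding a line of justification, rather than pure bookkeeping, is the passage from the slice product $\im \cdot f'_s$ to the pointwise product $\im f'_s$ — this is where one must invoke that $\im$ is slice preserving, rather than silently treating the algebra as commutative. Everything else is a direct substitution of the formulas $\vs f(x) = F_1(z)$, $f'_s(x) = \beta^{-1} F_2(z)$, and $\im(x) = \beta J$ into the definition $f(x) = F_1(z) + J F_2(z)$.
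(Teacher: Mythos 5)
Your proof is correct and follows essentially the same route as the paper's: both reduce to the stem-function identities $\vs f(x)=F_1(z)$, $f'_s(x)=\mathrm{im}(z)^{-1}F_2(z)$, use that $\im$ is induced by the $\ui\rr$-valued stem function $\ui\,\mathrm{im}(z)$ (hence slice preserving, so slice and pointwise products agree), and recover $f(x)=F_1(z)+JF_2(z)$. The paper phrases the decomposition at the level of the stem function $F$ while you verify it pointwise, but the content is identical.
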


\begin{proof}
If $F(z) = F_1(z) + \ui F_2(z)$ is the stem function inducing $f$, then we already observed that $\vs f(x)=F_1(z)$ and $ f'_s(x)={\rm im}(z)^{-1} F_2(z)$ for $x = \alpha+J\beta \in \OO \setminus \rr$, $z = \alpha+i \beta$, ${\rm im}(z) = \beta$, where $F_1(z), {\rm im}(z)^{-1} F_2(z)$ are $A$-valued stem functions, so that $\vs  f,f'_s$ are slice functions that are constant along each sphere $\s_x$. Moreover, 
\[
F(z) = F_1(z) + (\ui\, {\rm im}(z)) ({\rm im}(z)^{-1} F_2(z)),
\]
where $\ui \, {\rm im}(z)$ is itself a stem function with values in $\ui \rr$. The induced function $\im(\alpha+\beta J) = J \beta$ is therefore a slice preserving function. The thesis easily follows.
\end{proof}

By direct computation, we can make the following observation.

\begin{remark}
Let $f,g \in \mc{S}(\OO)$.
For all $x \in \OO \cap \rr$, we have $f^c(x)=f(x)^c$, $(f \cdot g)(x) = f(x) g(x)$ and $N(f)(x)=n(f(x))$.
On the other hand, for all $x \in \OO \setminus \rr$,
\begin{equation}\label{decomposedconjugate}
f^c(x) = \underbrace{\vs  f(x)^c}_{\vs{(f^c)}(x)}+ \im(x)   \underbrace{f'_s (x)^c}_{(f^c)'_s (x)}
\end{equation}
and
\begin{equation} \label{decomposednormal}
N(f)(x) =\underbrace{n(\vs  f(x)) + \im(x)^2 n( f'_s(x))}_{\vs{N(f)}(x)} + \im(x)\, \underbrace{t\big(\vs  f(x)\, f'_s(x)^c\big)}_{{N(f)}'_s(x)}.
\end{equation}
Moreover, the formula
\begin{equation}\label{decomposedproduct}
f\cdot g = \underbrace{\vs  f\,\vs  g+\im^2 f'_s\, g'_s}_{\vs{(f\cdot g)}}+\im\, \underbrace{(\vs  f\, g'_s+ f'_s\, \vs  g)}_{{(f\cdot g)}'_s}.
\end{equation}
holds in $\OO \setminus \rr$. In particular, for each $x \in \OO\setminus \rr$, if $g$ is constant along the sphere $\s_x$ (i.e., if $g'_s(x)=0$), then $g^c(x) = g(x)^c, N(g)(x) = n(g(x))$ and
\begin{equation}\label{productwithcircular}
(f\cdot g)(x) = f(x)g(x) - (\im(x), f'_s(x),g(x)).
\end{equation}
\end{remark}

We point out that, while $\im$ is in the center of $\mc{S}(\OO)$ (for all $g,h \in\mc{S}(\OO)$, the equalities $\im \cdot\, g= g \cdot \im$ and $\im \cdot\, (h \cdot g) = (\im \cdot\, h) \cdot g$ hold), the single values of $\im$ are not in the center nor in the nucleus of $A$ in general so that $(\im(x),h(x),g(x))$ needs not vanish. Therefore, while we may well write $\im \cdot\, h\, \cdot g$, the expression $\im h\, g$ would be ambiguous in a nonassociative setting. Additionally, since the function $\im$ is slice preserving, $(\im \cdot\, h\, \cdot g) (x) = \im(x) (h \cdot g)(x)$. Thus, the notation $\im\ (h\cdot g)$ can be used in place of $\im \cdot h \cdot g$.

In order to prove the announced formulas, we need one more technical remark.

\begin{remark}\label{imaginaryassociator}
For all $a, b \in A$ and all $J \in \s_A$
\begin{align*}
J(J, a,b) &= J((Ja)b -J(ab))\\
&= J((Ja)b) + ab\\
&= - (J,Ja,b) +(J^2 a) b + ab\\
&= - (J,Ja,b)
\end{align*}
Furthermore, $J(J, a,b) = - J(J, b,a) = (J,Jb,a) = -(J, a,Jb)$.
\end{remark}

\begin{theorem}
For all 
$f,g \in \mc{S}(\OO)$,
\begin{equation}\label{productwithdecomposition}
f\cdot g = f \cdot \vs  g+\im\, (f \cdot  g'_s)
\end{equation}
in $\OO \setminus \rr$. Moreover, for all $x \in \OO \setminus \rr$,
\begin{equation}\label{productwithassociators}
(f\cdot g)(x) = f(x) \vs  g(x)+\im(x) \big(f(x)  g'_s(x)\big) - (\im(x),  f'_s(x),g(x^c)).
\end{equation}
\end{theorem}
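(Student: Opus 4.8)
The plan is to reduce everything to the fundamental decomposition lemma $f = \vs f + \im\cdot f'_s$ and the explicit product formula~\eqref{decomposedproduct}, expanding all slice products in terms of spherical values and spherical derivatives, and then regrouping the terms that differ from the naive pointwise expression into a single associator. First I would establish~\eqref{productwithdecomposition} at the level of slice functions, which is the clean algebraic identity. Since $\vs g$ and $g'_s$ are slice functions constant on each sphere, I can write $g = \vs g + \im\cdot g'_s$ and then distribute $f\cdot(-)$ over this sum. Because $\im$ is slice preserving and hence central in $\mc{S}(\OO)$, the term $f\cdot(\im\cdot g'_s)$ equals $\im\cdot(f\cdot g'_s)$, which gives precisely $f\cdot g = f\cdot\vs g + \im\cdot(f\cdot g'_s)$. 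This part is formal and should be routine.

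The genuine work lies in~\eqref{productwithassociators}, which is the pointwise evaluation. Here I would take the identity~\eqref{productwithdecomposition} and evaluate at a point $x\in\OO\setminus\rr$, writing $\im(x) = \beta J$ for $x = \alpha+\beta J$. The term $f\cdot\vs g$ evaluates as a product with a function constant along $\s_x$, so formula~\eqref{productwithcircular} applies and yields $f(x)\vs g(x) - (\im(x), f'_s(x), \vs g(x))$. The second term $\im\cdot(f\cdot g'_s)$, again by sliceness of $\im$, evaluates to $\im(x)\,(f\cdot g'_s)(x)$, and since $g'_s$ is constant on $\s_x$ I can apply~\eqref{productwithcircular} once more to get $\im(x)\big(f(x)g'_s(x) - (\im(x), f'_s(x), g'_s(x))\big)$. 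The target right-hand side wants $f(x)\vs g(x) + \im(x)\big(f(x)g'_s(x)\big)$ plus a single associator $-(\im(x), f'_s(x), g(x^c))$, so the bulk of the proof is showing that the leftover associator terms collapse to exactly $-(\im(x), f'_s(x), g(x^c))$.

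The hard part will be this associator bookkeeping, and Remark~\ref{imaginaryassociator} is the designated tool. After evaluating, the surplus associators are $-(\im(x), f'_s(x), \vs g(x))$ from the first term and $-\im(x)\,(\im(x), f'_s(x), g'_s(x))$ from the second. Writing $\im(x) = \beta J$, the factor $\im(x)\,(\im(x), \cdot, \cdot) = \beta^2 J(J, \cdot, \cdot)$, and Remark~\ref{imaginaryassociator} lets me rewrite $J(J, f'_s(x), g'_s(x))$ as $-(J, f'_s(x), Jg'_s(x))$, i.e.\ $\beta^2\,\im(x)(\im(x),f'_s(x),g'_s(x)) = -(\im(x), f'_s(x), \im(x)^2 g'_s(x))$ up to the correct sign and scalar, since $\im(x)^2 = -\beta^2$. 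Combining with the first surplus term and using bilinearity of the associator in its last slot, the two contributions should merge into $-(\im(x), f'_s(x), \vs g(x) - \im(x)g'_s(x))$. The key observation that finishes the proof is that $\vs g(x) - \im(x)g'_s(x)$ is exactly $g(x^c)$: indeed $x^c = \alpha-\beta J$ has $\im(x^c) = -\im(x)$, and the decomposition lemma gives $g(x^c) = \vs g(x^c) + \im(x^c)g'_s(x^c) = \vs g(x) - \im(x)g'_s(x)$ because $\vs g$ and $g'_s$ are constant on $\s_x$. I anticipate the only real risk is tracking signs correctly through Remark~\ref{imaginaryassociator} and the $\im(x)^2 = -\beta^2$ substitution, so I would verify the sign of each associator term explicitly before asserting the collapse.
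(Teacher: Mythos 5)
Your proposal follows the paper's proof exactly: decompose $g = \vs g + \im\cdot g'_s$ and use the centrality of $\im$ to get~\eqref{productwithdecomposition}, then evaluate each term via~\eqref{productwithcircular} and collapse the two surplus associators through Remark~\ref{imaginaryassociator} together with the identity $g(x^c) = \vs g(x) - \im(x)\,g'_s(x)$. The only blemish is the garbled intermediate display $\beta^2\,\im(x)(\im(x),f'_s(x),g'_s(x)) = -(\im(x),f'_s(x),\im(x)^2 g'_s(x))$ — the correct statement is simply $\im(x)(\im(x),f'_s(x),g'_s(x)) = -(\im(x),f'_s(x),\im(x) g'_s(x))$ with no extra scalars — but you flag this uncertainty yourself and the final merged associator $-(\im(x),f'_s(x),\vs g(x)-\im(x)g'_s(x))$ is exactly right.
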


\begin{proof}
Formula~\eqref{productwithdecomposition} follows directly from the decomposition $g = \vs  g+\im \cdot\,  g'_s$ and from the fact that $f\cdot(\im \cdot\,  g'_s)= \im \cdot f \cdot  g'_s = \im\, (f\cdot  g'_s)$. Owing to formula~\eqref{productwithcircular},
\begin{align*}
(f \cdot \vs  g)(x) &= f(x) \vs  g(x) - (\im(x),  f'_s(x),\vs  g(x))\\
\im(x) (f \cdot  g'_s)(x) &= \im(x) \big(f(x)  g'_s(x)\big) - \im(x)(\im(x),  f'_s(x), g'_s(x)).
\end{align*}
By Remark~\ref{imaginaryassociator}, 
\begin{align*}
-& (\im(x),  f'_s(x),\vs  g(x)) - \im(x)(\im(x),  f'_s(x), g'_s(x))\\
=& - (\im(x),  f'_s(x),\vs  g(x)) + (\im(x), f'_s(x),\im(x) g'_s(x))\\
=&-\big(\im(x), f'_s(x),\vs  g(x)-\im(x) g'_s(x)\big)\\
=&-(\im(x), f'_s(x),g(x^c))
\end{align*}
and the proof of~\eqref{productwithassociators} is complete.
\end{proof}


\subsection{The associative case}
\label{Theassociativecase}

Within an associative algebra $A$, formula~\eqref{productwithassociators} simplifies significantly and it allows further manipulation.

\begin{corollary}\label{formula_associative_case}
If $A$ is associative then, for all $f,g \in \mc{S}(\OO)$ and for all $x \in \OO \setminus \rr$,
\[
(f\cdot g)(x) = f(x) \vs  g(x)+\im(x) f(x)  g'_s(x).
\]
Therefore:
\begin{itemize}
\item if $f(x) =0$ then $(f\cdot g)(x) =0$;
\item if $f(x)$ is invertible then $(f\cdot g)(x) = f(x) \big(\vs  g(x)+f(x)^{-1}\im(x) f(x)  g'_s(x)\big)$;
\item if $f(x)$ is invertible and $f(x)^{-1} x f(x)\in \s_x$ then $(f\cdot g)(x) = f(x)g\big(f(x)^{-1} x f(x)\big)$.
\end{itemize}
\end{corollary}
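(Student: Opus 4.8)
The plan is to start from the already-proven formula~\eqref{productwithassociators} and specialize it to the associative setting. Since $A$ is associative, its nucleus is all of $A$ and every associator vanishes identically; in particular $(\im(x),f'_s(x),g(x^c))=0$. Plugging this into~\eqref{productwithassociators} collapses it to
\[
(f\cdot g)(x) = f(x)\vs g(x)+\im(x)\big(f(x)g'_s(x)\big),
\]
and by associativity we may drop the parentheses and write $\im(x)f(x)g'_s(x)$, which is exactly the claimed master formula. This is the only genuine computation; the three bullet points are corollaries of it obtained by algebraic manipulation.

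For the first bullet, if $f(x)=0$ then both summands $f(x)\vs g(x)$ and $\im(x)f(x)g'_s(x)$ vanish, so $(f\cdot g)(x)=0$ immediately. For the second bullet, when $f(x)$ is invertible I would factor $f(x)$ out on the left: the first term is already $f(x)\vs g(x)$, and in the second term I insert $f(x)^{-1}f(x)=1$ to write $\im(x)f(x)g'_s(x)=f(x)\big(f(x)^{-1}\im(x)f(x)\big)g'_s(x)$, using associativity freely. Collecting the common left factor $f(x)$ yields
\[
(f\cdot g)(x)=f(x)\Big(\vs g(x)+f(x)^{-1}\im(x)f(x)\,g'_s(x)\Big),
\]
as stated.

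The third bullet is the most delicate step and is really the only place where something beyond bookkeeping happens. The hypothesis is that $f(x)$ is invertible and that the conjugated element $w:=f(x)^{-1}xf(x)$ lies on the sphere $\s_x$, so that $w\in Q_A$ and $\re(w)=\re(x)$, $|\im(w)|=|\im(x)|$. The key observation is that $g$, being a slice function, is determined on $\s_x$ by its spherical value and spherical derivative: for any $u\in\s_x$ one has $g(u)=\vs g(x)+\im(u)g'_s(x)$, since $\vs g$ and $g'_s$ are constant on $\s_x$. I would therefore compute $\im(w)=f(x)^{-1}xf(x)-\re(x)=f(x)^{-1}\im(x)f(x)$ (using that $\re(x)$ is central and $f(x)^{-1}\re(x)f(x)=\re(x)$), so that $g(w)=\vs g(x)+f(x)^{-1}\im(x)f(x)\,g'_s(x)$, which matches exactly the parenthesized factor in the second bullet. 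Hence $(f\cdot g)(x)=f(x)g(w)=f(x)g\big(f(x)^{-1}xf(x)\big)$. The one subtlety I expect to need care with is verifying that $w$ genuinely lies in the slice $\OO$ and on $\s_x$ so that evaluating $g$ at $w$ makes sense and the representation $g(w)=\vs g(x)+\im(w)g'_s(x)$ is legitimate; this is precisely what the hypothesis $f(x)^{-1}xf(x)\in\s_x$ guarantees, and it is analogous to the classical formula~\eqref{introassociativeproduct} recalled in the introduction.
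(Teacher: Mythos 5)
Your proposal is correct and follows essentially the same route as the paper, which states this as an immediate corollary of formula~\eqref{productwithassociators}: associativity kills the associator term, the bullets follow by factoring out $f(x)$, and the third bullet uses the sphere representation $g(u)=\vs g(x)+\im(u)g'_s(x)$ for $u\in\s_x$ together with $\im\big(f(x)^{-1}xf(x)\big)=f(x)^{-1}\im(x)f(x)$, exactly as the paper does. Your attention to the hypothesis $f(x)^{-1}xf(x)\in\s_x$ (which guarantees $w\in\OO$ by circularity of $\OO$) is the right point of care.
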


By Remark~\ref{preservedsphere}, $f(x) \in C_A^*$ guarantees that $f(x)^{-1} x f(x) = \re(x) + f(x)^{-1}\im(x) f(x)$ lies in $\s_x$. Here is another sufficient condition, valid under the present associativity assumption.

\begin{lemma} \label{lem:T_g}
Suppose $A$ is associative. Then, for all $f \in \mc{S}(\OO)$ and for all $x \in \Omega$ such that $f(x)$ is invertible, the point $f(x)^{-1} x f(x)$ belongs to $\s_x$, provided $n(f(x))$ commutes with $x$. The latter commutativity condition is always satisfied if $f$ is tame.
\end{lemma}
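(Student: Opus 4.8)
The plan is to prove the two assertions separately: first that, under associativity and the stated commutativity condition, $f(x)^{-1}xf(x)\in\s_x$; and second that tameness guarantees that commutativity condition. Throughout I would write $a:=f(x)$ and, for $x\notin\rr$, $x=\alpha+\beta J$ with $\alpha,\beta\in\rr$, $\beta\neq0$, $J\in\s_A$. Since $\alpha$ is real and hence central, associativity gives $a^{-1}xa=\alpha+\beta\,a^{-1}Ja$, so everything reduces to showing $a^{-1}Ja\in\s_A$, i.e. $t(a^{-1}Ja)=0$ and $n(a^{-1}Ja)=1$. The real case $x\in\rr$ is trivial, as then $\s_x=\{x\}$ and $a^{-1}xa=x$. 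I would also record at the outset that ``$n(a)$ commutes with $x$'' is equivalent, for $x\notin\rr$, to ``$n(a)$ commutes with $J$'' (cancel the central $\alpha$ and the scalar $\beta$), and that $n(a)^c=n(a)$ since $n(a)=aa^c$.

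For the first assertion I would compute the two invariants directly. For the norm, expanding $(a^{-1}Ja)^c=a^c(-J)(a^c)^{-1}$ and regrouping $aa^c=n(a)$ by associativity yields $n(a^{-1}Ja)=-a^{-1}J\,n(a)\,J(a^c)^{-1}$; pushing $n(a)$ past $J$ (allowed by the commutativity hypothesis), using $J^2=-1$, and then $a^{-1}n(a)=a^c$ collapses this to $a^c(a^c)^{-1}=1$. For the trace, I would substitute $a^{-1}=a^c n(a)^{-1}$ and $(a^c)^{-1}=n(a)^{-1}a$ (both from Proposition~\ref{invertibles}, the latter using $n(a)^c=n(a)$) to rewrite both $a^{-1}Ja$ and $a^cJ(a^c)^{-1}$ as $a^cJ\,n(a)^{-1}a$, again moving $n(a)^{-1}$ past $J$; their difference is exactly $t(a^{-1}Ja)$, which therefore vanishes. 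This shows $a^{-1}Ja\in\s_A$ and hence $a^{-1}xa\in\s_x$. (The sufficiency of $f(x)\in C_A^*$ is the already-noted consequence of Remark~\ref{preservedsphere}.)

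For the second assertion I would exploit that tameness makes $N(f)$ slice preserving, so $N(f)(x)\in\cc_J$ and in particular $N(f)(x)$ commutes with $x$. The task is then to transfer this to the \emph{pointwise} norm $n(f(x))=f(x)f(x)^c$, which is genuinely different from the slice-product value $N(f)(x)=(f\cdot f^c)(x)$. Using the decomposition $f=\vs f+\im\cdot f'_s$ to write $f(x)=\vs f(x)+\im(x)f'_s(x)$ and comparing the resulting expression for $n(f(x))$ with formula~\eqref{decomposednormal} for $N(f)(x)$, the difference $n(f(x))-N(f)(x)$ reduces (setting $d:=\vs f(x)f'_s(x)^c$ and $c:=f'_s(x)$) to a combination of the two blocks $\beta^2\big(n(c)-Jn(c)J\big)$ and $\beta\big(dJ+Jd\big)$. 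Each of these commutes with $J$ by a one-line verification using only $J^2=-1$, so $n(f(x))-N(f)(x)$ commutes with $J$; combined with $N(f)(x)\in\cc_J$ this gives that $n(f(x))$ commutes with $J$, hence with $x$.

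The routine parts are the two symbolic manipulations in the second paragraph; the genuine obstacle is the third paragraph, where one must not conflate $n(f(x))$ with $N(f)(x)$. The key insight is that, although these two functions differ, their difference always lands in the centralizer of $J$, so the commutativity forced on $N(f)(x)$ by the slice-preserving hypothesis propagates to $n(f(x))$. Notably, only the slice-preserving half of the tameness hypothesis is used.
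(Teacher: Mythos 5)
Your proposal is correct and follows essentially the same route as the paper: both parts reduce to the same direct computations, namely expanding $n(a^{-1}Ja)$ via associativity and the commutation of $n(a)$ with $J$, and expanding $n(f(x))=(a_1+Ja_2)(a_1^c-a_2^cJ)$ against formula~\eqref{decomposednormal} to transfer the slice-preserving property of $N(f)$ to the pointwise norm. The only differences are cosmetic: the paper gets $t(J_1)=0$ from $n(J_1)=1$ together with $J_1^2=-1$ rather than by direct substitution, and it verifies $[J,n(f(x))]=0$ directly instead of routing through the (always $J$-central) difference $n(f(x))-N(f)(x)$; your observation that only the slice-preserving half of tameness is used is also implicit in the paper's argument.
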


\begin{proof}
Let $x=\alpha+\beta J \in \Omega$ be such that $f(x)$ is invertible. Then $f(x)^{-1} x f(x) = \alpha + \beta J_1$ with $J_1:=f(x)^{-1} Jf(x)$. If $\beta=0$, the assertion is evident. Now suppose $\beta \neq 0$. Our first goal is to prove that, under the hypothesis that $n(f(x))$ commutes with $x$ (or, equivalently, with $J$), $J_1\in\s_A$. By direct computation,
\[
n(J_1)=-f(x)^{-1}Jn(f(x))J(f(x)^c)^{-1}=f(x)^{-1}n(f(x))(f(x)^c)^{-1}=1.
\]
Since $J_1^2=f(x)^{-1} J^2f(x)=-1$, it follows that $J_1^c=-J_1$. Hence, $t(J_1)=0$.

Our second goal is proving that $n(f(x))$ commutes with $x$ if $f$ is tame. In order to do so, we decompose the value $f(x) = f(\alpha+\beta J)$ as $a_1+Ja_2$, where $a_1 = \vs  f(x), a_2 = \beta  f'_s(x)$. By formula~\eqref{decomposednormal}, $N(f)(x) = n(a_1)-n(a_2)+J t(a_1a_2^c)$ where $n(a_1)-n(a_2)$ and $t(a_1a_2^c)$ are both slice preserving because $f$ is tame. Now,
\[
n(f(x)) = (a_1+Ja_2)(a_1^c-a_2^cJ) = n(a_1) - J n(a_2) J - a_1a_2^c J + J a_2 a_1^c
\]
so that
\[
Jn(f(x)) - n(f(x))J = J (n(a_1)-n(a_2)) - J t(a_1a_2^c)J - (n(a_1)-n(a_2)) J - t(a_1a_2^c) = 0.
\]
\end{proof}

Let us restate our result on the product of tame functions in the associative setting, adding a formula for reciprocals and quotients. Both formulas were previously known for slice regular quaternionic functions, which are always tame (see~\cite[Theorem 3.4 and Proposition 5.32]{librospringer}).

\begin{theorem} \label{fg-associative}
Suppose $A$ is associative. Let $f,g \in \mc{S}(\OO)$ with $f$ tame. Then
\begin{equation} \label{productformula}
(f \cdot g) (x) = f(x) g(f(x)^{-1} x f(x)),
\end{equation}
whenever $f(x)$ is invertible. Moreover, for all $x \in \OO\setminus V(N(f))$ such that $f^c(x)$ is invertible, if we set $T_f(x) := f^c(x)^{-1}x f^c(x)$ then $f(T_f(x))$ is invertible and
\begin{equation}
(f^{-\punto} \cdot g) (x) = f(T_f(x))^{-1} g(T_f(x)).
\end{equation}
Furthermore, the map $T_f:\{x \in \OO \, | \, f^c(x) \mbox{ is invertible}\} \lra \{x \in \OO \, | \, f(x) \mbox{ is invertible}\}$ is invertible and its inverse is $T_{f^c}$.
\end{theorem}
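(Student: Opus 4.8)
The plan is to derive everything from the associative product formula of Corollary~\ref{formula_associative_case} together with Lemma~\ref{lem:T_g}, constantly exploiting that $N(f)$, being slice preserving when $f$ is tame, takes central values that are constant on each sphere. First I would prove \eqref{productformula}. Fix $x\in\OO\setminus\rr$ with $f(x)$ invertible. Since $f$ is tame, Lemma~\ref{lem:T_g} applies: $n(f(x))$ commutes with $x$, so $f(x)^{-1}xf(x)\in\s_x$, and the third bullet of Corollary~\ref{formula_associative_case} gives $(f\cdot g)(x)=f(x)\,g(f(x)^{-1}xf(x))$. For $x\in\OO\cap\rr$ the inner conjugation is trivial and $(f\cdot g)(x)=f(x)g(x)$, so the formula persists there too.

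The engine for the remaining two parts is the identity $N(f)(x)=f^c(x)\,f(T_f(x))$, which I would obtain by applying \eqref{productformula} to the product $f^c\cdot f$. Since $f$ is tame so is $f^c$ (because $N(f^c)=N(f)$), whence $f\cdot f^c=f^c\cdot f=N(f)$; thus whenever $f^c(x)$ is invertible, Lemma~\ref{lem:T_g} gives $T_f(x)=f^c(x)^{-1}xf^c(x)\in\s_x$ and \eqref{productformula} yields $N(f)(x)=f^c(x)\,f(T_f(x))$. On $\OO\setminus V(N(f))$, where $N(f)(x)$ is invertible and central, this forces $f(T_f(x))=f^c(x)^{-1}N(f)(x)$ to be invertible, with $f(T_f(x))^{-1}=N(f)(x)^{-1}f^c(x)$.

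For the quotient formula I would first record that $f^{-\punto}$ is tame on $\OO'=\OO\setminus V(N(f))$: by Proposition~\ref{reciprocal}, $f^{-\punto}=N(f)^{-\punto}\cdot f^c$ with $N(f)^{-\punto}$ slice preserving, and \eqref{Nfg} gives $N(f^{-\punto})=N(f)^{-\punto}=N((f^{-\punto})^c)$, which is slice preserving. Applying \eqref{productformula} to $f^{-\punto}\cdot g$ at a point where $f^c(x)$ is invertible (equivalently $f^{-\punto}(x)=N(f)(x)^{-1}f^c(x)$ is invertible), the central factor $N(f)(x)$ drops out of the conjugation, so $f^{-\punto}(x)^{-1}xf^{-\punto}(x)=f^c(x)^{-1}xf^c(x)=T_f(x)$; combining this with $f^{-\punto}(x)=f(T_f(x))^{-1}$ from the key identity produces $(f^{-\punto}\cdot g)(x)=f(T_f(x))^{-1}g(T_f(x))$, and along the way reconfirms that $f(T_f(x))$ is invertible.

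Finally, for the bijection I would work on $\OO\setminus V(N(f))$. The key identity already shows that $T_f$ maps $\{x:f^c(x)\text{ invertible}\}$ into $\{x:f(x)\text{ invertible}\}$, staying inside $\OO\setminus V(N(f))$ since $N(f)(T_f(x))=N(f)(x)$. Writing $T_{f^c}(y)=f(y)^{-1}yf(y)$ and substituting $f(T_f(x))^{-1}=N(f)(x)^{-1}f^c(x)$, the chain $T_{f^c}(T_f(x))=f(T_f(x))^{-1}\,T_f(x)\,f(T_f(x))$ collapses to $N(f)(x)^{-1}xN(f)(x)=x$ by centrality of $N(f)(x)$; the symmetric computation (using $f^c$ tame, so $N(f)(x)=f(x)f^c(T_{f^c}(x))$) gives $T_f\circ T_{f^c}=\mathrm{id}$. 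I expect the main obstacle to be precisely the well-definedness of $T_f$ into the set where $f$ is invertible: the tame quaternionic function $f(x)=\im(x)-i$ on $\hh$ has $f^c(j)=i+j$ invertible yet $N(f)(j)=0$ and $T_f(j)=i$ with $f(i)=0$, so invertibility of $N(f)(x)$ is indispensable and the clean mutually-inverse statement genuinely lives on $\OO\setminus V(N(f))$. Establishing the identity $N(f)(x)=f^c(x)f(T_f(x))$ and faithfully tracking this restriction is the delicate heart of the argument.
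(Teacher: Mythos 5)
Your argument is correct and its skeleton is the paper's: formula \eqref{productformula} follows from Corollary~\ref{formula_associative_case} together with Lemma~\ref{lem:T_g}; the key identity $N(f^c)(x)=f^c(x)\,f(T_f(x))$ is obtained by applying \eqref{productformula} to $f^c\cdot f$; and the bijectivity collapses via $T_{f^c}(T_f(x))=\left(f^c(x)f(T_f(x))\right)^{-1}x\left(f^c(x)f(T_f(x))\right)=N(f^c)(x)^{-1}\,x\,N(f^c)(x)=x$. The only real divergence is in the quotient formula: the paper writes $f^{-\punto}\cdot g=N(f^c)^{-\punto}\cdot f^c\cdot g$ and evaluates $(f^c\cdot g)(x)=f^c(x)\,g(T_f(x))$ directly, whereas you first verify that $f^{-\punto}$ is itself tame on $\OO\setminus V(N(f))$ and apply \eqref{productformula} to it. Both work; the paper's version saves the extra tameness verification, yours makes the mechanism (conjugation by $f^{-\punto}(x)$ equals conjugation by $f^c(x)$) more transparent.

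Two phrasings need repair, though neither hides a genuine gap. First, the values of a slice preserving function are neither central in $A$ nor constant on spheres: $N(f)(\alpha+\beta I)=\vs{N(f)}(x)+\beta I\,{N(f)}'_s(x)$ depends on $I$ unless ${N(f)}'_s(x)=0$. What your computations actually use is only that $N(f)(x)$ lies in $\cc_J$ together with $x$ and hence commutes with $x$ --- which is true, so every cancellation you perform survives. Second, and for the same reason, the justification ``$N(f)(T_f(x))=N(f)(x)$'' is not literally correct; the right reason that $T_f(x)\notin V(N(f))$ is that $V(N(f))$ is circular (Lemma~\ref{slicepreservinginverse}) and $T_f(x)\in\s_x$. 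Finally, your closing observation is accurate and worth recording: for $f(x)=\im(x)-i$ on $\hh$ one has $f^c(j)=i+j$ invertible, $N(f)(j)=0$, $T_f(j)=i$ and $f(i)=0$, so the ``Furthermore'' clause genuinely requires restricting to $\OO\setminus V(N(f))$; the paper's proof of that clause invokes the invertibility of $N(f^c)(x)=f^c(x)f(T_f(x))$ at precisely that point, so the restriction is implicit there as well, and your version makes it explicit.
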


\begin{proof}
The first statement immediately follows from the previous results.
As for the second one, we have that $f^{-\punto} \cdot g= N(f^c)^{-\punto} \cdot f^c \cdot g$ in $\OO\setminus V(N(f))$. Since $f$ and $f^c$ are tame,
\begin{align*}
(f^{-\punto} \cdot g) (x) &= N(f^c)(x)^{-1} (f^c \cdot g)(x)\\
&= (f^c \cdot f)(x)^{-1} (f^c \cdot g)(x)\\
&= (f^c(x) f(T_f(x)))^{-1} f^c(x) g(T_f(x))\\
&= f(T_f(x))^{-1} f^c(x)^{-1} f^c(x) g(T_f(x))\\
&= f(T_f(x))^{-1} g(T_f(x)),
\end{align*}
whenever $f^c(x)$ is invertible. The invertibility of $f(T_f(x))$ follows from that of $N(f^c)(x) = f^c(x) f(T_f(x))$ and that of $f^c(x)$.

Finally, if $f^c(x)$ is invertible and $y:=T_f(x)$, then $f(y)$ is invertible and
\begin{align*}
T_{f^c}(T_f(x)) &=f(y)^{-1}yf(y)=f(y)^{-1}\left(f^c(x)^{-1}xf^c(x)\right)f(y)=\\
&=\left(f^c(x)f(y)\right)^{-1}x\left(f^c(x)f(y)\right).
\end{align*}
Now, $f^c(x)f(y)=f^c(x)f(T_f(x))=(f^c\cdot f)(x)=N(f^c)(x)$ where the value $N(f^c)(x)$ commutes with $x$ owing to the fact that $N(f^c)$ is slice preserving. Hence $T_{f^c}(T_f(x))=x$.
\end{proof}

   
\section{Zeros of slice and slice regular functions}\label{sec:zeros}

The zero sets of slice regular functions are extremely interesting on all finite dimensional division algebras. Indeed, the discreteness of the zeros of holomorphic complex functions yields a peculiar structure for the zero sets of slice regular quaternionic functions: see~\cite[Chapter 3]{librospringer} for a full account and appropriate references. The situation is even more manifold over the octonions,~\cite{ghiloni}. The new approach introduced in~\cite{perotti} makes it possible to address the case of other alternative $^*$-algebras $A$ and to generalize part of the results to slice functions that are not regular. While in~\cite{perotti} this was done under the additional hypothesis of \emph{admissibility} for $f \in \mc{S}(\OO)$ (that is, $\langle \vs f(x),f'_s(x) \rangle \subseteq N_A$ for all $x \in \OO \setminus \R$), we try to draw here a general picture, taking advantage of the representations we obtained in Section~\ref{sec:explicit_formulas}. At the end of the present section, we will focus again on slice regular functions.
   
\begin{theorem}\label{zeros}
Let $f \in \mc{S}(\OO)$ and let $x \in \OO\setminus \rr$.
\begin{enumerate}
           \item If $f'_s(x)=0$ then one of the following holds:
           \begin{enumerate}
               \item $\s_x$ is included in $V(f),V(f^c),V(N(f))$ and $V(N(f^c))$.
               \item $\s_x$ does not intersect $V(f)$ nor $V(f^c)$. Moreover, it does not intersect $V(N(f))$ nor $V(N(f^c))$, unless $A$ is singular and $\vs f(x)$ (which is the constant value of $f_{|_{\s_x}}$) or $\vs f(x)^c$ belong to $n^{-1}(0)^*$.
         \end{enumerate}
            \item If $f'_s(x)$ is a right zero divisor then one of the following holds:
           \begin{enumerate}
               \item $\s_x$ intersects $V(f)$ at at least one point $y$ and the intersection is the set of all $y' \in \s_x$ such that
               \begin{equation}\label{morezeros}
      (\im(y')-\im(y)) f'_s(x)= 0.
      \end{equation}
               Moreover, $y^c \not \in V(f)$ and $y^c \in V(N(f^c))$. The equality
               	\begin{equation}\label{associatorderivative}
(f'_s(x), f'_s(x)^c,y)=0
\end{equation}

is equivalent to $\s_x \subseteq V(N(f^c))$ and to $y \in V(N(f))$. If this is the case then $\s_x \subseteq V(N(f))$ if, and only if, the commutator $[n(f'_s(x)),y]$ vanishes.
 \item $\s_x$ does not intersect $V(f)$.
\end{enumerate}
 \item If $f'_s(x)$ is neither $0$ nor a right zero divisor then one of the following holds:           
\begin{enumerate}
 \item $\s_x$ intersects $V(f)$ at exactly one point $y$ and $y^c \in V(N(f^c))$.\\
 If $f'_s(x)$ is invertible, then $y=\re(x)-\vs f(x) f'_s(x)^{-1}$.\\
 Condition~\eqref{associatorderivative} is equivalent to $\s_x \subseteq V(N(f^c))$ and to $y \in V(N(f))$. If this is the case, then $\s_x \subseteq V(N(f))$ if, and only if, $[n(f'_s(x)),y]=0$.\\ In particular, if $f'_s(x)\in C_A^*$ then $\s_x$ is included both in $V(N(f))$ and in $V(N(f^c))$ and, additionally, $\s_x\cap V(f^c)= \{f'_s(x)^{-1}y^c f'_s(x)\}$.
\item $\s_x$ does not intersect $V(f)$. If $f'_s(x)\in C_A^*$, then $\s_x$ does not intersect $V(f^c)$ and it cannot be included both in $V(N(f))$ and in $V(N(f^c))$. If, moreover, $\vs f(x)\in C_A^*$ (or if $A$ is compatible), then $\s_x$ is not included in $V(N(f))$ nor in $V(N(f^c))$.
           \end{enumerate}
       \end{enumerate}
      Finally, if $x \in \OO \cap \rr$ then $\s_x=\{x\}$ has property {\it 1.(a)} or property {\it 1.(b)}.
   \end{theorem}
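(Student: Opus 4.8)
The plan is to reduce every assertion to the behaviour of $f$ on the single sphere $\s_x$ and to translate ``being a zero'' into an algebraic equation in $A$. The starting point is the decomposition $f=\vs f+\im\cdot f'_s$: since $\vs f$ and $f'_s$ are constant on $\s_x$, writing $a:=\vs f(x)$ and $b:=f'_s(x)$ we get, for every $y'\in\s_x$,
\[
f(y')=a+\im(y')\,b,
\]
and, by \eqref{decomposedconjugate} and \eqref{decomposednormal}, the companion formulas $f^c(y')=a^c+\im(y')\,b^c$, together with $N(f)(y')=\big(n(a)+\im(y')^2n(b)\big)+\im(y')\,t(ab^c)$ and the analogous expression for $N(f^c)$ obtained by replacing $a,b$ with $a^c,b^c$. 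As $y'$ ranges over $\s_x$ the imaginary part $\im(y')$ ranges over $\{\beta I: I\in\s_A\}$, and each such value lies in $Q_A^*\subseteq C_A^*$, hence is invertible with $\im(y')^2=-\beta^2\in\rr$. These four formulas are the only input needed, and the whole proof becomes a discussion of the equation $\im(y')\,b=-a$ according to the algebraic nature of $b$.

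First I would dispose of \emph{Case 1}, where $b=0$. Then $f\equiv a$ and $f^c\equiv a^c$ on $\s_x$, so $\s_x\subseteq V(f)$ iff $a=0$ and $\s_x\cap V(f)=\emptyset$ iff $a\neq0$, and the same dichotomy holds for $f^c$ because conjugation is bijective; this yields \emph{1.(a)} and the first half of \emph{1.(b)}. For the normal functions one uses $N(f)(y')=n(a)$ and $N(f^c)(y')=n(a^c)$ (constant on $\s_x$): these vanish exactly when $a$ or $a^c$ lies in $n^{-1}(0)^*$, which forces the singularity caveat in \emph{1.(b)}. The final real case $x\in\OO\cap\rr$ is subsumed here: then $\s_x=\{x\}$, $f(x)=\vs f(x)=a$, and the same direct inspection of $a$ and of $n(a),n(a^c)$ places $\{x\}$ in \emph{1.(a)} or \emph{1.(b)}.

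For \emph{Cases 2 and 3}, $b\neq0$. If some $y\in\s_x$ is a zero then $a=-\im(y)b$, whence $f(y')=(\im(y')-\im(y))b$ for all $y'\in\s_x$; this is precisely \eqref{morezeros}, and it shows the intersection $\s_x\cap V(f)$ is the solution set of that equation. If $b$ is not a right zero divisor (Case 3) the equation forces $\im(y')=\im(y)$, so the zero is unique; when $b$ is moreover invertible, solving $\im(y)b=-a$ gives $y=\re(x)-ab^{-1}=\re(x)-\vs f(x)f'_s(x)^{-1}$. If $b$ is a right zero divisor (Case 2) the solution set may be larger, giving \emph{2.(a)}; here $a\neq0$ necessarily, since $a=0$ would force $\im(y)b=0$ with $\im(y)$ invertible, i.e.\ $b=0$ by Lemma~\ref{nonassociative}, a contradiction, and then $f(y^c)=a+\im(y^c)b=2a\neq0$, proving $y^c\notin V(f)$. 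In both cases, ``$\s_x$ meets $V(f)$'' versus ``it does not'' is the split between subcases (a) and (b).

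The delicate part, and the main obstacle, is the translation of the normal-function statements into the associator condition \eqref{associatorderivative} and the commutator condition $[n(b),y]=0$. Substituting $a=-\im(y)b$ into the formula for $N(f)(y)$ and simplifying with the Moufang identity \eqref{moufang3} (to obtain $n(\im(y)b)=-\im(y)\,n(b)\,\im(y)$), with the antisymmetry and cyclic invariance of the associator, and with Remark~\ref{imaginaryassociator}, I expect the spherical-value and spherical-derivative parts to combine into
\[
N(f)(y)=-2\,\im(y)\,(b,b^c,y),
\]
so that $y\in V(N(f))$ is equivalent to \eqref{associatorderivative}; the same bookkeeping gives $t(a^cb)=(b,b^c,y)$, which identifies \eqref{associatorderivative} with the vanishing of the spherical derivative of $N(f^c)$ and hence, once the spherical-value part is shown to vanish as well, with $\s_x\subseteq V(N(f^c))$, and yields $y^c\in V(N(f^c))$ unconditionally. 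Passing from ``$y\in V(N(f))$'' to ``$\s_x\subseteq V(N(f))$'' then amounts to killing the remaining spherical-derivative term $t(ab^c)=[n(b),y]$, i.e.\ to the commutator condition. Finally, the refinements under $f'_s(x)\in C_A^*$ (or $\vs f(x)\in C_A^*$, or $A$ compatible) follow by feeding Theorem~\ref{centralproduct} into these identities: membership in $C_A^*$ makes $(b,b^c,y)$ vanish and $n(b)$ central for free, forcing $\s_x$ into both $V(N(f))$ and $V(N(f^c))$ and, via Remark~\ref{preservedsphere}, identifying $\s_x\cap V(f^c)$ with $\{b^{-1}y^cb\}$. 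Keeping the signs and the non-associative bracketings straight throughout these manipulations is where the real work lies.
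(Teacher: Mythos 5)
Your reduction is the same as the paper's: on $\s_x=\alpha+\beta\s_A$ write $f(\alpha+\beta I)=a_1+Ia_2$ with $a_1=\vs f(x)$, $a_2=\beta f'_s(x)$, and analyze the equation $(I-J)a_2=0$ once a zero $y=\alpha+\beta J$ exists. Your key identities are correct and coincide with the paper's computations: substituting $a_1=-Ja_2$ and using the Moufang identity~\eqref{moufang3} and Remark~\ref{imaginaryassociator}, one gets $n(a_1)-n(a_2)=J[J,n(a_2)]$ and $t(a_1a_2^c)=-2(J,a_2,a_2^c)-[J,n(a_2)]$, hence $N(f)(y)=-2J(J,a_2,a_2^c)$, which is exactly your $N(f)(y)=-2\,\im(y)\,(b,b^c,y)$ after the rescaling $a_2=\beta b$ and the cyclic invariance of the alternating associator; likewise $t(a_1^ca_2)=(J,a_2,a_2^c)$ gives your $t(a^cb)=(b,b^c,y)$ and the unconditional $N(f^c)(y^c)=0$. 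So cases \emph{1}, \emph{2.(a)}--\emph{(b)}, \emph{3.(a)} and the real case go through as in the paper (your argument $f(y^c)=2a\neq0$ for $y^c\notin V(f)$ is the paper's $-2Ja_2\neq0$ in different clothing).

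The genuine gap is case \emph{3.(b)}. Every identity you derive is conditional on the existence of a zero $y\in\s_x$, so your machinery only yields the forward implications; but \emph{3.(b)} makes converse claims when $\s_x\cap V(f)=\emptyset$: that $\s_x$ cannot lie in both $V(N(f))$ and $V(N(f^c))$ if $f'_s(x)\in C_A^*$, and in neither if moreover $\vs f(x)\in C_A^*$ or $A$ is compatible. Your closing sentence (``feeding Theorem~\ref{centralproduct} into these identities'') cannot reach these statements, because there is no $y$ to substitute. The paper argues by contradiction: from $\s_x\subseteq V(N(f))$ it extracts $n(a_1)=n(a_2)$ and $t(a_1a_2^c)=0$, forms the candidate $U:=-a_1a_2^{-1}$, checks $t(U)=0$ directly, and then needs $n(U)=n(a_1)n(a_2)^{-1}=1$ --- a multiplicativity of the norm which fails in a general alternative $^*$-algebra and is precisely what Proposition~\ref{compatiblecasenorm} (compatibility) or Theorem~\ref{centralproduct}, case \emph{4} (applicable because $\s_x\subseteq V(N(f^c))$ forces $a_1\in C_A$) supplies; then $U\in\s_A$ and $f(\alpha+\beta U)=0$, contradicting the hypothesis. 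This construction is the missing idea, and it is why those extra hypotheses appear in the statement at all (Examples~\ref{ex:singular} and~\ref{ex:R4} show they cannot be dropped). Two smaller omissions in the same vein: the claim $\s_x\cap V(f^c)=\emptyset$ in \emph{3.(b)}, and the \emph{uniqueness} half of $\s_x\cap V(f^c)=\{f'_s(x)^{-1}y^cf'_s(x)\}$ in \emph{3.(a)}, both follow by applying your case-\emph{3} analysis to $f^c$ (noting $(f^c)'_s(x)=f'_s(x)^c\in C_A^*$), not from Remark~\ref{preservedsphere}, which only gives membership of the conjugated point in $\s_x$.
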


\begin{proof}
If $x\in\alpha+\beta\s_A$ with $\alpha, \beta \in \rr$ and $\beta \neq0$, then the value $f(\alpha+\beta I)$ can be decomposed as $a_1+Ia_2$ for all $I \in \s_A$ where $a_1 = \vs  f(x), a_2 = \beta  f'_s(x)$. Moreover, $f^c(\alpha+\beta I) = a_1^c+Ia_2^c$ and, according to formula~\eqref{decomposednormal}, $N(f)(\alpha+\beta I) = n(a_1)-n(a_2)+I t(a_1a_2^c), N(f^c)(\alpha+\beta I) = n(a_1^c)-n(a_2^c)+I t(a_1^ca_2)$ for all $I \in \s_A$.

If $f(y) =0$ with $y=\alpha + J \beta$ then $a_1 = -Ja_2$ so that (using the third Moufang identity~\eqref{moufang3}):
\begin{align*}
n(a_1)-n(a_2) &= -(Ja_2)(a_2^cJ) - n(a_2) = -Jn(a_2)J + JJ n(a_2) = J [J,n(a_2)]\\
t(a_1a_2^c) &= -(Ja_2)a_2^c + a_2(a_2^cJ)=-(J,a_2,a_2^c) - Jn(a_2) + n(a_2)J - (a_2,a_2^c,J)\\ 
& = -2(J,a_2,a_2^c)-[J,n(a_2)].
\end{align*}
Hence $N(f)(y)= n(a_1)-n(a_2)+Jt(a_1a_2^c) = -2J(J,a_2,a_2^c)$. Moreover, if the latter equals $0$, then $N(f)$ vanishes identically on $\s_x$ if, and only if, $[J,n(a_2)]=0$.
We also compute (using Remark~\ref{imaginaryassociator})
\begin{align*}
n(a_1^c)-n(a_2^c) &= -(a_2^cJ)(Ja_2) + a_2^c(JJa_2) = -(a_2^c,J,Ja_2) \\
&= -(J,Ja_2,a_2^c) = J(J,a_2,a_2^c)\\
t(a_1^ca_2) &= (a_2^cJ)a_2 -a_2^c(Ja_2) = (a_2^c,J,a_2) = (J,a_2,a_2^c).
\end{align*}
Thus, $N(f^c)(y^c) = n(a_1^c)-n(a_2^c) -J t(a_1^ca_2)=0$ and $N(f^c)$ vanishes identically on $\s_x$ if, and only if, $(J,a_2,a_2^c)=0$.

We now consider the three different possibilities for $a_2$.
       
\begin{enumerate}
\item If $a_2=0$ then $f(\alpha+\beta I) = a_1, f^c(\alpha+\beta I) = a_1^c, N(f)(\alpha+\beta I) = n(a_1)$ and $N(f^c)(\alpha+\beta I) = n(a_1^c)$ for all $I \in \s_A$. If $a_1=0$ then $\s_x$ is included in $V(f),V(f^c),V(N(f))$ and $V(N(f^c))$; if, on the contrary, $a_1\neq0$ then $f$ and $f^c$ never vanish in $\s_x$ (while $V(N(f))$ and $V(N(f^c))$ either include or do not intersect $\s_x$ depending on whether or not $n(a_1)=0$ and $n(a_1^c)=0$). 
\item If $a_2$ is a right zero divisor and $y=\alpha + J \beta$ is a zero of $f$ (so that $a_1=-Ja_2$) then $\s_x \cap V(f) = \{\alpha + I\beta \,|\, a_1 + Ia_2 = 0\}= \{\alpha + I\beta \,|\, (I-J)a_2 = 0\}$. This set cannot include $y^c=\alpha-J\beta$, as $-2Ja_2 \neq 0$.
\item If $a_2\neq 0$ is not a right zero divisor and $y=\alpha + J \beta$ is a zero of $f$ (so that $a_1=-Ja_2$), then $\s_x \cap V(f) = \{y\}$ since $(I-J)a_2 = 0 \Leftrightarrow I=J$. If, furthermore, $a_2$ is invertible then $J = -a_1 a_2^{-1}$ by direct computation. If $a_2$ is actually in $C_A^*$, then Theorem~\ref{centralproduct} and Remark~\ref{preservedsphere} apply and we can make use of the fact that, for all $J\in \s_A,a \in C_A^*$, we have $a^{-1}Ja \in \s_A, (a,a^c,J)=0, [J,n(a)]=0$:
\begin{itemize}
\item If $f(y) = a_1+Ja_2 = 0$ then for $K:= -a_2^{-1}Ja_2 \in \s_A$ we have $f^c(\alpha+\beta K) = a_1^c+Ka_2^c= a_1^c - a_2^{-1}Jn(a_2) = a_1^c - a_2^cJ = f(y)^c=0$. Moreover, in this case $(a_2,a_2^c,J)=0$ and $[J,a_2]=0$ imply that $N(f),N(f^c)$ vanish identically in $\s_x$ by the first part of the proof.
\item If $0=N(f)(\alpha+\beta I) =n(a_1)-n(a_2)+It(a_1a_2^c)$ for all $I \in \s_A$ then $n(a_1)=n(a_2)$ and $t(a_1a_2^c) = 0$. If $a_2 \in C_A^*$ then $U:=-a_1a_2^{-1} = -n(a_2)^{-1}a_1a_2^c$ has $t(U) = -n(a_2)^{-1}t(a_1a_2^c) = 0$. If, moreover, $n(U) = n(a_1) n(a_2)^{-1} = 1$ then automatically $U\in \s_A$ and $f(\alpha+\beta U) = a_1+Ua_2=0$. The desired equality $n(U) = n(a_1) n(a_2)^{-1}$ holds true if $A$ is compatible (see Proposition~\ref{compatiblecasenorm}) or if $a_1$ also belongs to $C_A$ (see case {\it 4} of Theorem~\ref{centralproduct}). The latter is the case if we put the additional hypothesis that $0 = N(f^c)(\alpha+\beta I) =n(a_1^c)-n(a_2^c)+It(a_1^ca_2)$ for all $I \in \s_A$.
\end{itemize}
\end{enumerate}

Finally, when $\beta=0$ and $x=\alpha \in \rr$ the computations are analogous to those performed in point 1.
\end{proof}

Example~\ref{ex:Delta} is an instance of case {\it 1.(a)}. Here is an example of the possible pathology in case {\it 1.(b)}. 

\begin{example}\label{ex:singular}
In $A=\s\oo$, the constant function $f \equiv 1+l$ belongs to $\mc{SR}(Q_A)\subseteq\mc{S}(Q_A)$. By direct computation, $N(f) = N(f^c) \equiv 0$, while $V(f) = V(f^c) = \emptyset$. A similar example can be constructed in any singular $^*$-algebra $A$.
\end{example}

In cases {\it 2.(a)}, {\it 3.(a)}, if $A$ is one of the algebras $\cc,\hh,\s\hh,\D\cc,\oo,\s\oo,\D\hh,\rr_3$, there cannot be examples with $\s_x \not \subseteq V(N(f))$ because the norm $n$ has its values in the center. However, such counterexamples exist in higher-dimensional Clifford algebras:

\begin{example}\label{ex:R4}
In $A=\rr_4$ (hence in all $\rr_m$ with $m\geq4$), for any $t \in \rr^*$ the first-degree polynomial $f(x) = (x-e_4)(1+te_{123})=x(1+te_{123})-{e_4}+t{e_{1234}}$ belongs to $\mc{SR}(Q_A)\subseteq\mc{S}(Q_A)$ and it has a zero at $e_4\in \s_A$. By direct computation, $n(1+te_{123})=(1+te_{123})^2 = 1+t^2+2te_{123}$. Hence
\[
N(f^c)(x)= (1+te_{123})\cdot (x+e_4)\cdot (x-e_4)\cdot (1+te_{123})= (x^2+1)(1+t^2+2te_{123})
\]
so that $\s_A \subseteq V(N(f^c))$, while
\begin{align*}
N(f)(x)&= (x-e_4)\cdot (1+t^2+2te_{123}) \cdot (x+e_4)\\
&= (1+t^2)(x^2+1) + 2t (x-e_4)\cdot(x-e_4) e_{123}\\ 
&= (x^2+1)(1+t^2+2te_{123})+ 4t (x-e_4)e_{1234}
\end{align*}
(where we have taken into account $(x-e_4)\cdot(x-e_4)= x^2-2xe_4-1 = x^2+1-2(x-e_4)e_4$). Thus, $\s_A\cap V(N(f)) = \{e_4\}$.

For the sake of completeness, we observe that $\s_A\cap V(f^c) = \emptyset$. Indeed, by Theorem~\ref{zeros}, $\s_A\cap V(f^c)$ must be included in $\s_A\cap V(N(f))^c= \{-e_4\}$ and it is easy to verify that $f^c(x) = x(1+te_{123})+e_4+te_{1234}$ does not vanish at $-e_4$.
\end{example}

The class of functions just mentioned, with $f$ and $f^c$ swapped, provides examples of cases {\it 2.(b)} and {\it 3.(b)} where the zero set of the normal function and of the conjugate function are not empty. Here is another pathological example of case {\it 2.(b)}.

\begin{example}\label{ex:conjugate}
In $A=\s\oo$, the function $f(x) = i - li + x (1- l )$ belongs to $\mc{SR}(Q_A)\subseteq\mc{S}(Q_A)$ and it has no zeros. Indeed, if it vanished at $x=q+lw$ with $q,w \in \hh$ then \[0 = i - li + (q+lw) (1- l )= i+q-w^c + l (-i-q^c +w)\] 
would imply the contradictory equalities $w=i+q^c,w^c=i+q$. On the other hand, $f^c(x) = -i + li + x (1+l )$ vanishes at $x=q+lw$ (with $q,w \in \hh$) if, and only if,
\[0 = -i + li + (q+lw) (1+l ) = q+w^c-i + l (q^c+w+i),\]
that is, if and only if $w=-i-q^c$. Therefore, $V(f^c) =Q_A \cap \{q-l(i+q^c)\, |\, q \in \hh\}$. By direct computation, $N(f)=N(f^c) \equiv 0$.
\end{example}

We complete the panorama with an example of case {\it 1.(b)} taking place at real points.

\begin{example}\label{ex:singular2}
Let $A = \s\hh$ or $A=\s\oo$ and let $a \in A$ have $n(a)=0$ while $a \neq 0$. Then for $f(x) = x-a$, the normal function $N(f)(x)=N(f^c)(x) = x^2 - x t(a)+0=x (x-t(a))$ vanishes at $0$ and at $t(a)$ while $f$ and $f^c$ have no zeros in $Q_A$.
\end{example}

We now come to some consequences of Theorem~\ref{zeros}.

\begin{corollary}\label{corollaryzeros}
Let $f \in \mc{S}(\Omega)$.
\begin{enumerate}
 \item The inclusion $V(N(f^c))^c \supseteq V(f)$ holds.
 \item If $f'_s(x) \in C_A$ for every $x \in \Omega \setminus \R$, then for every $x \in \Omega$ the sets $\s_x \cap V(f)$ and $\s_x \cap V(f^c)$ are both empty, both singletons, or both equal to $\s_x$. Moreover,
 \[
 V(N(f)) \supseteq \bigcup_{x \in V(f)}\s_x=\bigcup_{x \in V(f^c)}\s_x \subseteq V(N(f^c)).
 \]
 If, additionally, $A$ is nonsingular, then 
 \[
 \bigcup_{x \in V(f)}\s_x =\bigcup_{x \in V(f^c)}\s_x = \bigcup_{\s_x \subseteq V(N(f))\cap V(N(f^c))}\s_x,
 \]
whence $\bigcup_{x \in V(f)}\s_x=\bigcup_{x \in V(f^c)}\s_x=V(N(f))=V(N(f^c))$ if $f$ is also tame.
 \item Suppose that $f_s^\circ(x) \in C_A$ for every $x \in \Omega$ and $f'_s(x) \in C_A$ for every $x \in \Omega \setminus \R$. Then
 \[
\bigcup_{\s_x \subseteq V(N(f))}\s_x=\bigcup_{x \in V(f)}\s_x=\bigcup_{x \in V(f^c)}\s_x=\bigcup_{\s_x \subseteq V(N(f^c))}\s_x,
 \]
whence $V(N(f))=\bigcup_{x \in V(f)}\s_x=\bigcup_{x \in V(f^c)}\s_x=V(N(f^c))$ if $f$ is also tame and $V(N(f))=V(f)=V(f^c)=V(N(f^c))$ if $f$ is slice preserving.
\end{enumerate}
\end{corollary}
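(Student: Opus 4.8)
The plan is to derive the whole corollary from the trichotomy of Theorem~\ref{zeros}, exploiting the one structural consequence of the standing hypotheses: by Theorem~\ref{centralproduct} an element of $C_A$ is either $0$ or invertible with central norm, hence is \emph{never} a proper zero divisor. Thus, on each sphere $\s_x$ with $x\in\Omega\setminus\R$, the assumption $f'_s(x)\in C_A$ rules out case~\textit{2} and confines us to case~\textit{1} (when $f'_s(x)=0$) or to case~\textit{3} with the stronger information $f'_s(x)\in C_A^*$ (when $f'_s(x)\neq0$). Two bookkeeping remarks let me run every argument for $f$ and for $f^c$ at once: the cone $C_A$ is closed under conjugation, and by formula~\eqref{decomposedconjugate} one has $(f^c)'_s=(f'_s)^c$ and $\vs{(f^c)}=(\vs f)^c$, so both hypotheses transfer verbatim from $f$ to $f^c$.

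For part~\textit{1} I would simply read off the assertion $y^c\in V(N(f^c))$ that already appears in cases~\textit{1.(a)}, \textit{2.(a)} and \textit{3.(a)} whenever $y\in\s_x\cap V(f)$ with $x\notin\R$; real zeros are handled directly, since $f(y)=0$ forces $N(f^c)(y)=n(f(y)^c)=0$. This gives $V(f)\subseteq V(N(f^c))^c$ with no hypothesis on $f'_s$. For the trichotomy in part~\textit{2} I combine the two admissible cases: case~\textit{1} yields $\s_x\subseteq V(f)$ and $\s_x\subseteq V(f^c)$ together (sub-case \textit{(a)}) or neither (sub-case \textit{(b)}), while case~\textit{3} with $f'_s(x)\in C_A^*$ gives in \textit{(a)} that $\s_x\cap V(f)=\{y\}$ and $\s_x\cap V(f^c)=\{f'_s(x)^{-1}y^cf'_s(x)\}$ are both singletons, and in \textit{(b)} that both are empty. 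Hence $\s_x$ meets $V(f)$ exactly when it meets $V(f^c)$, which yields $\bigcup_{x\in V(f)}\s_x=\bigcup_{x\in V(f^c)}\s_x$; the inclusions into $V(N(f))$ and $V(N(f^c))$ follow because in cases~\textit{1.(a)} and \textit{3.(a)} with $f'_s(x)\in C_A^*$ the whole sphere lies in both normal zero sets (real zeros again give $N(f)(x)=n(0)=0$).

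For the nonsingular refinement I prove the reverse inclusion $\bigcup_{\s_x\subseteq V(N(f))\cap V(N(f^c))}\s_x\subseteq\bigcup_{x\in V(f)}\s_x$ by contradiction: a sphere contained in both normal zero sets but missing $V(f)$ must fall in case~\textit{1.(b)}, excluded because nonsingularity keeps $\s_x$ out of $V(N(f))$, or in case~\textit{3.(b)} with $f'_s(x)\in C_A^*$, excluded because such a sphere cannot lie in both $V(N(f))$ and $V(N(f^c))$; real points are disposed of since $n(f(x))=0$ forces $f(x)=0$ in a nonsingular algebra. Tameness then makes $N(f)=N(f^c)$ slice preserving, so $V(N(f))=V(N(f^c))$ is circular and equals the intersection, collapsing the chain to the stated equalities. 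Part~\textit{3} follows the same skeleton with the extra hypothesis $\vs f(x)\in C_A$ now feeding the stronger clauses of cases~\textit{1.(b)} and \textit{3.(b)}: when $\vs f(x)\in C_A^*$ these guarantee $\s_x$ avoids $V(N(f))$ \emph{alone} rather than merely the intersection, and the degenerate sub-case $\vs f(x)=0$ is settled by the direct computation that $N(f)$ restricts to the constant $-n(a_2)$ on $\s_x$, with $a_2=\beta f'_s(x)\in C_A^*$ invertible. Running this for $f$ and $f^c$ gives the fourfold equality; the tame specialization follows from circularity of $V(N(f))=V(N(f^c))$, and the slice-preserving one because there $f^c=f$ and $N(f)=f^2$ restricts to a square in each field $\cc_J$, so that $V(N(f))=V(f)$.

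The main obstacle I anticipate is exactly this boundary bookkeeping: the clauses of Theorem~\ref{zeros} are phrased for $f'_s(x)\in C_A^*$ and $\vs f(x)\in C_A^*$, whereas the present hypotheses only give membership in $C_A=\{0\}\cup C_A^*$. The genuinely degenerate configurations, namely $f'_s(x)=0$ inside case~\textit{1.(b)} and $\vs f(x)=0$ inside case~\textit{3.(b)}, are not literally covered and must be closed by the short explicit norm computations above; keeping track of which of the four sets $V(f),V(f^c),V(N(f)),V(N(f^c))$ each degenerate configuration can and cannot meet is where the care is needed.
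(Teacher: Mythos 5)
Your proof is correct and follows exactly the route the paper intends: the corollary is stated there without proof as a direct consequence of Theorem~\ref{zeros}, and your case analysis (ruling out case \emph{2} because a nonzero element of $C_A$ is invertible and hence never a proper zero divisor, then reading off cases \emph{1} and \emph{3} for $f$ and $f^c$ simultaneously via $(f^c)'_s=(f'_s)^c$) is the expected argument. The degenerate configurations you flag ($f'_s(x)=0$ inside \emph{1.(b)} and $\vs f(x)=0$ inside \emph{3.(b)}) are indeed the only points not literally covered by the theorem's $C_A^*$ clauses, and your short norm computations close them correctly.
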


We now draw some conclusions in the compatible case, exploiting Theorem~\ref{zeros} and Proposition~\ref{compatiblecasenorm}. The statement is sharp because of the aforementioned examples.

\begin{corollary}\label{corollaryzeroscompatible}
Suppose $A$ is compatible and let $f \in \mc{S}(\Omega)$.
\begin{enumerate}
 \item  The inclusions
 \[
 V(N(f^c)) \supseteq \bigcup_{x \in V(f)}\s_x
 \quad \text{and} \quad
 V(N(f)) \supseteq V(f).
 \]
 hold.
 \item If $A$ is nonsingular and $f'_s(x) \in C_A$ for every $x \in \Omega \setminus \R$, then for every $x \in \Omega$ the sets $\s_x \cap V(f)$ and $\s_x \cap V(f^c)$ are both empty, both singletons, or both equal to $\s_x$. Moreover,
 \[
\bigcup_{\s_x \subseteq V(N(f))}\s_x=\bigcup_{x \in V(f)}\s_x=\bigcup_{x \in V(f^c)}\s_x=\bigcup_{\s_x \subseteq V(N(f^c))}\s_x.
 \]
\end{enumerate}
\end{corollary}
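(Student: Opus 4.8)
The plan is to derive both statements from Theorem~\ref{zeros}, using as the single essential new ingredient the fact that in a compatible algebra the associator $(x,x^c,y)$ vanishes identically (part {\it 2} of the Theorem preceding Proposition~\ref{compatiblecasenorm}). I fix $x=\alpha+\beta J\in\Omega\setminus\R$ and adopt the notation of the proof of Theorem~\ref{zeros}, writing $a_1=\vs f(x)$ and $a_2=\beta f'_s(x)$, so that a point $\alpha+\beta I$ of $\s_x$ lies in $V(f)$ exactly when $a_1+Ia_2=0$. Setting $x=a_2$, $y=J$ in the cited theorem gives $(a_2,a_2^c,J)=0$, and the alternating property turns this into $(J,a_2,a_2^c)=0$; up to the nonzero real factor $\beta^{-2}$ this is precisely condition~\eqref{associatorderivative} evaluated at $a_2=\beta f'_s(x)$. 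Thus \emph{condition~\eqref{associatorderivative} holds automatically} whenever $A$ is compatible, and this is the fact that drives everything.

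First I would prove part {\it 1}. Real zeros $y\in V(f)\cap\R$ are immediate, since $N(f)(y)=n(f(y))=0$ and $N(f^c)(y)=n(f(y)^c)=0$. For a zero $y=\alpha+\beta J$ with $\beta\neq0$, the identities established inside the proof of Theorem~\ref{zeros} read $N(f)(y)=-2J(J,a_2,a_2^c)$, while $N(f^c)$ vanishes on all of $\s_y$ if and only if $(J,a_2,a_2^c)=0$. Since compatibility makes this associator vanish, I obtain at once $N(f)(y)=0$, whence $V(N(f))\supseteq V(f)$, and $\s_y\subseteq V(N(f^c))$, whence $V(N(f^c))\supseteq\bigcup_{x\in V(f)}\s_x$. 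This is exactly part {\it 1}.

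For part {\it 2} the hypothesis $f'_s(x)\in C_A$ forces $a_2\in C_A$, so $a_2$ is either $0$ or invertible by Theorem~\ref{centralproduct}; in particular $a_2$ is never a nonzero right zero divisor, and case {\it 2} of Theorem~\ref{zeros} simply cannot occur. I would then run through the surviving cases to establish the trichotomy. In case {\it 1.(a)} both $\s_x\cap V(f)$ and $\s_x\cap V(f^c)$ equal $\s_x$; in case {\it 1.(b)} nonsingularity removes the exceptional clause, so both intersections are empty. In case {\it 3} with $a_2\in C_A^*$ the theorem yields, in subcase {\it (a)}, $\s_x\cap V(f)=\{y\}$ and $\s_x\cap V(f^c)=\{f'_s(x)^{-1}y^cf'_s(x)\}$, two singletons, and in subcase {\it (b)}, using that $A$ is compatible, $\s_x$ meets neither $V(f)$ nor $V(f^c)$. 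Real $x$ are handled by $f^c(x)=f(x)^c$. This proves the trichotomy for every $x\in\Omega$.

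The remaining task, and the only genuinely delicate point, is the chain of four equalities, which amounts to checking that the \emph{same} spheres feed all four unions. Reading off the surviving cases, a sphere $\s_x$ is saturated by $V(f)$ (equivalently by $V(f^c)$) precisely in cases {\it 1.(a)} and {\it 3.(a)}, and these are exactly the cases in which Theorem~\ref{zeros} gives $\s_x\subseteq V(N(f))$ and $\s_x\subseteq V(N(f^c))$. The converse inclusions are the crux: I must exclude spheres contained in $V(N(f))$ or $V(N(f^c))$ that fail to meet $V(f)$. Such spheres could only arise from cases {\it 1.(b)} and {\it 3.(b)}, and here the combined use of nonsingularity (in case {\it 1.(b)}) and of compatibility (the final clause of case {\it 3.(b)}) guarantees that $\s_x$ is contained in neither $V(N(f))$ nor $V(N(f^c))$. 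With these cases excluded, all four unions collect exactly the spheres of cases {\it 1.(a)} and {\it 3.(a)}, and therefore coincide.
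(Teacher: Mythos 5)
Your proof is correct and follows exactly the route the paper intends (the corollary is stated without an explicit proof, as an immediate consequence of Theorem~\ref{zeros} together with compatibility): the single driving observation, that compatibility forces $(f'_s(x),f'_s(x)^c,y)=0$ and hence makes condition~\eqref{associatorderivative} automatic, is precisely what reduces both parts to a reading of the cases of Theorem~\ref{zeros}, with nonsingularity eliminating the exceptional clause of case \emph{1.(b)} and compatibility eliminating the normal-function pathology of case \emph{3.(b)}. Nothing further is needed.
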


We now take advantage of the characterization of the zero divisors available for $\s\oo$ and we complete the description of the zero sets in this special case. Before stating our result, we point out that, if $A=\s\oo$, then $N(f)=N(f^c)$ for all $f \in \mc{S}(\OO)$, owing to the next remark.

\begin{remark}\label{uniquenormal}
If $n(a) = n(a^c)$ and $t(ab) = t(ba)$ for all $a,b \in A$ then for every slice function $f$ the equality $N(f)=N(f^c)$ holds as a consequence of formula~\eqref{decomposednormal}.
\end{remark}

\begin{proposition} \label{prop:SO}
If $A = \s\oo$, if $f \in \mc{S}(\OO)$ and if $x \in \OO$ then one of the following possibilities applies:
\begin{enumerate}
\item $V(f) \cap \s_x = \emptyset$;
\item $V(f) \cap \s_x= \{y\}$; $f'_s(x)\in C_A^*=\s\oo \setminus n^{-1}(0)$ and $y=\re(x)-\vs f(x) f'_s(x)^{-1}$;
\item $V(f) \cap \s_x$ is an affine $2$-plane in $\s\oo\simeq\rr^8$ and $f'_s(x)$ is a zero divisor (i.e., it belongs to $n^{-1}(0)^*$);
\item $V(f) \supseteq \s_x$ and $f'_s(x)=0$.
\end{enumerate}
In each of the aforementioned cases, respectively:
\begin{enumerate}
\item either $V(N(f)) \cap \s_x = \emptyset = V(f^c) \cap \s_x$; or $V(N(f)) \supseteq \s_x$, and $V(f^c) \cap \s_x$ is either empty or $2$-dimensional;
\item $V(N(f)) \supseteq \s_x$ and $V(f^c) \cap \s_x=\{f'_s(x)^{-1}y^c f'_s(x)\}$;
\item $V(N(f)) \supseteq \s_x$ and $V(f^c) \cap \s_x$ is either empty or $2$-dimensional; 
\item $\s_x$ is included both in $V(f^c)$ and in $V(N(f))$.
\end{enumerate}
\end{proposition}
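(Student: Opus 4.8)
The plan is to deduce the statement from Theorem~\ref{zeros} after translating its trichotomy on $f'_s(x)$ into the language special to $A=\s\oo$. First I would record three facts. Since the centre of $\s\oo$ is $\rr$ and $n$ is $\rr$-valued with $n(a)=n(a^c)$, Proposition~\ref{invertibles} shows that $a$ is invertible exactly when $n(a)\neq0$; hence a nonzero element is invertible iff it lies in $C_A^*=\s\oo\setminus n^{-1}(0)$, and the remaining nonzero elements, those in $n^{-1}(0)^*$, are precisely the (left and right) zero divisors. Moreover $N(f)=N(f^c)$ by Remark~\ref{uniquenormal}. With this dictionary the conditions $f'_s(x)=0$, $f'_s(x)\in n^{-1}(0)^*$ and $f'_s(x)\in C_A^*$ are exactly cases $1$, $2$, $3$ of Theorem~\ref{zeros}, and possibilities $(1),(2),(4)$ here are immediate transcriptions: when $f'_s(x)=0$ the function equals the constant $a_1:=\vs f(x)$ on $\s_x$, giving $(4)$ if $a_1=0$ and feeding $(1)$ otherwise; when $f'_s(x)\in C_A^*$, case $3$ of Theorem~\ref{zeros} yields $(2)$ with the stated formula for $y$ and for $\s_x\cap V(f^c)$. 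Real points $x\in\OO\cap\rr$ are handled as in the last line of Theorem~\ref{zeros}.

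The new content is possibility $(3)$: when $a_2:=\beta f'_s(x)\in n^{-1}(0)^*$ (with $x=\alpha+\beta J$, $\beta\neq0$) and $V(f)\cap\s_x\neq\emptyset$, I claim the intersection is an affine $2$-plane of $\s\oo\simeq\rr^8$. Fixing a zero $y=\alpha+\beta J$, so that $a_1=-Ja_2$, formula~\eqref{morezeros} gives
\[
V(f)\cap\s_x=\{\alpha+\beta I : I\in\s_A,\ (I-J)a_2=0\}=\{y+\beta u : u\in\ker R_{a_2},\ J+u\in\s_A\},
\]
where $R_{a_2}$ is right multiplication by $a_2$. The key structural lemma is that $\ker R_{a_2}=\{w : wa_2=0\}$ is a $4$-dimensional totally isotropic subspace for the norm form, which has signature $(4,4)$ by Example~\ref{so} and hence Witt index $4$. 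Total isotropy is immediate: if $wa_2=0$ with $a_2\neq0$ and $w$ were invertible, then $a_2=w^{-1}(wa_2)=0$ by Lemma~\ref{nonassociative}.1, a contradiction; so $n(w)=0$, whence $\dim\ker R_{a_2}\le4$. The reverse bound uses multiplicativity of $n$: from $(wa_2)a_2^c=w\,n(a_2)=0$ (an alternative-algebra identity since $a_2^c\in\rr-a_2$) the image of $R_{a_2}$ lies in $\ker R_{a_2^c}$, and applying total isotropy to the isotropic element $a_2^c$ gives $\mathrm{rank}\,R_{a_2}\le4$, i.e. $\dim\ker R_{a_2}\ge4$.

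Granting the lemma, I would linearise $J+u\in\s_A$. Writing $b(v,w)=t(vw^c)$ for the polarisation of $n$, total isotropy gives $n(u)=0$ and $b(u,u')=0$ on $\ker R_{a_2}$, so the conditions $t(J+u)=0$ and $n(J+u)=1$ collapse to the two \emph{linear} constraints $t(u)=0$ and $b(J,u)=0$ on the $4$-dimensional $\ker R_{a_2}$. Because $n$ is positive definite on $\cc_J=\langle1,J\rangle$, this plane meets the totally isotropic space $\ker R_{a_2}$ only at $0$; as $\ker R_{a_2}$ is maximal isotropic and hence self-orthogonal, this shows simultaneously that $Ja_2\neq0$ (so $a_1\neq0$) and that the functionals $u\mapsto t(u)=b(u,1)$ and $u\mapsto b(J,u)$ are $\rr$-independent on $\ker R_{a_2}$, cutting it down to a $2$-dimensional space $W$. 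Thus $V(f)\cap\s_x=y+\beta W$ is an affine $2$-plane.

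Finally I would read off the assertions on $N(f)$ and $V(f^c)$. By~\eqref{decomposednormal}, on $\s_x$ one has $N(f)(\alpha+\beta I)=\big(n(a_1)-n(a_2)\big)+I\,t(a_1a_2^c)$ with real coefficients; in possibility $(3)$, $n(a_2)=0$ together with $a_1=-Ja_2$ and multiplicativity give $n(a_1)=n(J)n(a_2)=0$ and $t(a_1a_2^c)=-t\big(J\,n(a_2)\big)=0$, so $\s_x\subseteq V(N(f))$ \emph{unconditionally}, bypassing~\eqref{associatorderivative}. Since $(f^c)'_s(x)=f'_s(x)^c$ is again nonzero isotropic, $f^c$ also falls under $(1)$ or $(3)$, so $\s_x\cap V(f^c)$ is empty or a $2$-plane. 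The same real-coefficient computation shows that in possibility $(1)$ the set $V(N(f))\cap\s_x$ is either empty or all of $\s_x$; combining this with $N(f)=N(f^c)$ and with case $3$.(b) of Theorem~\ref{zeros} (for the invertible subcase) yields the stated dichotomy, and possibility $(4)$ is trivial since $f\equiv0$ on $\s_x$. I expect the rank computation establishing $\dim\ker R_{a_2}=4$ to be the main obstacle, as it is the single place where the split structure of $\s\oo$, rather than formal manipulation, is essential.
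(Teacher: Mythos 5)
Your proof is correct, and its skeleton is the paper's: reduce everything to Theorem~\ref{zeros} after identifying the invertible elements of $\s\oo$ with $C_A^*=\s\oo\setminus n^{-1}(0)$ and the zero divisors with $n^{-1}(0)^*$, then use the real-valuedness of $t$ and $n$ (so $N(f)$ is slice preserving and vanishes on all of $\s_x$ or not at all) together with $(f^c)'_s(x)=f'_s(x)^c\in n^{-1}(0)^*\Leftrightarrow f'_s(x)\in n^{-1}(0)^*$ to read off the second list. Where you genuinely diverge is in the central geometric lemma, the affine-$2$-plane structure of $V(f)\cap\s_x$ in case~\emph{3}. The paper works in the Cayley--Dickson presentation $\s\oo=\hh+l\hh$: it solves $(a+lb)(c+ld)=0$ explicitly, obtaining the $4$-space $S_{c,d}=\{a+lb \mid a\in\hh,\ b=-c^{-1}a^cd\}$, intersects the translate $w+lq+S_{c,d}$ with $H^6$, and asserts that the resulting linear condition $\re(wa^c+q^cc^{-1}a^cd)=0$ on $a\in\im(\hh)$ is a nonzero functional. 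You instead argue structurally: the kernel of right multiplication $R_{a_2}$ is totally isotropic (a nonzero element of $\s\oo$ with nonzero norm is invertible), hence of dimension at most the Witt index $4$ of the split form of signature $(4,4)$, while $(wa_2)a_2^c=w\,n(a_2)=0$ pushes the image of $R_{a_2}$ into the totally isotropic $\ker R_{a_2^c}$ and gives the matching lower bound; then the conditions $t(J+u)=0$ and $n(J+u)=1$ linearize on this isotropic $4$-space to $t(u)=0=b(J,u)$, whose independence follows from $\ker R_{a_2}=(\ker R_{a_2})^{\perp}$ together with the positive-definiteness of $n$ on $\cc_J$. Each route buys something: the paper's computation yields an explicit parametrization of the $2$-plane, whereas yours is coordinate-free, actually \emph{proves} the non-degeneracy that the paper only asserts for its linear functional, makes transparent why the answer is exactly a codimension-$2$ subspace of a $4$-dimensional kernel, and would transfer to other split composition algebras. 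Your direct verification that $n(a_1)=0$ and $t(a_1a_2^c)=0$ when $a_1=-Ja_2$ with $a_2$ isotropic, replacing the appeal to the associator condition~\eqref{associatorderivative}, is likewise a legitimate shortcut made available by compatibility of $\s\oo$.
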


\begin{proof}
We saw that a split-octonion $c+l d \in \s\oo = \hh + l \hh$ (with $c,d \in \hh$) is a zero divisor if, and only if, its norm $n(c+l d) = n(c)-n(d)$ vanishes (while $c,d \neq 0$). All other nonzero elements are invertible. If $c+l d$ is indeed a zero divisor then, by direct computation, the solutions $a+l b$ of
\begin{equation*}
0=(a+l b)(c+l d) = ac+db^c + l (a^cd+cb)
\end{equation*}
form the real $4$-space $S_{c,d} = \{a+l b \, |\, a \in \hh, b=-c^{-1}a^c d\}$.

Hence, solving~\eqref{morezeros} when $f'_s(x) = c+l d$ is a zero divisor is equivalent to finding those $y' \in \s_x$ such that $\im(y') - \im(y) \in S_{c,d}$.

We saw that if $A = \s\oo$ then
\begin{equation*}
\s_A = H^6 = \left\{w+l q \, | \,w \in \im(\hh), q \in \hh, n(w)-n(q)=1\right\}.
\end{equation*}
Therefore, $y$ decomposes as $y = \alpha + \beta (w + l q)$ for some $\alpha,\beta \in \rr, w \in \im(\hh), q \in \hh$ with $n(w)-n(q)=1$ and we only have to find those $L \in H^6$ such that $L- (w + l q) \in S_{c,d}$. These form the set
\begin{equation*}
H^6 \cap (w + l q + S_{c,d}) = \{(w+a)+l (q-c^{-1}a^cd) \, |\, a \in \im(\hh), \re(wa^c+q^cc^{-1}a^cd) = 0\},
\end{equation*}
where the function $a \mapsto \re(wa^c+q^cc^{-1}a^cd)$ from $\im(\hh)$ to $\rr$ is $\rr$-linear and is not identically $0$. Hence, $H^6 \cap (w + l q + S_{c,d})$ is a real affine $2$-plane. Along with Theorem~\ref{zeros}, this proves the first statement.

The second statement follows from Theorem~\ref{zeros} if we take into account the compatibility of $\s\oo$, along with two other properties valid in $\s\oo$. The first is, $(f'_s(x))^c \in n^{-1}(0)^*$ if, and only if, $f'_s(x) \in n^{-1}(0)^*$. The second is, that $t$ and $n$ are real valued, so that (by formula~\eqref{decomposednormal}) $N(f)$ is a slice preserving function.
\end{proof}

Examples~\ref{ex:singular} and~\ref{ex:conjugate} prove that Proposition~\ref{prop:SO} is sharp. Another special case, that of $\rr_3$, can be treated. In this case, too, Remark~\ref{uniquenormal} holds.

\begin{proposition} \label{prop:R_3}
If $A = \rr_3$, if $f \in \mc{S}(\OO)$ and if $x \in \OO$ then one of the following happens:
\begin{enumerate}
\item $V(f) \cap \s_x=\emptyset$;
\item $V(f) \cap \s_x=\{y\}$, $f'_s(x)\in C_A^*$ and $y=\re(x)-\vs f(x) f'_s(x)^{-1}$; 
\item $V(f) \cap \s_x=\{y,z\}$ for some $z \not \in \{y,y^c\}$ that commutes with $y$; $f'_s(x)$ is a zero divisor; and $(y-z)f'_s(x) = 0$; 
\item $V(f) \supseteq \s_x$ and $f'_s(x)=0$.
\end{enumerate}
In each of the aforementioned cases, respectively:
\begin{enumerate}
\item $\s_x$ does not intersect $V(f^c)$ nor $V(N(f))$;
\item $\s_x \subseteq V(N(f))$ and $V(f^c) \cap \s_x=\{f'_s(x)^{-1}y^c f'_s(x)\}$;
\item $\s_x \subseteq V(N(f))$ and $V(f^c) \cap \s_x=\{h^{-1}y^ch, h^{-1}z^ch\}$, where $h \in \rr_2^*$ is such that $f'_s(x)=(1\pm e_{123})h$; 
\item $\s_x$ is included both in $V(f^c)$ and in $V(N(f))$.
\end{enumerate}
\end{proposition}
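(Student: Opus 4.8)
The plan is to handle $\rr_3$ in the same spirit as $\s\oo$ in Proposition~\ref{prop:SO}, feeding the explicit description of $\rr_3$ from Examples~\ref{ex:lowerdimensional} into the trichotomy of Theorem~\ref{zeros}. The cleanest device is the pair of central idempotents $p_\pm=\tfrac12(1\pm e_{123})$: since $e_{123}$ is central with $e_{123}^2=1$ and $e_{123}^c=e_{123}$, they split $\rr_3$ as a $^*$-algebra into $p_+\rr_3\oplus p_-\rr_3\cong\hh\oplus\hh$, each factor carrying quaternionic conjugation, and they realise $\s_A$ as $S^2\times S^2$. Because $p_\pm$ lie in the centre of $\mc{S}(\OO)$, every $f\in\mc{S}(\OO)$ decomposes into a pair $(f^{(1)},f^{(2)})$ of $\hh$-valued slice functions, and so do $\vs f$, $f'_s$, $f^c$ and $N(f)$; a point $\alpha+\beta I\in\s_x$ with $I=(I_+,I_-)$ is a zero of $f$ exactly when $I_+$ annihilates $f^{(1)}$ and $I_-$ annihilates $f^{(2)}$ on the respective factor $2$-spheres. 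This reduces each clause to the classical quaternionic spherical-zero dichotomy applied one factor at a time.

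The case split then comes from reading off the nature of $f'_s(x)$, using that the zero divisors of $\rr_3$ are exactly $(1\pm e_{123})\rr_2^*=2p_\pm\rr_2^*$, i.e. those elements with one vanishing component. Hence $f'_s(x)\in C_A^*$ iff both $(f^{(1)})'_s(x)$ and $(f^{(2)})'_s(x)$ are invertible in $\hh$, giving the single spherical zero $y=\re(x)-\vs f(x)f'_s(x)^{-1}$ of case~2 through Theorem~\ref{zeros}; $f'_s(x)=0$ iff both factor derivatives vanish, which is case~4; and $f'_s(x)\in\mathfrak{Z}$ iff exactly one factor derivative vanishes. In this last situation the factor with invertible derivative contributes a unique spherical zero, while the spherically constant factor either never vanishes or vanishes throughout its factor-sphere. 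Concretely, writing $f'_s(x)=(1\pm e_{123})h$ with $h\in\rr_2^*$, I would solve \eqref{morezeros} by placing $\im(y')-\im(y)$ in the kernel $(1\mp e_{123})\rr_3$ of right multiplication by $(1\pm e_{123})h$, and then isolate the two zeros $y$ and $z$ that commute with one another, checking $z\notin\{y,y^c\}$ and $(y-z)f'_s(x)=0$.

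For the statements about $f^c$ and $N(f)$ I would use that $\rr_3$ is associative, hence compatible, so that Remark~\ref{uniquenormal} yields $N(f)=N(f^c)$ and the relevant clauses of Theorem~\ref{zeros} apply: the presence of a zero of $f$ on $\s_x$ forces $\s_x\subseteq V(N(f))$, while $V(f^c)\cap\s_x$ is the image of $V(f)\cap\s_x$ under the conjugation twist $w\mapsto f'_s(x)^{-1}w^c f'_s(x)$ of case~3.(a); in the zero-divisor case this twist is carried by the invertible quaternionic factor $h$, whence the points $h^{-1}y^c h$ and $h^{-1}z^c h$. Case~1 (with $\s_x$ missing $V(f)$, $V(f^c)$ and $V(N(f))$) is then immediate from the nonsingularity of $\rr_3$, which forbids the spherical values to lie in $n^{-1}(0)^*$. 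I expect the real obstacle to be exactly the zero-divisor case: since the factor analysis already shows the constant factor vanishes on a whole factor-sphere, the delicate point is to determine precisely which points of $S^2\times S^2$ solve \eqref{morezeros} and to verify that the commutativity requirement selects exactly the pair $\{y,z\}$ (and its twisted image for $f^c$). This bookkeeping between the two idempotent components is where the work concentrates; everything else descends from the quaternionic theory factorwise and from the general Theorem~\ref{zeros}.
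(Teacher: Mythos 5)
Your reduction of $\rr_3$ to $\hh\oplus\hh$ via the central idempotents $p_\pm=\tfrac12(1\pm e_{123})$ is correct and is a genuinely different route from the paper's (which manipulates the zero divisors $(1\pm e_{123})h$ directly and transports a computation for $u=e_1$ by conjugation). But the step you yourself flag as ``the real obstacle'' is not merely delicate: it cannot be carried out, and your own factorwise analysis shows why. By Theorem~\ref{zeros}, case \emph{2.(a)}, when $f'_s(x)$ is a zero divisor and $y\in V(f)\cap\s_x$, the intersection $V(f)\cap\s_x$ \emph{equals} the full solution set of~\eqref{morezeros}; commutativity with $y$ is asserted in the statement as a property of the alleged second zero, not as a membership condition, so nothing ``selects exactly the pair $\{y,z\}$'' out of that set. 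Writing $f'_s(x)=(1+e_{123})h=2p_+h$ with $h\in\rr_2^*$ invertible, equation~\eqref{morezeros} for $y'=\alpha+\beta I$, $y=\alpha+\beta J$ reads $(I-J)p_+=0$, i.e.\ it constrains only the $p_+$-component of $I$; since $\s_{\rr_3}\cong S^2\times S^2$ under your splitting, the solution set is $\{J_+\}\times S^2$, a $2$-sphere, not a two-point set. Concretely, $f(x)=(x-e_1)(1+e_{123})=x(1+e_{123})-e_1+e_{23}$ belongs to $\mc{SR}(Q_{\rr_3})$, has $f'_s\equiv 1+e_{123}$, and vanishes at every $I\in\s_{\rr_3}$ with $p_+I=p_+e_1$: besides $e_1$ and $-e_{23}$ this includes, for instance, $I=\tfrac12(e_1+e_2-e_{13}-e_{23})$, which one checks directly lies in $\s_{\rr_3}$ and satisfies $I(1+e_{123})=e_1-e_{23}$. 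So case \emph{3} of the statement as printed is false, and the accompanying description of $V(f^c)\cap\s_x$ fails with it.

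For what it is worth, the paper's own proof goes astray at the corresponding spot: from ``$(v-e_1)a_2=0$ if and only if $v-e_1=(1\mp e_{123})k$ for some $k\in\rr_2^*$'' it concludes ``that is, $v=\mp e_{23}$'', whereas the affine $4$-space $e_1+(1\mp e_{123})\rr_2$ meets $\s_{\rr_3}$ in a $2$-sphere, of which $\mp e_{23}$ is merely the point commuting with $e_1$. The statement that your decomposition actually proves --- and the one you should aim for, in the spirit of Proposition~\ref{prop:SO} --- replaces case \emph{3} by: $f'_s(x)$ is a zero divisor and $V(f)\cap\s_x$ is a $2$-sphere, namely the $p_\mp$-factor sphere sitting over the unique zero of the $p_\pm$-component; likewise $V(f^c)\cap\s_x$ is then empty or a $2$-sphere, and $\s_x\subseteq V(N(f))$. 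Each quaternionic factor of $f$ contributes, on its factor $S^2$, either nothing, one point, or everything, and $V(f)\cap\s_x$ is the product of these contributions; organizing the four cases around that product structure gives a correct and complete proof. Everything else in your outline (cases \emph{1}, \emph{2}, \emph{4}, the use of Remark~\ref{uniquenormal} and of compatibility for the $N(f)$ and $f^c$ clauses) is sound.
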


\begin{proof}
If $x\in\alpha+\beta\s_{\rr_3}$, then $f(\alpha+\beta I)=a_1+Ia_2$ for each $I \in \s_{\R_3}$, where $a_1 = \vs  f(x), a_2 = \beta  f'_s(x)$.
By Theorem~\ref{zeros}, the statement concerning $V(f)$ will be proven if we establish that: when $a_2$ is a zero divisor $(1\pm e_{123})h$ (with $h \in \rr_2^*$), for all $u \in \s_{\rr_3}$ there exists a unique $v \in \s_{\rr_3} \setminus\{\pm u\}$ (which commutes with $u$) such that $(v-u)a_2= 0$.
We first suppose that $u = e_1$: $(v-e_1)a_2= 0$ holds if and only if $v-e_1 = (1\mp e_{123}) k$ for some $k \in \rr_2^*$, that is, $v = \mp e_{23}$.
More generally, any element $u \in \s_{\rr_3}$ can be obtained as $u = a^{-1}e_1a$ for some invertible $a \in\rr_3$. Using the same technique, we prove that $(v-u)a_2= 0$ if, and only if, $v =\mp a^{-1}e_{23}a$ and the thesis follows.

Now, for $y = \alpha+\beta J \in V(f) \cap \s_x$ and $a_2=(1\pm e_{123})h$ with $h \in \rr_2^*\subseteq C_{\rr_3}$ we have that $y':=-h^{-1}yh$ belongs to $\s_x$ and to $V(f^c)$. Indeed, $f(y) = a_1+Ja_2=0$ implies that $f^c(y') = a_1^c - h^{-1}Jh a_2^c = a_2^c J - h^{-1}J h h^c (1\pm e_{123}) = a_2^cJ - h^c(1\pm e_{123})J = a_2^cJ - a_2^cJ=0$. Moreover, in such a case $\s_x\subseteq V(N(f))$ as a consequence of the compatibility of $\rr_3$ and of Theorem~\ref{zeros}.

By what we already proved, if $V(f) \cap \s_x=\emptyset$ then automatically $V(f^c) \cap \s_x=\emptyset$. If $f'_s(x) \in C_A$ then by Theorem~\ref{zeros} $V(N(f)) \cap \s_x=\emptyset$, too. We conclude the proof by checking that the same holds when $f'_s(x)$ is a zero divisor. We first observe that $V(N(f)) \cap \s_x\neq\emptyset$ if, and only if, $\s_x \subseteq V(N(f))$: this is a consequence of formula $N(f)(\alpha+\beta I) = n(a_1)-n(a_2)+I t(a_1a_2^c)$, since in $\rr_3$ the functions $n,t$ take values in the center $\R+e_{123}\R$ of the algebra. Now, supposing $\s_x \subseteq V(N(f))$, that is, $n(a_1)=n(a_2)$ and $t(a_1a_2^c)=0$ with $a_2=(1\pm e_{123})a, a\in\R^*_2$ we will find a contradiction. The fact that $n(a_1)=n(a_2) =2(1\pm e_{123})n(a)$ implies that $a_1=(1\pm e_{123})b$, with $b\in\R_2^*$ having $n(b)=n(a)$. Setting $K:=-ba^{-1}$ we have that $n(K)=1$ and $0=t(a_1a_2^c)=(2\pm 2e_{123})t(ba^c)$. Then $K\in\s_{\R_3}$ and $f(\alpha+\beta K)=(1\pm e_{123})(b+Ka)=0$, which contradicts the hypothesis $V(f) \cap \s_x=\emptyset$.
 \end{proof}


\subsection{The slice regular case}\label{sec:zerosregular}

We are now ready to generalize a result known as the identity principle for slice regular quaternionic functions,~\cite[Theorem 1.12]{librospringer}, as well as its extensions to a larger class of quaternionic domains,~\cite[Theorem 3.6]{altavilla}; to slice regular octonionic functions,~\cite[Theorem 3.1]{rocky}; to slice monogenic functions,~\cite[Theorem 3.9]{israel}; and to admissible slice regular functions,~\cite[Theorem~20]{perotti}. For $J\in\s_A$, we will use the notations $\cc_J^+=\{\alpha+\beta J\in\cc_J\,:\, \beta>0\}$ and  $\OO_J^+:=\OO\cap\cc_J^+$. We also recall that we defined $|x|:=\sqrt{n(x)}$ for all $x \in Q_A$.

\begin{theorem}\label{discretezeros}
Let $\OO$ be open and let $f\in\mc{SR}(\OO)$. If $\OO$ is a slice domain then one of the following mutually exclusive properties holds:
\begin{enumerate}
\item[1.]
For each $J\in\s_A$, the intersection $V(f)\cap \cc_J$ is closed and discrete in $\OO_J$.
\item[2.]
$f\equiv0$.
\end{enumerate}
If $\OO$ is a product domain then, in addition to cases 1 and 2, there is one further possibility:
\begin{enumerate}
\item[3.] 
There exists $J_0\in\s_A$ such that $J=J_0$ satisfies
\begin{equation}\label{fin}
f_{|_{\OO_{J}^+}} \equiv 0 \mathrm{\ and\ } V(f)\cap\cc_{-J}^+ \mathrm{\ is\ closed\ and\ discrete\ in\ }\OO_{-J}^+.
\end{equation}
The set of $J \in \s_A$ for which~\eqref{fin} holds is determined by the equation $(J-J_0)f'_s \equiv 0$; in particular it reduces to $\{J_0\}$ if $f'_s$ takes at least one value that is neither $0$ nor a right zero divisor.
For all other $K\in \s_A$, the intersection $V(f)\cap\cc_K^+$ is closed and discrete in $\OO_K^+$.
\end{enumerate}
\end{theorem}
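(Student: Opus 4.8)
The plan is to reduce everything to the classical identity principle for holomorphic functions, applied one slice at a time through the splitting of Lemma~\ref{splitting}, and then to propagate the vanishing of $f$ from a single slice to the whole domain by means of the representation formula~\eqref{rep2} (equivalently, of the decomposition $f=\vs f+\im\cdot f'_s$). Fix $J\in\s_A$ with an associated splitting basis, so that $f_{|_{\OO_J}}=\sum_{\ell=0}^h f_\ell J_\ell$ with each $f_\ell\colon\OO_J\to\cc_J$ holomorphic. Since $\{J_\ell,JJ_\ell\}$ is a real basis of $A$, a point $x\in\OO_J$ lies in $V(f)$ exactly when $f_\ell(x)=0$ for every $\ell$; hence $V(f)\cap\cc_J$ is always closed in $\OO_J$ by continuity of $f$, and it fails to be discrete precisely when it accumulates at some point $p$ of a connected component of $\OO_J$. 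In that case every $f_\ell$ has zeros accumulating at $p$, so the classical identity principle forces each $f_\ell\equiv0$, i.e.\ $f$ vanishes identically on that component. This dichotomy is the engine of the argument; the remaining work is to track how vanishing on one component spreads.

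For a slice domain, $\OO_J$ is connected and meets $\rr$. First I would observe that if $V(f)\cap\cc_J$ is not discrete for some $J$, then $f_{|_{\OO_J}}\equiv0$ by the engine above. Since $D$ is conjugation-invariant, every non-real sphere $\s_x\subseteq\OO$ meets $\cc_J$ in the two conjugate points $\alpha\pm\beta J$, which are then both zeros of $f$; applying~\eqref{rep2} with $y=\alpha+\beta J$ and $K=-J$ yields $f\equiv0$ on $\s_x$, hence on all of $\OO\setminus\rr$. As $\OO\setminus\rr$ is dense in $\OO$ and $f$ is continuous, $f\equiv0$ (case~2); if instead no such $J$ exists, then $V(f)\cap\cc_J$ is discrete for every $J$ (case~1). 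The two are mutually exclusive, since $f\equiv0$ makes every $V(f)\cap\cc_J=\OO_J$ non-discrete.

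For a product domain there are no real points and $\OO_J=\OO_J^+\sqcup\OO_J^-$ splits into two components, with $\OO_J^-=\OO_{-J}^+=\OO\cap\cc_{-J}^+$. If case~1 fails, some $V(f)\cap\cc_{K_0}$ accumulates in a component; choosing $J_0\in\{K_0,-K_0\}$ so that the accumulation occurs in $\OO_{J_0}^+$, the engine gives $f_{|_{\OO_{J_0}^+}}\equiv0$. The key computation is that a single vanishing half-slice determines $f$ everywhere: for $x=\gamma+\rho J_0\in\OO_{J_0}^+$ (with $\rho>0$) the decomposition gives $0=\vs f(x)+\rho J_0\,f'_s(x)$, and since $\vs f$ and $f'_s$ are constant on $\s_x$ while every sphere of $\OO$ meets $\OO_{J_0}^+$, for every $L\in\s_A$ one obtains
\begin{equation}\label{halfslice}
f(\gamma+\rho L)=\rho\,(L-J_0)\,f'_s(x).
\end{equation}
If $f'_s\equiv0$ this forces $f\equiv0$ (case~2); otherwise $f_{|_{\OO_{J_0}^-}}\not\equiv0$, so by the engine $V(f)\cap\cc_{-J_0}^+$ is discrete, which together with $f_{|_{\OO_{J_0}^+}}\equiv0$ is precisely~\eqref{fin} for $J=J_0$ (case~3); these three cases are again mutually exclusive.

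Finally, I would read off the remaining assertions of case~3 from~\eqref{halfslice}. Setting $L=J$ shows $f_{|_{\OO_J^+}}\equiv0$ if and only if $(J-J_0)f'_s\equiv0$, the stated equation; if $f'_s$ attains some value that is neither $0$ nor a right zero divisor, this equation forces $J=J_0$, giving the reduction to $\{J_0\}$. For each $J$ satisfying it, evaluating~\eqref{halfslice} at $L=-J$ and using $(J-J_0)f'_s\equiv0$ turns the defining condition of $V(f)\cap\cc_{-J}^+$ into $f'_s=0$; thus, under the parametrization of $\OO_J^-$ by $D^+$, this set corresponds to the same discrete subset of $D^+$ as $V(f)\cap\OO_{J_0}^-$, so~\eqref{fin} holds for $J$, and conversely~\eqref{fin} forces $f_{|_{\OO_J^+}}\equiv0$ and hence the equation. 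For every other $K$ one has $f_{|_{\OO_K^+}}\not\equiv0$, so in the splitting over $K$ some component is nonzero on the connected set $\OO_K^+$ and $V(f)\cap\cc_K^+$ is discrete. I expect the main obstacle to lie in the product-domain bookkeeping: verifying carefully that vanishing on $\OO_{J_0}^+$ pins $f$ down on every sphere through formula~\eqref{halfslice}, and handling the zero-divisor subtleties, where $(J-J_0)f'_s\equiv0$ may hold for a whole family of $J\neq J_0$.
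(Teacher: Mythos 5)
Your proposal is correct and follows essentially the same route as the paper: the splitting Lemma~\ref{splitting} plus the classical identity principle on each connected component of $\OO_J$, propagation via formula~\eqref{rep2} in the slice-domain case, and the identity $f(\gamma+\rho L)=\rho(L-J_0)f'_s$ (the paper's $f(x)=(\im(x)-|\im(x)|J_0)f'_s(x)$) to control the product-domain case. Your write-up is in fact slightly more explicit than the paper's on the closedness of the zero sets and on the equivalence between~\eqref{fin} and the condition $(J-J_0)f'_s\equiv0$, but the underlying argument is the same.
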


\begin{proof}
Assume that property {\it 1} does not hold and let $J_0\in\s_A$ be such that $V(f)\cap \OO_{J_0}$ has an accumulation point $z_0\in\OO_{J_0}$. Let  $f_{|_{\OO_{J_0}}}=\sum_{\ell=0}^h f_\ell J_{\ell}$ be the decomposition given by Lemma~\ref{splitting}. Then each $f_\ell$ has zeros that accumulate to $z_0$. Since $f_\ell$ is holomorphic, it must vanish identically on the connected component of $\OO_{J_0}$ containing $z_0$.
\begin{itemize}
\item If $\OO$ is a slice domain then $\OO_{J_0}$ is connected, whence $f_{|_{\OO_{J_0}}} \equiv 0$.  By formula~\eqref{rep2}, we conclude that $f \equiv 0$.
\item If $\OO$ is a product domain then $\OO_{J_0}$ has two connected components, namely $\OO_{J_0}^+$ and $\OO_{-{J_0}}^+$. If this is the case, then either of the following holds:
\begin{itemize}
\item $f \equiv 0$ in both $\OO_{J_0}^+,\OO_{-{J_0}}^+$, whence $f \equiv 0$ by formula~\eqref{rep2}; 
\item~\eqref{fin} holds for either $J=J_0$ or $J = -J_0$. Without loss of generality, we may suppose that it does for $J = J_0$. The fact that $f(x) = \vs f(x)+ \im(x) f'_s(x)$ vanishes for $x \in \OO_{J_0}^+$ implies that, for all $x \in \Omega$,
\[f(x) = \left(\im(x)-|\im(x)|\,J_0\right) f'_s(x);\]
while the discreteness of $V(f)\cap\cc_{-J_0}^+$ implies the discreteness of $V(f'_s)\cap\cc_{-J_0}^+$. Therefore, the intersection $V(f)\cap\cc_J^+$ is closed and discrete in $\OO_J^+$ for all $J\in \s_A\setminus\{J_0\}$, with the possible exception of those such that $(J-J_0) f'_s(x) \equiv 0$. In order for the last equation to admit a solution $J$, the nonzero values of $f'_s$ must all be right zero divisors in $A$. \qedhere
\end{itemize}
\end{itemize}
\end{proof}

Here are a few examples of case $3.$ in Proposition~\ref{discretezeros}.

\begin{example}[\cite{perotti}, Remark~12]\label{1fin}
Fix $J_0\in\s_A$. Let $f(x)=1+\frac{\im(x)}{|\im(x)|}J_0$ for each $x\in Q_A\setminus\rr$. Then $f$ is slice regular on $Q_A\setminus\rr$ and $V(f)=\cc_{J_0}^+$. Moreover $N(f)=N(f^c)\equiv0$.
\end{example}

Surprisingly, we can construct an example where~\eqref{fin} holds for more than one $J \in \s_A$.

\begin{example}\label{2fins}
Let $A = \rr_3$ and let $f(x)=\left(e_1 - \frac{\im(x)}{|\im(x)|}\right)(1-e_{123})$. Then $f$ is slice regular on $Q_A\setminus\rr$ and $V(f)=\cc_{e_1}^+ \cup \cc_{e_{23}}^+$. An example of $g \in \mc{SR}(Q_A\setminus\rr)$ with the same zero set $\cc_{e_1}^+ \cup \cc_{e_{23}}^+$, but which is not constant along the half-slices $\cc_J^+$, can be constructed following~\cite{altavilla} and letting $g(x)=x\cdot f(x) = x f(x)$.
\end{example}

We are now in a position to characterize the nonsingularity of $\mc{SR}(\OO)$.

\begin{proposition}\label{SRnonsingular}
Assume that $\OO$ is open. The $^*$-algebra $\mc{SR}(\OO)$ is nonsingular if and only if $A$ is nonsingular and $\OO$ is a union of slice domains.
\end{proposition}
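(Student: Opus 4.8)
The plan is to unravel what nonsingularity means here: since the norm of an element $f\in\mc{SR}(\OO)$ is its normal function $N(f)=f\cdot f^c$, the $^*$-algebra $\mc{SR}(\OO)$ is nonsingular exactly when $N(f)\equiv0$ forces $f\equiv0$. I would first reduce to a single connected piece of the domain: writing $D=\bigsqcup_i D_i$ according to the dichotomy recalled at the end of Subsection~\ref{sec:functions}, every $f\in\mc{SR}(\OO)$ restricts to a slice regular function on each $\OO_{D_i}$ and $N(f)$ restricts to the normal function of that restriction, so both the claim and its negation may be tested component by component. Here ``$\OO$ is a union of slice domains'' means that each $D_i$ is of type~1, i.e.\ connected, conjugation-invariant and meeting $\rr$.

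For the ``if'' direction I would assume $A$ nonsingular and $\OO$ a union of slice domains, take $f=\I(F)\in\mc{SR}(\OO)$ with $N(f)\equiv0$, and pass to the inducing holomorphic stem function $F=F_1+\ui F_2:D\to A_\cc$. Since $\I$ is a $^*$-algebra isomorphism, $N(f)\equiv0$ is equivalent to $FF^c\equiv0$ on $D$. Working on one component $D_i$ and evaluating at a real point $\alpha\in D_i\cap\rr$, the stem condition gives $F_2(\alpha)=0$, so $F(\alpha)=F_1(\alpha)\in A$ and $0=(FF^c)(\alpha)=n(F_1(\alpha))$ is a norm computed \emph{inside} $A$; nonsingularity of $A$ then yields $F_1(\alpha)=0$, hence $F(\alpha)=0$. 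Thus $F$ vanishes on the nonempty open subset $D_i\cap\rr$ of $\rr$, which has accumulation points in the connected open set $D_i$, and the identity theorem for holomorphic maps into the finite-dimensional complex vector space $A_\cc$ forces $F\equiv0$ on $D_i$. Summing over the components gives $F\equiv0$ and therefore $f\equiv0$.

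For the ``only if'' direction I would argue by contraposition, exhibiting a nonzero element of $\mc{SR}(\OO)$ with vanishing normal function whenever either hypothesis fails. If $A$ is singular, choose $a\in A\setminus\{0\}$ with $n(a)=0$; the constant function $f\equiv a$ is slice regular, nonzero, and has $N(f)=\I(aa^c)=\I(n(a))\equiv0$. If instead some component $\OO_{D_i}$ is a product domain, I would take $f_0(x)=1+\frac{\im(x)}{|\im(x)|}J_0$ for a fixed $J_0\in\s_A$ (which exists by Assumption~(D)): by Example~\ref{1fin} it is slice regular on $\OO_{D_i}\subseteq Q_A\setminus\rr$, with $N(f_0)\equiv0$ and $f_0\not\equiv0$, and extending it by $0$ on the remaining components of $\OO$ (legitimate, since the components are separated open sets) produces the required nonzero zero-norm element of $\mc{SR}(\OO)$.

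The main obstacle is the ``if'' direction. It is tempting to argue directly on $f$ through the identity principle of Theorem~\ref{discretezeros}, but for a general, possibly non-compatible, algebra one cannot control $V(f)$ by means of $V(N(f))$, since the inclusion $V(f)\subseteq V(N(f))$ may fail; the pathologies of Examples~\ref{ex:conjugate} and~\ref{ex:singular} show exactly this. The decisive move is to lift the problem to the stem function $F$, where nonsingularity of $A$ is invoked \emph{only} at real points --- precisely the points a slice domain is guaranteed to contain --- after which ordinary complex holomorphy propagates the vanishing across the whole connected slice. This also makes transparent why product components break nonsingularity: lacking real points, they admit the nontrivial zero-norm solutions of Example~\ref{1fin}.
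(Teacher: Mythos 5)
Your proposal is correct and follows essentially the same route as the paper: the ``only if'' counterexamples are exactly the paper's (a constant of zero norm for singular $A$, and the function of Example~\ref{1fin} extended by zero across a product component), and the ``if'' direction invokes nonsingularity of $A$ only at real points and then propagates the vanishing by the identity principle. The sole cosmetic difference is that you run that last step upstairs on the holomorphic stem function $F$, whereas the paper works downstairs, noting $N(f)(x)=n(f(x))$ for real $x$ and then citing Theorem~\ref{discretezeros}; the underlying mechanism is identical.
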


\begin{proof}
If $A$ is singular then the algebra $\mc{SR}(\OO)$ is singular because of Example~\ref{ex:singular}.
If $\OO$ includes a product domain $\tilde \OO$ then we may define $f$ as in Example~\ref{1fin} on $\tilde \OO$ and set $f\equiv 0$ on $\OO \setminus \tilde \OO$. In this case we clearly have $f \not \equiv 0$ and $N(f) \equiv 0$.

Now let us suppose that $A$ is nonsingular and that $\OO$ consists only of slice domains. We can prove that $\mc{SR}(\OO)$ is nonsingular by the same argument used in~\cite[Theorem~20]{perotti}: if $N(f)\equiv 0$ then for all $x\in \OO \cap \rr$ we have $0 = N(f)(x) = n(f(x))$, whence $f(x)=0$ by the nonsingularity of $A$. Therefore, $f$ vanishes on the set $\OO \cap \rr$ and, by Theorem~\ref{discretezeros}, we conclude that $f \equiv 0$.
\end{proof}

Now that we have characterized nonsingularity for $\mc{SR}(\OO)$, the following consequence of Proposition~\ref{reciprocal} provides a useful tool. We point out that it is an exact generalization of the construction that is valid for slice regular quaternionic functions,~\cite[\S 5.1]{librospringer}.

\begin{proposition}
If $\OO$ is open and $\mc{SR}(\OO)$ is nonsingular then each element $f$ that is tame and not identically $0$ admits a reciprocal, namely $f^{-\punto} = N(f)^{-1} f^c$, on the open subset $\OO \setminus V(N(f))$ of $Q_A$, where $V(N(f))$ is a closed circular subset of $\OO$ with empty interior.
\end{proposition}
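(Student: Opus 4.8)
The plan is to read the statement through the dictionary provided by Proposition~\ref{SRnonsingular}, which translates the hypothesis that $\mc{SR}(\OO)$ is nonsingular into the two concrete facts that $A$ is nonsingular and that $\OO$ is a union of slice domains. With this in hand, the reciprocal formula itself will be handed to us by Proposition~\ref{reciprocal}, while the structural description of $V(N(f))$ will come from Lemma~\ref{slicepreservinginverse}. The one genuinely new ingredient to be supplied is the fact that $N(f)\not\equiv0$, which is exactly the ``instrument'' declared missing in the discussion following Remark~\ref{dense}.

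First I would record the algebraic features of $N(f)$ that make this machinery applicable: since $f$ is tame, $N(f)$ is slice preserving and coincides with $N(f^c)$, and since $f\in\mc{SR}(\OO)$ its normal function $N(f)$ is again slice regular. Then I would supply the crucial step. In the $^*$-algebra $\mc{SR}(\OO)$ the norm of the element $f$ is precisely $N(f)$, and nonsingularity of $\mc{SR}(\OO)$ means that no nonzero element has vanishing norm; hence $f\not\equiv0$ forces $N(f)\not\equiv0$. In particular $\OO':=\OO\setminus V(N(f))$ is non-empty, so that Proposition~\ref{reciprocal} applies. This is the point at which the obstacle raised after Remark~\ref{dense} is finally removed.

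Next I would extract the structure of $V(N(f))$. As the zero set of the slice preserving function $N(f)$ it is circular by Lemma~\ref{slicepreservinginverse}, and as the zero set of the continuous function $N(f)$ it is closed in $\OO$. For the empty interior I would argue on the slice domains whose union is $\OO$: the algebra of slice regular functions on a single slice domain is again nonsingular by Proposition~\ref{SRnonsingular}, so on each such component $N(f)$ either vanishes identically together with $f$ or is not identically zero, and in the latter case the final assertion of Lemma~\ref{slicepreservinginverse} shows that $V(N(f))$ consists there of isolated real points or isolated spheres $\s_x$, with open and dense complement. Since a sphere $\s_x$ and an isolated real point are nowhere dense in $A$, this yields that $V(N(f))$ has empty interior; equivalently, one may invoke the identity principle of Theorem~\ref{discretezeros} to see that $V(N(f))\cap\cc_J$ is discrete in each slice $\OO_J$.

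Finally I would assemble the reciprocal. The set $\OO'$ is open, being the complement in the open set $\OO$ of the closed set $V(N(f))$. Proposition~\ref{reciprocal} then places $f$ in the central cone $C_{\mc{S}(\OO')}$, yields its invertibility, and gives $f^{-\punto}(x)=N(f)(x)^{-1}f^c(x)$, while the same proposition guarantees that $f^{-\punto}$ is slice regular because $f$ is slice regular on $\OO'$. I expect the main obstacle to be precisely the step that excludes $N(f)\equiv0$: before the characterization of nonsingularity there was no way to rule this out, and it is exactly this characterization that makes the exclusion available, thereby completing the reciprocal construction sketched in Proposition~\ref{reciprocal}.
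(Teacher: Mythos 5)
Your proof is correct and follows exactly the route the paper intends: the paper states this proposition without a separate proof, presenting it as a direct consequence of Proposition~\ref{reciprocal} once the definition of nonsingularity of $\mc{SR}(\OO)$ (which forces $N(f)\not\equiv 0$ whenever $f\not\equiv 0$, removing the obstacle noted after Remark~\ref{dense}) and the structure results of Proposition~\ref{SRnonsingular} and Lemma~\ref{slicepreservinginverse} are in place. The only loose thread is your component-by-component alternative ``$N(f)$ vanishes identically together with $f$'', which you leave open even though it would contradict the empty-interior claim; but in the paper's intended reading $\OO$ is a single slice domain (cf.\ the reduction after Assumption (F) together with Proposition~\ref{SRnonsingular}), so that case cannot occur for $f\not\equiv 0$ and your argument is complete.
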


In the quaternionic and octonionic cases, it was actually possible to prove stronger results on the zeros of slice regular functions: see~\cite[\S 3.3]{librospringer} and~\cite[Theorem 1]{ghiloni}. These results have been recovered in~\cite{perotti} for admissible slice regular functions over $A$. We are now able to state them under different assumptions.

\begin{proposition}
Assume that $\OO$ is a slice domain or a product domain. Let $f\in\mc{SR}(\OO)$ be such that $V(f)\setminus\s_x$ has an accumulation point in $\s_x$. Under any of the following hypotheses, the normal function $N(f^c)$ vanishes identically:
\begin{enumerate}
\item
$A$ is a compatible $^*$-algebra.
\item
$V(N(f^c))$ is circular.
\item
$f$ is tame.
\item
$f_s'(x)\in C_A$ for each $x\in\OO\setminus\rr$.
\end{enumerate}
If, moreover, $\mc{SR}(\OO)$ is nonsingular then  $f\equiv0$.
\end{proposition}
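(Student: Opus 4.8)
The plan is to reduce the whole statement to the single assertion that the (slice regular) normal function $N(f^c)$ vanishes identically; granting this, the last clause is immediate. Indeed $N(f^c)$ is the normal function of the slice regular function $f^c$, hence itself an element of $\mc{SR}(\OO)$, and it is precisely the (squared) norm of $f^c$ in the $^*$-algebra $\mc{SR}(\OO)$. Therefore, if $\mc{SR}(\OO)$ is nonsingular, then $N(f^c)\equiv 0$ forces $f^c\equiv 0$, whence $f=(f^c)^c\equiv 0$. So the entire work concentrates on proving $N(f^c)\equiv 0$ under each of the four hypotheses.

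The first step is to notice that all four hypotheses share one common consequence, namely the inclusion
\[
V(N(f^c)) \supseteq \bigcup_{y \in V(f)}\s_y.
\]
Under hypothesis 1 this is exactly part 1 of Corollary~\ref{corollaryzeroscompatible}, and under hypothesis 4 it is the corresponding statement in part 2 of Corollary~\ref{corollaryzeros}. Under hypotheses 2 and 3 I would instead begin from the general inclusion $V(N(f^c))^c \supseteq V(f)$ of Corollary~\ref{corollaryzeros}, which gives $y^c \in V(N(f^c))$ for every $y \in V(f)$; since $V(N(f^c))$ is circular (by assumption in case 2, and because $N(f^c)=N(f)$ is slice preserving, so that Lemma~\ref{slicepreservinginverse} applies, in case 3) and $\s_{y^c}=\s_y$, the whole sphere $\s_y$ then lies in $V(N(f^c))$. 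Thus in every case the zero set of $N(f^c)$ contains the full spheres through the zeros of $f$.

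Next comes the geometric heart of the argument. By hypothesis there are points $y_n \in V(f)\setminus\s_x$ accumulating at some $p \in \s_x$; since distinct spheres are disjoint and a finite union of spheres avoiding $\s_x$ is closed, the $\s_{y_n}$ must be infinitely many distinct spheres, with $(\re(y_n),|\im(y_n)|)\to(\re(p),|\im(p)|)$. Fixing an arbitrary $K\in\s_A$ and intersecting with $\cc_K$, the points $\re(y_n)+|\im(y_n)|K \in \s_{y_n}\cap\cc_K\subseteq V(N(f^c))$ form infinitely many distinct zeros of $N(f^c)$ on $\cc_K$ converging to $\re(p)+|\im(p)|K\in\OO_K$. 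Hence $V(N(f^c))\cap\cc_K$ is non-discrete for \emph{every} $K$. Applying the identity principle (Theorem~\ref{discretezeros}) to the slice regular function $N(f^c)$ then rules out case~1 outright, so in the slice domain case this already gives $N(f^c)\equiv 0$. In the product domain case one must additionally exclude case~3; here I would use that $p$ is necessarily non-real (a product domain misses $\rr$), so the construction places accumulating zeros in \emph{every} half-slice $\cc_K^+$, whereas case~3 always leaves at least one half-slice on which the zeros are discrete. This incompatibility again forces $N(f^c)\equiv 0$.

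I expect the delicate points to be exactly the passage through the four hypotheses — checking that each one genuinely delivers \emph{whole spheres} in $V(N(f^c))$ rather than merely isolated conjugate points — and the product domain bookkeeping in the final geometric step: one must verify that the whole-sphere accumulation contradicts \emph{all} sub-cases of case~3 of Theorem~\ref{discretezeros}, including those in which \eqref{fin} holds for more than one direction (cf.\ Example~\ref{2fins}), not just the generic one. The remaining verifications — continuity of $\re$ and of $|\cdot|=\sqrt{n(\cdot)}$ on $Q_A$, circularity of $\OO$ guaranteeing $\s_{y_n}\subseteq\OO$, and injectivity of $(\alpha,|\beta|)\mapsto\alpha+|\beta|K$ — are routine.
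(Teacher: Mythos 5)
Your proposal is correct and follows essentially the same route as the paper: the paper's (much terser) proof likewise invokes Corollaries~\ref{corollaryzeros} and~\ref{corollaryzeroscompatible} to show that under each of the four hypotheses $V(N(f^c))$ contains a circular set accumulating to $\s_x$, and then concludes via the identity principle of Theorem~\ref{discretezeros}. Your additional bookkeeping — deriving whole spheres in cases 2 and 3 from the inclusion $V(N(f^c))^c\supseteq V(f)$ plus circularity, and checking that the accumulation lands in \emph{both} half-slices $\cc_K^+$ and $\cc_{-K}^+$ so as to exclude case~3 of the identity principle on product domains — simply makes explicit what the paper leaves to the reader.
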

\begin{proof}
Under any of the hypotheses listed in the statement, thanks to Corollaries~\ref{corollaryzeros} and~\ref{corollaryzeroscompatible}, the zero set $V(N(f^c))$ includes a circular set that accumulates to $\s_x$. By Theorem~\ref{discretezeros}, $N(f^c)\equiv0$.
\end{proof}

By the previous result and by Corollary~\ref{corollaryzeros}: 

\begin{corollary}\label{structure_zeros}
Assume that $\OO$ is a slice domain or a product domain.
Let $f \in \mc{SR}(\Omega)$ be such that $N(f^c)\not\equiv0$. If $f'_s(x) \in C_A$ for each $x \in \OO\setminus\rr$, then $V(f)$ is a union of isolated points or isolated spheres $\s_x$. 
\end{corollary}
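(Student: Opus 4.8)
The plan is to describe $V(f)$ one sphere at a time using Corollary~\ref{corollaryzeros}, and then to upgrade this sphere-wise picture to a global isolation statement by exploiting the hypothesis $N(f^c)\not\equiv0$ through the Proposition that immediately precedes this Corollary.

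First I would apply part 2 of Corollary~\ref{corollaryzeros}. Since $f'_s(x)\in C_A$ for every $x\in\OO\setminus\rr$, that result guarantees that for each $x\in\OO$ the intersection $\s_x\cap V(f)$ is either empty, a single point, or the whole sphere $\s_x$ (the real points $x\in\rr$ falling under the degenerate case $\s_x=\{x\}$). This already pins down the only admissible shapes for the pieces of $V(f)$: isolated points, arising on those spheres where the intersection is a singleton, and full spheres $\s_x$, arising where $\s_x\subseteq V(f)$. What remains is to prove that each such piece is isolated in $\OO$.

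Next I would use the preceding Proposition in contrapositive form. Under its fourth hypothesis, namely that $f'_s(x)\in C_A$ for each $x\in\OO\setminus\rr$ (which is precisely our assumption), that Proposition asserts that if $V(f)\setminus\s_x$ had an accumulation point lying in $\s_x$ for some $x$, then $N(f^c)$ would vanish identically. As we assume $N(f^c)\not\equiv0$, we conclude that for every $x\in\OO$ the set $V(f)\setminus\s_x$ has no accumulation point on $\s_x$; equivalently, every $q\in\s_x$ admits a neighborhood meeting no zero of $f$ outside $\s_x$.

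Finally I would convert this pointwise non-accumulation into genuine isolation by compactness. Fixing $x$ with $\s_x\cap V(f)\neq\emptyset$ and covering the compact sphere $\s_x$ by finitely many of the neighborhoods just produced, I obtain an open set $U\supseteq\s_x$ with $U\cap(V(f)\setminus\s_x)=\emptyset$, whence $U\cap V(f)=\s_x\cap V(f)$. Thus the piece $\s_x\cap V(f)$, which by the first step is either a single point or the whole sphere $\s_x$, is isolated in $V(f)$, and $V(f)$ is exactly the disjoint union of such pieces. I expect the only delicate point to be the bookkeeping in this last step: checking that the absence of limit points of $V(f)\setminus\s_x$ on the compact sphere really yields a full neighborhood of $\s_x$ free of other zeros, and that the same argument also handles the singleton case, where the isolated zero may be non-real and need not be accompanied by its conjugate. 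Everything else is an assembly of Corollary~\ref{corollaryzeros} and the preceding Proposition.
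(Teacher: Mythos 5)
Your overall strategy is exactly the paper's: the paper derives this corollary in one line by combining part \emph{2} of Corollary~\ref{corollaryzeros} (each $\s_x\cap V(f)$ is empty, a singleton, or all of $\s_x$) with the contrapositive of the immediately preceding Proposition under its fourth hypothesis (no point of any $\s_x$ is an accumulation point of $V(f)\setminus\s_x$, since $N(f^c)\not\equiv0$). Those two steps of yours are correct and are the whole content of the paper's argument.

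Your final step, however, contains a genuine error: you cover ``the compact sphere $\s_x$'' by finitely many neighborhoods to produce an open set $U\supseteq\s_x$ with $U\cap V(f)=\s_x\cap V(f)$. But in this general setting $\s_x$ is a translate and dilate of $\s_A$, which need not be compact: see Examples~\ref{ex:lowerdimensional}, where $\s_{\s\hh}=H^2$ is a two-sheeted hyperboloid, $\s_{\s\oo}=H^6$ is likewise unbounded, and $\s_{\D\hh}=\mathscr{T}^4$ is the tangent bundle over $S^2$. So the finite-subcover argument is unavailable, and indeed for a non-compact $\s_x$ the non-accumulation of $V(f)\setminus\s_x$ at points of $\s_x$ does not in general yield a uniform neighborhood of $\s_x$ disjoint from the rest of $V(f)$ (zeros could approach $\s_x$ ``at infinity'' without accumulating at any of its points). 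The fix is simply to drop this upgrade: the notion of isolation the statement asserts is precisely the pointwise non-accumulation delivered by the preceding Proposition, so your second step already finishes the proof and the compactness step is both incorrect as written and unnecessary.
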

 

\section{Zeros of slice products}\label{sec:zerosofproducts}

We now study some properties of the zeros of the slice product of two slice functions. The description was relatively simple in the case of slice regular functions over the quaternions,~\cite[Chapter 3]{librospringer}, and it became more articulate in the octonionic case,~\cite{ghiloni}, where the so-called ``camshaft effect'' was observed. We address here the general case of slice functions over $A$ whose spherical derivatives are either $0$ or invertible. We will then prove stronger properties under additional assumptions on $A$ or restricting to slice regular functions.
       
       \begin{theorem}\label{Camshaft}
           Let $f,g\in\mc{S}(\OO)$.  If $x\in\OO$ is real and $x\in V(f)\cup V(g)$, then $x\in V(f\cdot g)$. Now let $x\in\OO\setminus\rr$:
                  \begin{enumerate} 
                  \item 
	If $\s_x\subseteq V(f)$ or $\s_x\subseteq V(g)$, then $\s_x\subseteq V(f\cdot g)$.
            \item
            If $\s_x\cap V(f)$ includes a point $y$ then 
            \[(f\cdot g)'_s(x)=f'_s(x)\vs g(x)-(\im(y) f'_s(x)) g'_s(x).\]
            \begin{enumerate}
                \item If $f'_s(x)$ is invertible and $(f\cdot g)'_s(x)=0$ then $\s_x\subseteq V(f\cdot g)$.
                \item If $(f\cdot g)'_s(x)$ is invertible, then $\s_x\cap V(f\cdot g)\subseteq\{w\}$ with
                \[ w=\big((y f'_s(x)) \vs g(x)-(y\im(y) f'_s(x)) g'_s(x)\big) \big(  (f\cdot g)'_s(x) \big)^{-1}.\]
            \end{enumerate}
            \item
            If $\s_x\cap V(g)$ includes a point $z$,  then 
            \[(f\cdot g)'_s(x)=\vs  f(x) g'_s(x)- f'_s(x)(\im(z)  g'_s(x)).\]
            \begin{enumerate}
                \item If $g'_s(x)$ is invertible and $(f\cdot g)'_s(x)=0$ then $\s_x\subseteq V(f\cdot g)$.
                \item If $(f\cdot g)'_s(x)$ is invertible, then  $\s_x\cap V(f\cdot g)\subseteq\{w\}$ with
                \[ w=\big(\vs f(x)(z g'_s(x))- f'_s(x)(z\im(z) g'_s(x)) \big) \big( (f\cdot g)'_s(x) \big)^{-1}.\]
            \end{enumerate}            
            \item
           If, for some $y,z \in \s_x$, $y \in V(f)$ and $z\in V(g)$, then 
           \[(f\cdot g)'_s(x) =(y^c f'_s(x)) g'_s(x)-f'_s(x) (z g'_s(x)).\]
           \begin{enumerate}
               \item If $f'_s(x)$ or $g'_s(x)$ is invertible and $(f\cdot g)'_s(x)=0$ then $\s_x\subseteq V(f\cdot g)$.
               \item If $(f\cdot g)'_s(x)$ is invertible then $\s_x\cap V(f\cdot g)\subseteq\{w\}$, with 
               \[ w=\big( n(x) f'_s(x) g'_s(x) - (y f'_s(x)) (z g'_s(x))\big) \big( (f\cdot g)'_s(x) \big)^{-1}.\]
            \end{enumerate}
        \end{enumerate}
        \end{theorem}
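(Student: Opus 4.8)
The plan is to derive the entire statement from two master identities furnished by the decomposition~\eqref{decomposedproduct},
\[
\vs{(f\cdot g)} = \vs f\,\vs g + \im^2\,(f'_s\,g'_s), \qquad (f\cdot g)'_s = \vs f\,g'_s + f'_s\,\vs g,
\]
where $\im^2$ is slice preserving, hence central, so that no bracketing ambiguity arises in the first summand. These come together with two elementary facts: that $(f\cdot g)(x)=f(x)g(x)$ when $x$ is real, and that $(f\cdot g)(w)=\vs{(f\cdot g)}(x)+\im(w)\,(f\cdot g)'_s(x)$ for every $w\in\s_x$. The real case and part~\emph{1} are then immediate: if $\s_x\subseteq V(f)$ then $\vs f(x)=\tfrac12(f(y)+f(y^c))=0$ and $f'_s(x)=0$, so both master identities make $f\cdot g\equiv 0$ on $\s_x$ (and symmetrically for $g$). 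For the remaining parts I will use throughout that $y=\alpha+\beta J\in\s_x$ lies in $V(f)$ precisely when $\vs f(x)=-\im(y)f'_s(x)$, and likewise for $g$.

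First I would record the three spherical-derivative formulas of parts~\emph{2},~\emph{3},~\emph{4} by substituting the zero conditions $\vs f(x)=-\im(y)f'_s(x)$ and/or $\vs g(x)=-\im(z)g'_s(x)$ into $(f\cdot g)'_s=\vs f\,g'_s+f'_s\,\vs g$; in the two-zero case~\emph{4} one rewrites $-\im(y)=y^c-\re(x)$ and $-\im(z)=z-\re(x)$ and notes that the real-part contributions cancel, leaving $(y^c f'_s(x))g'_s(x)-f'_s(x)(z\,g'_s(x))$. The \emph{(b)} sub-cases are then purely formal. Since $(f\cdot g)'_s(x)$ is invertible, the sphere equation $(f\cdot g)(w)=0$ has the single solution
\[
w=\re(x)-\vs{(f\cdot g)}(x)\,\big((f\cdot g)'_s(x)\big)^{-1},
\]
whence $\s_x\cap V(f\cdot g)\subseteq\{w\}$; expanding $\vs{(f\cdot g)}(x)=\vs f(x)\vs g(x)+\im(x)^2(f'_s(x)g'_s(x))$ through the zero conditions and collecting the central factors $\re(x)$ and $\im(x)^2$ — using only alternativity, via associators such as $(y,\im(y),f'_s(x))=0$ — turns this into the displayed expressions for $w$.

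The genuine difficulty is concentrated in the \emph{(a)} sub-cases, where $(f\cdot g)'_s(x)=0$ forces $f\cdot g$ to be constant on $\s_x$ and I must show that this constant, $\vs{(f\cdot g)}(x)$, vanishes. After the zero substitutions, the claim in case~\emph{2.(a)} reduces to the nonassociative identity $(\im(y)f'_s(x))\vs g(x)=\im(y)^2(f'_s(x)g'_s(x))$, to be proved from the constraint $f'_s(x)\vs g(x)=(\im(y)f'_s(x))g'_s(x)$ under the hypothesis that $f'_s(x)$ is invertible. This is exactly where invertibility is indispensable — its failure being the source of the ``camshaft'' pathology — and it is the step I expect to be the main obstacle. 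I would crack it with the Moufang identities: writing $s:=\im(y)f'_s(x)$ and $p:=f'_s(x)$, the constraint reads $\vs g(x)=p^{-1}(s\,g'_s(x))$, and the first Moufang identity~\eqref{moufang1} gives
\[
s\big(p^{-1}(s\,g'_s(x))\big)=(s\,p^{-1}s)\,g'_s(x)=(\im(y)^2 p)\,g'_s(x)=\im(y)^2(f'_s(x)g'_s(x)),
\]
where the reduction $s\,p^{-1}s=\im(y)^2 p$ is carried out inside the associative subalgebra generated by $\im(y)$ and $f'_s(x)$ (Artin's theorem). Cases~\emph{3.(a)} and~\emph{4.(a)} follow by the mirror computation, invoking the second Moufang identity~\eqref{moufang2} when it is $g'_s(x)$ rather than $f'_s(x)$ that is assumed invertible; in case~\emph{4} one first simplifies the constraint to $(\im(y)f'_s(x))g'_s(x)=-f'_s(x)(\im(z)g'_s(x))$, again by cancellation of real parts.
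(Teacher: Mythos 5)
Your proposal is correct and follows essentially the same route as the paper's proof: decompose $f$ and $g$ into spherical value and spherical derivative, substitute the zero conditions $\vs f(x)=-\im(y)f'_s(x)$, $\vs g(x)=-\im(z)g'_s(x)$ into formula~\eqref{decomposedproduct}, solve the linear sphere equation for the \emph{(b)} subcases, and handle the \emph{(a)} subcases via the first (resp.\ second) Moufang identity together with Artin's theorem and Lemma~\ref{nonassociative} — which is precisely the reduction $-(J(Ja_2))b_2-a_2b_2=0$ (resp.\ $-a_2(K(Kb_2))-a_2b_2=0$) carried out in the paper. The only difference is notational (the paper works with $a_1=\vs f(x)$, $a_2=\beta f'_s(x)$, etc.), and your identification of the \emph{(a)} subcases as the point where invertibility is essential matches the paper's argument exactly.
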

       
       \begin{proof}  
           If $x\in\OO\cap\rr$, then $(f\cdot g)(x)=\vs  f(x) \vs g(x)=f(x)g(x)$. Otherwise, let $x\in\OO\setminus\rr$ and let $\s_x = \alpha + \beta \s$.           
           For all $I \in \s_A$, $f(\alpha+\beta I)=a_1+Ia_2$  where $a_1 = \vs  f(x), a_2 = \beta  f'_s(x)$, and $g(\alpha+\beta I)=b_1+Ib_2$  where $b_1 = \vs  g(x), b_2 = \beta  g'_s(x)$. Moreover,  $(f\cdot g)(\alpha+\beta I)=a_1b_1-a_2b_2 + I (a_1b_2+a_2b_1)$.
        \begin{enumerate}
        \item Assume that $f\equiv0$ on $\s_x$. Then $a_1=a_2=0$ and $f\cdot g\equiv0$ on $\s_x$. The case when $g\equiv0$ on $\s_x$ is analogous.   
        
        \item If $y=\alpha+\beta J \in V(f)$, then $a_1+Ja_2=0$ and
        \[(f\cdot g)(\alpha+\beta I)= -(Ja_2)b_1-a_2b_2+ I \left((-Ja_2)b_2+a_2b_1\right).\]
        Therefore $(f\cdot g)'_s(x)=-(\im(y) f'_s(x)) g'_s(x) +f'_s(x)\vs g(x)$. This quantity vanishes if and only if $a_2b_1=(Ja_2)b_2$, which is in turn equivalent (if $a_2$ is invertible) to the fact that
         \begin{align*}
         (f\cdot g)(\alpha+\beta I)&= -(Ja_2)(a_2^{-1}((Ja_2)b_2))-a_2b_2\\
         &= -((Ja_2)a_2^{-1}(Ja_2))b_2-a_2b_2\\
         &=-(J(Ja_2))b_2-a_2b_2=0
          \end{align*}
        for all $I \in \s$, owing to the first Moufang identity~\eqref{moufang1} and to property {\it 1} in Lemma~\ref{nonassociative}.
        
        Now let $(f\cdot g)'_s(x)$ be invertible. If $\s_x\cap V(f\cdot g)$ is not empty, then its unique element $w$ is given by  $w=\alpha-\vs {(f\cdot g)}(x) ((f\cdot g)'_s(x))^{-1}$.  We conclude by observing that:
        \begin{align*}
        \alpha(f\cdot g)'_s(x)-\vs {(f\cdot g)}(x) &= \alpha\beta^{-1}\left((-Ja_2)b_2+a_2b_1\right)+(Ja_2)b_1+a_2b_2\\
        &=  \beta^{-1}((-\alpha J+\beta)a_2) b_2+\beta^{-1}((\alpha+\beta J)a_2)b_1\\
        &=  -\beta^{-1}((\alpha +\beta J) Ja_2) b_2+\beta^{-1}((\alpha+\beta J)a_2)b_1\\
        &= -(y \im(y) f'_s(x))g'_s(x) + (y f'_s(x)) \vs g(x).
        \end{align*}
                
        \item If $z=\alpha+\beta K\in V(g)$, then $b_1+Kb_2=0$ and
        \[(f\cdot g)(\alpha+\beta I)= -a_1(Kb_2)-a_2b_2+ I \left(a_1b_2-a_2(Kb_2)\right).\]
        The slice derivative $(f\cdot g)'_s(x)=\vs  f(x) g'_s(x)- f'_s(x)(\im(z)  g'_s(x))$ vanishes if and only if  $a_1b_2=a_2(Kb_2)$, which is in turn equivalent (if $b_2$ is invertible) to
         \begin{align*}
         (f\cdot g)(\alpha+\beta I)&= -((a_2(Kb_2))b_2^{-1})(Kb_2)-a_2b_2\\
         &= -a_2((Kb_2)b_2^{-1}(Kb_2))-a_2b_2\\
         &=-a_2(K(Kb_2))-a_2b_2=0
          \end{align*}
         for all $I \in \s$, by the second Moufang identity~\eqref{moufang2} and by property {\it 1} of Lemma~\ref{nonassociative}.
        Now suppose $(f\cdot g)'_s(x)$ is invertible. If $\s_x\cap V(f\cdot g)$ is not empty, then its unique element $w$ is given by  $w=\alpha-\vs {(f\cdot g)}(x) ((f\cdot g)'_s(x))^{-1}$. By direct computation,
        $\alpha(f\cdot g)'_s(x)-\vs {(f\cdot g)}(x)=\vs f(x)(z g'_s(x))- f'_s(x)(z\im(z) g'_s(x))$.

        \item If $y=\alpha+\beta J \in V(f), z=\alpha+\beta K \in V(g)$ then $a_1+Ja_2=b_1+Kb_2=0$. It follows that $\vs f(x) = -\im(y)f'_s(x)$ and $\vs g(x) = -\im(z)f'_s(x)$ and the thesis can easily be deduced from cases {\it 2} and {\it 3}.
        \end{enumerate}
        \end{proof}
        
Here is an instance of case {\it 3.(b)} of Theorem~\ref{Camshaft}, where the intersection is actually empty. Examples of other cases will be given in Subsection~\ref{ss:compatible}.
   
\begin{example}
Let $A=\s\oo$ with the $^*$-involution introduced in Example~\ref{soincompatible} and let $f(x)=x-2l$, $g(x)=x-i$. Then $f,g \in \mc{SR}(Q_A)$ have slice derivatives $f'_s\equiv 1 \equiv g'_s$ and clearly $\s_A \cap V(f) = \emptyset, \s_A \cap V(g)=\{i\}$. On the other hand, if we look at their slice product $f\cdot g(x) = x^2-x(i+2l) + 2li = x^2+1-\left(x+\frac53 i+ \frac 43 l\right)(i+2l)$ on the sphere $\s_A$, we note that the slice derivative $-(i+2l)$ is invertible and that the value $-\left(\frac53 i+ \frac 43 l\right)$ does not belong to $\s_A$; indeed, $t\left(\frac53 i+ \frac 43 l\right) = \frac83 l \neq 0$. Therefore, $\s_A \cap V(f\cdot g) = \emptyset$. 
 \end{example}

It is natural to also consider the case when $\s_x\cap V(f)=\s_x\cap V(g)=\emptyset$. In general, we cannot conclude that $\s_x\cap V(f\cdot g)=\emptyset$: for instance, this is not the case if $f\equiv a$ and $g \equiv b$ where $a,b$ are zero divisors such that $ab=0$. However, when $f$ and $g$ are tame we can prove the next result.

\begin{proposition}\label{Vfg}
Let $f,g\in\mc{S}(\OO)$ be tame. Then it holds
\[
V(f\cdot g)\subseteq V(N(f))\cup V(N(g)).
\]
If $A$ is nonsingular and, for each $x\in\OO\setminus\rr$, $f'_s(x)$ and $g'_s(x)$ belong to $C_A$, then 
\[
V(f\cdot g)\subseteq\bigcup_{x\in V(f)\cup V(g)}\s_x.
\]
\end{proposition}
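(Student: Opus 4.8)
The plan is to prove the two inclusions separately, reducing each to facts already established.

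For the first inclusion I would show that $f\cdot g$ has \emph{no} zeros outside $V(N(f))\cup V(N(g))$. Set $\OO'':=\OO\setminus(V(N(f))\cup V(N(g)))$ (if this is empty there is nothing to prove). Because $f$ and $g$ are tame, $N(f)$ and $N(g)$ are slice preserving, so by Lemma~\ref{slicepreservinginverse} their zero sets are circular; hence $\OO''$ is circular and, in particular, invariant under $x\mapsto x^c$. On $\OO''$ neither $N(f)$ nor $N(g)$ vanishes, so by Proposition~\ref{reciprocal} both $f$ and $g$ lie in the central cone $C_{\mc{S}(\OO'')}$, the product $f\cdot g$ is again tame, and \eqref{Nfg} gives $N(f\cdot g)=N(f)N(g)$ there. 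Since $N(f)$ and $N(g)$ are slice preserving and nowhere zero on $\OO''$, their product is nowhere zero there; consequently $N(f\cdot g)$, and hence also $N((f\cdot g)^c)=N(f\cdot g)$ by tameness, is nonvanishing on $\OO''$.

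The delicate point, which I regard as the main obstacle, is that a nonvanishing normal function does not by itself prevent a slice function from vanishing: case \textit{3.(a)} of Theorem~\ref{zeros} warns that a zero $y$ of $h$ need not be a zero of $N(h)$. I would circumvent this by invoking the \emph{conjugate-symmetric} inclusion of Corollary~\ref{corollaryzeros}.1 applied to $h:=f\cdot g$, namely $V(f\cdot g)\subseteq V(N((f\cdot g)^c))^c$. Since $N((f\cdot g)^c)$ is nonvanishing on the conjugation-invariant set $\OO''$, we have $V(N((f\cdot g)^c))\cap\OO''=\emptyset$ and therefore $V(N((f\cdot g)^c))^c\cap\OO''=\emptyset$ as well; thus $V(f\cdot g)\cap\OO''=\emptyset$, that is, $V(f\cdot g)\subseteq V(N(f))\cup V(N(g))$. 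Note that real points cause no special trouble, being their own conjugates.

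For the second inclusion I would feed the first one into the refined description of zero sets. Assume $A$ nonsingular and $f'_s(x),g'_s(x)\in C_A$ for every $x\in\OO\setminus\rr$, and let $x_0\in V(f\cdot g)$. By the first part, $x_0\in V(N(f))$ or $x_0\in V(N(g))$; say the former. Since $f$ is tame, satisfies $f'_s\in C_A$, and $A$ is nonsingular, Corollary~\ref{corollaryzeros}.2 yields the exact equality $V(N(f))=\bigcup_{x\in V(f)}\s_x$; hence $x_0\in\bigcup_{x\in V(f)}\s_x\subseteq\bigcup_{x\in V(f)\cup V(g)}\s_x$. The case $x_0\in V(N(g))$ is identical with $g$ in place of $f$, which completes the argument.
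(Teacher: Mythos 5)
Your proposal is correct and follows essentially the same route as the paper's proof: both rest on Proposition~\ref{reciprocal} to get tameness of $f\cdot g$ and the identity $N(f\cdot g)=N(f)N(g)$ on $\OO''$, then use Corollary~\ref{corollaryzeros} (case \textit{1} for the first inclusion, case \textit{2} for the second). The paper phrases the first inclusion as a contradiction from a hypothetical zero $x\in\OO''\cap V(f\cdot g)$, while you argue the contrapositive via the nonvanishing of $N((f\cdot g)^c)$ on the circular set $\OO''$ --- a purely presentational difference.
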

       
\begin{proof}
By Proposition~\ref{reciprocal}, $f\cdot g$ is tame and $N(f \cdot g)=N(f)N(g)$ on $\tilde\OO:=\OO\setminus(V(N(f))\cup V(N(g)))$. If there existed $x\in\tilde\OO\cap V(f\cdot g)$, then $x^c\in \tilde\OO\cap V(N((f\cdot g)^c))=\tilde\OO\cap V(N(f\cdot g))$. Thus $N(f\cdot g)=N(f) N(g)$ would vanish on the sphere $\s_x\subseteq\tilde\OO$. But then $x$ would belong to $V(N(f))\cup V(N(g))$, a contradiction. Therefore $V(f\cdot g)\subseteq V(N(f))\cup V(N(g))$. The last part is a consequence of Corollary~\ref{corollaryzeros}, case {\it 2}.
\end{proof}

We have already observed that the nonsingularity hypothesis is essential for the second statement of Proposition~\ref{Vfg}, in view of examples such as~\ref{ex:singular} and~\ref{ex:singular2}. We now show that the tameness hypothesis cannot either be removed from Proposition~\ref{Vfg}.

\begin{example}
Let $A=\s\oo$ with the $^*$-involution introduced in Example~\ref{soincompatible} and let $f(x)=x-li$, $g(x)=x+li$. Then $f,g \in \mc{SR}(Q_A)$ and their slice product $(f \cdot g)(x) = x^2-1$ vanishes at $1$. On the other hand, by direct computation neither $N(f)(x) = x^2-2x(li)+1$ nor $N(g)(x) = x^2+2x(li)+1$ vanish at $1$.
\end{example}

  
\subsection{The associative or compatible case}\label{ss:compatible}

In the compatible setting, we can strengthen Theorem~\ref{Camshaft} under additional hypotheses on the slice derivatives.
  
\begin{theorem}\label{Camshaftcompatible}
Assume that $A$ is compatible.
    Let $f,g\in\mc{S}(\OO)$.  If $x\in\OO$ is real and $x\in V(f)\cup V(g)$, then $x\in V(f\cdot g)$. More generally,
                  \begin{enumerate} 
                  \item 
	If $\s_x\subseteq V(f)$ or $\s_x\subseteq V(g)$, then $\s_x\subseteq V(f\cdot g)$.
	\end{enumerate}
If $x\in\OO\setminus\rr$ and $f'_s(x), g'_s(x)$ and $(f\cdot g)'_s(x)$ belong to $C_A$, then:
       \begin{enumerate}
                   \item[2.]
            If $\s_x\cap V(f)$ is a singleton $\{y\}$ and $\s_x\cap V(g)=\emptyset$, then
	$\s_x\cap V(f\cdot g)\subseteq\{w\}$, with
                \[ w=\big((y f'_s(x)) \vs g(x)-(y\im(y) f'_s(x)) g'_s(x)\big) \big( f'_s(x)\vs g(x)-(\im(y) f'_s(x)) g'_s(x) \big)^{-1}.\]
            \item[3.] 
            If $\s_x\cap V(f)=\emptyset$ and $\s_x\cap V(g)$ is a singleton $\{z\}$,  then $\s_x\cap V(f\cdot g)\subseteq\{w\}$, with
                \[ w=\big(\vs f(x)(z g'_s(x))- f'_s(x)(z\im(z) g'_s(x)) \big) \big( \vs  f(x) g'_s(x)- f'_s(x)(\im(z)  g'_s(x)) \big)^{-1}.\]   
           \item[4.]
           If $\s_x\cap V(f)=\{y\}$ and $\s_x\cap V(g)=\{z\}$ for some $y,z \in \s_x$, then one of the following holds:
           \begin{enumerate}
               \item  $\s_x\subseteq V(f\cdot g)$; or
               \item  $\s_x\cap V(f\cdot g)\subseteq\{w\}$, with 
               \[ w=\big( n(x) f'_s(x) g'_s(x) - (y f'_s(x)) (z g'_s(x))\big) \big( (f\cdot g)'_s(x) \big)^{-1};\]
            \end{enumerate}
            depending on whether or not $(f\cdot g)'_s(x) =(y^c f'_s(x)) g'_s(x)-f'_s(x) (z g'_s(x))$ vanishes.
     
        \end{enumerate}
        \end{theorem}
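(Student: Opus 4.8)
The plan is to deduce everything from Theorem~\ref{Camshaft} by exploiting two features of the central cone in the compatible setting: that every nonzero element of $C_A$ is invertible (Theorem~\ref{centralproduct}, item~2), and that $n$ is multiplicative on $C_A$ (Theorem~\ref{centralproduct}, item~4, and Proposition~\ref{compatiblecasenorm}). The statements about real $x$ and item~1 are literally those of Theorem~\ref{Camshaft} and need no extra hypothesis, so I would simply cite them. For items~2--4 the assumption $f'_s(x),g'_s(x),(f\cdot g)'_s(x)\in C_A$ means that each of these spherical derivatives is now either $0$ or invertible. Combining this with Theorem~\ref{zeros} pins down the relevant ones: if $\s_x\cap V(f)$ is a singleton we are in case~3 of Theorem~\ref{zeros}, so $f'_s(x)$ is neither $0$ nor a right zero divisor and hence $f'_s(x)\in C_A^*$; symmetrically for $g$.

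Item~4 then follows immediately: since $(f\cdot g)'_s(x)\in C_A$ is either $0$ or invertible, I would split into the two cases of item~4 of Theorem~\ref{Camshaft}. When $(f\cdot g)'_s(x)=0$, item~4(a) applies (as $f'_s(x)$ is invertible) and yields $\s_x\subseteq V(f\cdot g)$; when $(f\cdot g)'_s(x)$ is invertible, item~4(b) gives the singleton bound with the displayed $w$. This reproduces exactly the stated dichotomy. For items~2 and~3 the formula for $w$ has $(f\cdot g)'_s(x)$ in its denominator (the stated denominator equals $(f\cdot g)'_s(x)$ by the expressions in items~2 and~3 of Theorem~\ref{Camshaft}), so the conclusion is exactly item~2(b), resp.\ item~3(b), of Theorem~\ref{Camshaft}, \emph{provided} we know $(f\cdot g)'_s(x)$ is invertible. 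Thus the whole content of items~2 and~3 reduces to the claim I expect to be the main obstacle: under the hypotheses of item~2 one cannot have $(f\cdot g)'_s(x)=0$, and symmetrically for item~3.

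To prove the claim I would argue by contradiction. With the notation $a_1=\vs f(x)$, $a_2=\beta f'_s(x)$, $b_1=\vs g(x)$, $b_2=\beta g'_s(x)$ of the proof of Theorem~\ref{Camshaft} and $y=\alpha+\beta J\in V(f)$, the vanishing of $(f\cdot g)'_s(x)$ together with item~2(a) of Theorem~\ref{Camshaft} (legitimate since $f'_s(x)\in C_A^*$) forces $\s_x\subseteq V(f\cdot g)$, which unwinds to the two relations $a_2b_1=(Ja_2)b_2$ and $a_2b_2=-(Ja_2)b_1$. If $g'_s(x)=0$ these give $b_1=0$, i.e.\ $\s_x\subseteq V(g)$, against $\s_x\cap V(g)=\emptyset$; so $g'_s(x)\in C_A^*$ and $b_2$ is invertible. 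I would then exhibit the candidate zero $z=\re(x)-\vs g(x)\,g'_s(x)^{-1}=\alpha+\beta K'$ with $K'=-b_1b_2^{-1}$, for which $g(z)=b_1+K'b_2=0$ holds automatically (using $(b_1,b_2^{c},b_2)=0$), and show $K'\in\s_A$. That $n(K')=1$ is the clean part: since $a_2,b_2\in C_A^*$, all of $Ja_2$, $(Ja_2)b_2$, $a_2b_1$, $b_1$ lie in $C_A$, and applying the multiplicativity of $n$ to $a_2b_1=(Ja_2)b_2$ gives $n(b_1)=n(b_2)$.

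The delicate part is $t(K')=0$, equivalently $t(b_1b_2^{c})=0$; here compatibility is essential. From the identity $(x,x^{c},u)=0$ valid on any compatible algebra one gets, by the cyclic symmetry of the alternating associator, $(u,v,v^{c})=0$ for all $u,v$, and linearizing in $v$ produces $(u,v,w^{c})+(u,w,v^{c})=0$. Multiplying the first relation on the right by $b_2^{c}$ and the second by $b_1^{c}$ and adding yields $(a_2b_1)b_2^{c}+(a_2b_2)b_1^{c}=0$ (using $n(b_1)=n(b_2)$ and that $b_ib_i^{c}=n(b_i)$ is central); reassociating both products and invoking the linearized identity collapses the left-hand side to $a_2\,t(b_1b_2^{c})$, whence $t(b_1b_2^{c})=0$ since $a_2$ is invertible. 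Then $K'\in\s_A$, so $z\in\s_x\cap V(g)$, the desired contradiction. I note that in the associative case this last step is trivial: the relation reads $b_1=(a_2^{-1}Ja_2)b_2$, so $K'=-a_2^{-1}Ja_2\in\s_A$ directly by Remark~\ref{preservedsphere}. The linearized-associator computation is precisely the device that transplants this associative argument to all compatible algebras. Item~3 is then handled by the symmetric argument, producing instead a zero of $f$ on $\s_x$.
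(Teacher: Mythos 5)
Your proof is correct and follows the paper's own strategy: the whole content beyond Theorem~\ref{Camshaft} is to rule out its cases \emph{2.(a)} and \emph{3.(a)}, which you do by exhibiting the same candidate zero of $g$ on $\s_x$ (your $K'=-\vs g(x)\,g'_s(x)^{-1}$ is the paper's $U=-b_1b_2^{-1}$) and checking it lies in $\s_A$ via the multiplicativity of $n$ on $C_A$. The one point where you diverge is the trace computation: the paper writes $U=-\bigl(a_2^{-1}((Ja_2)b_2)\bigr)b_2^{-1}$ and reduces $t(U)$ to $t(a_2^{-1}Ja_2)=0$ using Lemma~\ref{traceless} (traces of associators vanish on compatible algebras) together with Remark~\ref{preservedsphere}, whereas you derive $t(b_1b_2^c)=0$ from the linearized identity $(u,v,w^c)+(u,w,v^c)=0$ applied to both the spherical-value and spherical-derivative relations; both are legitimate uses of compatibility and your computation checks out. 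A small caveat on your last sentence: case \emph{3} is not a literal mirror of your case-\emph{2} manipulation (right-multiplying the case-\emph{3} relations by conjugates only recovers $t(K)=0$), but it does reduce to it after conjugating the two relations — or, as the paper does, one first shows $W=-a_2^{-1}a_1\in\s_A$ and then passes to $I=a_2Wa_2^{-1}\in\s_A$ via Remark~\ref{preservedsphere} — so the gap is only in the write-up, not in the idea.
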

 
  Before proving the theorem, we need one more algebraic property:
   
\begin{lemma}\label{traceless}
If $A$ is a compatible $^*$-algebra, then the trace function vanishes on any associator.
      \end{lemma}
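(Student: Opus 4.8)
The plan is to prove $t\big((x,y,z)\big)=0$ directly, by writing $t\big((x,y,z)\big)=(x,y,z)+(x,y,z)^c$ and massaging the conjugate associator into an associator of the conjugated entries, after which the alternating property and the compatibility hypothesis will do the rest.

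First I would conjugate the associator. Using only the defining rule $(ab)^c=b^c a^c$ of the $^*$-involution, a one-line computation gives
\[
(x,y,z)^c=z^c(y^c x^c)-(z^c y^c)x^c=-(z^c,y^c,x^c).
\]
Since the associator is an alternating function of its three arguments (Assumption (A)), transposing the first and third entries costs a single sign, so $(z^c,y^c,x^c)=-(x^c,y^c,z^c)$ and hence $(x,y,z)^c=(x^c,y^c,z^c)$. Consequently
\[
t\big((x,y,z)\big)=(x,y,z)+(x^c,y^c,z^c),
\]
and it remains to establish that $(x^c,y^c,z^c)=-(x,y,z)$.

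Next I would bring in compatibility. By definition the trace takes values in the nucleus, so $t(x),t(y),t(z)\in\mathfrak{N}(A)$. Writing $x^c=t(x)-x$ (and similarly for $y,z$) and expanding $(x^c,y^c,z^c)$ by the trilinearity of the associator, I expect every term carrying at least one factor $t(\cdot)$ to vanish, leaving only $(-x,-y,-z)=-(x,y,z)$; this yields $(x^c,y^c,z^c)=-(x,y,z)$ and therefore $t\big((x,y,z)\big)=0$.

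The one point needing care is the claim that a nucleus element kills an associator from \emph{any} of the three slots, not merely the first in which the nucleus is defined. Here the alternating property is essential: if $r\in\mathfrak{N}(A)$ then $(r,a,b)=0$, and antisymmetry gives $(a,r,b)=-(r,a,b)=0$ and $(a,b,r)=-(r,b,a)=0$, so $r$ annihilates the associator wherever it appears. This is what converts the one-sided nuclear condition into the two-sided one that makes the expansion collapse. Everything else is routine linear bookkeeping, and notably no finite-dimensionality or nonsingularity assumption on $A$ is used.
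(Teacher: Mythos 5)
Your proof is correct and follows essentially the same route as the paper's: both reduce to showing $(x,y,z)^c=(x^c,y^c,z^c)=-(x,y,z)$, using that compatibility places $t(x),t(y),t(z)$ in the nucleus and that alternativity lets a nucleus element annihilate the associator from any slot. You merely spell out two steps the paper leaves implicit (the conjugation identity and the any-slot nuclear annihilation), which is fine.
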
      
\begin{proof}
Given any $x,y,z$ in $A$, 
\[8(x,y,z)=(2x-t(x),2y-t(y),2z-t(z))=(x-x^c,y-y^c,z-z^c).\]
Hence $(x,y,z)^c=(x^c,y^c,z^c)=\frac{1}{8}(x^c-x,y^c-y,z^c-z)=-(x,y,z)$.
 \end{proof}

\begin{proof}[Proof of Theorem~\ref{Camshaftcompatible}]
We only have to rule out cases {\it 2.(a)} and {\it 3.(a)} of Theorem~\ref{Camshaft}. We resume the formulas obtained in its proof, based on the expressions:
 $f(\alpha+\beta I)=a_1+Ia_2$, $g(\alpha+\beta I)=b_1+Ib_2$  where  $a_1 = \vs  f(x), a_2 = \beta  f'_s(x)$, $b_1 = \vs  g(x), b_2 = \beta  g'_s(x)$. 

Case {\it 2.(a)} takes place if, and only if, $a_2$ is invertible and $(Ja_2)b_2=a_2b_1$, two facts which in turn imply that $a_2^{-1}( (Ja_2)b_2 ) = b_1$. According to our present hypotheses, $a_2 \in C_A^*$ and $b_2 \in C_A$. Either $b_2=0=b_1$, contradicting the hypothesis $V(g) \cap \s_x = \emptyset$; or $b_2 \in C_A^*$ and we can set 
\begin{equation*}
U:=-b_1b_2^{-1}=- (a_2^{-1}( (Ja_2)b_2 )) b_2^{-1}.
\end{equation*}
In this case, owing to Proposition~\ref{compatiblecasenorm}, $n(U) = n(J) = 1$. Moreover, 
\begin{equation*}
t(U) = -t\left( a_2^{-1} (((Ja_2)b_2 ) b_2^{-1}) \right) = -t \left( a_2^{-1}Ja_2 \right) =0,
\end{equation*}
where the first equality follows from Lemma~\ref{traceless}, the second one follows by case {\it 1} of Lemma~\ref{nonassociative} and the third is part of Remark~\ref{preservedsphere}. We conclude that $U \in \s$ and $g(\alpha+\beta U)=b_1+Ub_2=0$, a contradiction.
        
Case {\it 3.(a)} takes place if, and only if, $b_2$ is invertible and $a_1b_2=a_2(Kb_2)$. According to our present hypotheses, $a_2 \in C_A$ and $b_2 \in C_A^*$. Case $a_2=0$ is excluded, for it would imply $a_1=0$ and contradict the hypothesis $V(f) \cap \s_x = \emptyset$. Thus $a_2,b_2\in C_A^*$. Proceeding as above, we see that $W:=-a_2^{-1}a_1=-a_2^{-1}((a_2(Kb_2))b_2^{-1})$ belongs to $\s_A$. Therefore, $I:=-a_1a_2^{-1}=a_2Wa_2^{-1} \in \s_A$ by Remark~\ref{preservedsphere} and $f(\alpha +I \beta) = a_1+I a_2 = 0$, a contradiction.
 \end{proof}

An illustration of cases {\it 2} and {\it 4.(a)} of Theorem~\ref{Camshaftcompatible} follows. Other cases will be covered in Examples~\ref{ex:clifford},~\ref{ex:clifford1} and~\ref{ex:clifford2}.
 
\begin{example}
Let $A=\s\oo$ and let $f(x)=xi-j$, $g(x)=l$. Then $f$ and $g$ are slice regular on $Q_A$ and $(f\cdot g)(x)=x(il)-jl=(x+k)(il)$. By direct inspection, $V(f)=\{k\}$, $V(g)=\emptyset$ while $V(f\cdot g)=\{-k\}$. 
\end{example}
   
\begin{example}
Let $A=\s\oo$ and let $f(x)=xl-il$, $g(x)=x-i$. Then $f$ and $g$ are slice regular on $Q_A$ and $(f\cdot g)(x)=(x^2+1)l$. By direct inspection, $V(f)=V(g)=\{i\}$ while $V(f\cdot g)=\s_A$.
\end{example}

In the compatible case, Proposition~\ref{Vfg} strengthens as well:

\begin{proposition}\label{vfgcompatible}
Suppose $A$ is compatible and nonsingular. Let $f,g\in\mc{S}(\OO)$ be tame. If, for each $x \in \OO \setminus \rr$, $f'_s(x), g'_s(x)$ and $(f\cdot g)'_s(x)$ belong to $C_A$, 
     then
    \[\bigcup_{x\in V(f\cdot g)}\s_x =\bigcup_{x\in V(f)\cup V(g)}\s_x.\]
\end{proposition}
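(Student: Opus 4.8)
The plan is to prove both inclusions at once, by passing everything to the normal functions and exploiting that $f$, $g$ and $f\cdot g$ are all tame. The guiding idea is that for slice preserving functions the zero set is circular and behaves multiplicatively, which lets one translate the statement about $V(f\cdot g)$ into a statement about $V(N(f\cdot g))=V(N(f))\cup V(N(g))$ and then back to $V(f)$ and $V(g)$.

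First I would record the structural facts. Since $A$ is compatible and $f,g$ are tame, Remark~\ref{compatibleN} guarantees that $f\cdot g$ is tame and that $N(f\cdot g)=N(f)\cdot N(g)$. As $N(f),N(g),N(f\cdot g)$ are slice preserving, slice products agree with pointwise products, so $N(f\cdot g)(p)=N(f)(p)\,N(g)(p)$ for every $p\in\OO$. For $p=\alpha+\beta I$ both factors lie in the field $\cc_I$, so the product vanishes if and only if one factor does; this yields the pointwise set identity
\[
V(N(f\cdot g))=V(N(f))\cup V(N(g)).
\]

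Next I would bring in circularity. By Lemma~\ref{slicepreservinginverse} the three zero sets $V(N(f))$, $V(N(g))$, $V(N(f\cdot g))$ are circular, hence of the form $\OO_{E_f},\OO_{E_g},\OO_{E_f\cup E_g}$ for conjugation-invariant subsets $E_f,E_g\subseteq\cc$. Since a sphere $\s_x=\OO_{\{z_0\}}$ is contained in $\OO_{E_f\cup E_g}$ exactly when $z_0\in E_f\cup E_g$, i.e. exactly when $\s_x\subseteq V(N(f))$ or $\s_x\subseteq V(N(g))$, I obtain
\[
\bigcup_{\s_x\subseteq V(N(f\cdot g))}\s_x=\Big(\bigcup_{\s_x\subseteq V(N(f))}\s_x\Big)\cup\Big(\bigcup_{\s_x\subseteq V(N(g))}\s_x\Big).
\]

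Finally I would invoke Corollary~\ref{corollaryzeroscompatible}, case 2, which applies to each of $f$, $g$ and $f\cdot g$: the algebra $A$ is compatible and nonsingular, and the relevant spherical derivatives lie in $C_A$ (for $f\cdot g$ this is precisely the standing hypothesis $(f\cdot g)'_s(x)\in C_A$). Applying the corollary to $f\cdot g$ gives $\bigcup_{x\in V(f\cdot g)}\s_x=\bigcup_{\s_x\subseteq V(N(f\cdot g))}\s_x$, while applying it to $f$ and to $g$ gives $\bigcup_{\s_x\subseteq V(N(f))}\s_x=\bigcup_{x\in V(f)}\s_x$ and the analogue for $g$. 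Chaining these equalities with the displayed one yields $\bigcup_{x\in V(f\cdot g)}\s_x=\bigcup_{x\in V(f)\cup V(g)}\s_x$, as desired. The only genuinely delicate point is checking that $f\cdot g$ satisfies the hypotheses of Corollary~\ref{corollaryzeroscompatible} — tameness (supplied by Remark~\ref{compatibleN}) together with $(f\cdot g)'_s\in C_A$ (assumed) — after which the argument is pure bookkeeping; in particular I would not expect to need the finer camshaft analysis of Theorem~\ref{Camshaftcompatible} here at all.
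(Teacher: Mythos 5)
Your proof is correct and follows essentially the same route as the paper: Remark~\ref{compatibleN} gives $N(f\cdot g)=N(f)N(g)$, hence $V(N(f\cdot g))=V(N(f))\cup V(N(g))$, and the conclusion then follows from Corollary~\ref{corollaryzeroscompatible}. You merely spell out the circularity bookkeeping that the paper leaves implicit; the only cosmetic difference is that case~\emph{2} of that corollary does not actually require tameness, so your care on that point is unnecessary but harmless.
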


\begin{proof}
By Remark~\ref{compatibleN}, we know that $N(f\cdot g) = N(f) N(g)$ and that $N(f\cdot g)$ vanishes if, and only if, $N(f)$ or $N(g)$ does. Therefore $V(N(f\cdot g)) = V(N(f))\cup V(N(g))$. 
    The  statement is then a direct consequence of Corollary~\ref{corollaryzeroscompatible}.
\end{proof}

Here is an example (within an associative nonsingular algebra) where Proposition~\ref{vfgcompatible} does not hold because $f,g$ do not fulfill all of its hypotheses.

\begin{example}
Let $A=\rr_n$ (with $n\ge4$) and set $f\equiv1-2e_{123}$, $g(x)=x-e_4$. Then $f$ and $g$ are slice regular on $Q_A$, they have slice derivatives $f'_s\equiv0 \in C_A, g'_s\equiv1 \in C_A$ and $V(f) = \emptyset,V(g)=\{e_4\}$. The product $(f\cdot g)(x)=x(1-2e_{123})-(1-2e_{123})e_4$ has no zeros in $Q_A$, since $1-2e_{123}$ is invertible and
\begin{align*}
&(1-2e_{123})e_4(1-2e_{123})^{-1} = -\frac 1 3 (1-2e_{123})e_4 (1+2e_{123}) \\
&= -\frac 1 3 (1-2e_{123})(1-2e_{123})e_4 = -\frac 1 3 (5-4e_{123})e_4 = -\frac 5 3 e_4 + \frac 4 3 e_{1234}
\end{align*}
has trace $t\left((1-2e_{123})e_4(1-2e_{123})^{-1}\right) = \frac 8 3 e_{1234}$.
\end{example}

Thanks to Proposition~\ref{vfgcompatible}, if $A$ is nonsingular and $f$ and $g$ are tame, then the inclusions appearing in cases {\it 2}, {\it 3} and {\it 4.(b)} of Theorem~\ref{Camshaftcompatible} become equalities. In the associative case, the same holds without assuming tameness, owing to the next proposition.

\begin{proposition}\label{pro_associative}
Let $A$ be associative. If $f'_s, g'_s$ and $(f\cdot g)'_s$ take values in $C_A$, then 
\[\bigcup_{x\in V(f\cdot g)}\s_x \supseteq\bigcup_{x\in V(f)\cup V(g)}\s_x.\] 
\end{proposition}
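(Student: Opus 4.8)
The plan is to reduce the stated inclusion to a pointwise statement. Since the right-hand side is a union of spheres, it suffices to show that for every $x\in V(f)\cup V(g)$ the sphere $\s_x$ meets $V(f\cdot g)$; indeed, any $w\in\s_x\cap V(f\cdot g)$ satisfies $\s_w=\s_x$, giving $\s_x\subseteq\bigcup_{w\in V(f\cdot g)}\s_w$. Two of the three resulting cases are immediate: if $x\in\OO\cap\rr$ then Theorem~\ref{Camshaft} already yields $x\in V(f\cdot g)$; and if $x\in V(f)\setminus\rr$, the first bullet of Corollary~\ref{formula_associative_case} gives $(f\cdot g)(x)=0$ at once. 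All the content is in the case $x\in V(g)\setminus\rr$, where the ``camshaft effect'' means $f\cdot g$ need not vanish at $x$ itself, so a (possibly different) zero of $f\cdot g$ on $\s_x$ must be produced.

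For this case I would write $x=\alpha+\beta K$ with $\beta\neq0$, $K\in\s_A$, and set $a_1=\vs f(x)$, $a_2=\beta f'_s(x)$, $b_1=\vs g(x)$, $b_2=\beta g'_s(x)$, so that $g(x)=0$ becomes $b_1=-Kb_2$. If $g'_s(x)=0$ then $b_1=b_2=0$, so $g$ vanishes on all of $\s_x$ and $\s_x\subseteq V(f\cdot g)$ by Theorem~\ref{Camshaft} (case~{\it 1}). Otherwise $b_2\in C_A^*$ is invertible. Substituting $b_1=-Kb_2$ gives $\vs{(f\cdot g)}(x)=a_1b_1-a_2b_2=-(a_1K+a_2)b_2$ and $\beta(f\cdot g)'_s(x)=a_1b_2+a_2b_1=(a_1-a_2K)b_2$. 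If $(f\cdot g)'_s(x)=0$ we are again done by Theorem~\ref{Camshaft} (case~{\it 3.(a)}, using that $g'_s(x)$ is invertible). If $(f\cdot g)'_s(x)\neq0$, then setting $u:=a_1K+a_2$ and $v:=a_1-a_2K$ we have $v=\bigl(\beta(f\cdot g)'_s(x)\bigr)b_2^{-1}\in C_A^*$, because $C_A^*$ is a loop (Theorem~\ref{centralproduct}). The natural candidate zero is $w:=\alpha+\beta I$ with $I:=uv^{-1}$: since $I\,\beta(f\cdot g)'_s(x)=uv^{-1}vb_2=ub_2=-\vs{(f\cdot g)}(x)$, one gets $(f\cdot g)(w)=\vs{(f\cdot g)}(x)+I\,\beta(f\cdot g)'_s(x)=0$.

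The main obstacle is verifying that $I\in\s_A$, equivalently that $n(I)=1$ and $t(I)=0$ (cf.~\eqref{eq:s_A}), so that $w$ truly lies on $\s_x$. I expect the norm part to follow from a direct expansion using associativity, $K^c=-K$ and $K^2=-1$, which yields $n(u)=n(v)=n(a_1)+n(a_2)+a_1Ka_2^c-a_2Ka_1^c$; since $v\in C_A^*$ has central norm, Proposition~\ref{compatiblecasenorm} and Theorem~\ref{centralproduct} then give $n(I)=n(u)n(v)^{-1}=1$. For the trace I would use that $n(v)$ is central to write $I=n(v)^{-1}uv^c$ and reduce $t(I)$ to $n(v)^{-1}t(uv^c)$; expanding $uv^c=(a_1K+a_2)(a_1^c+Ka_2^c)$ exhibits it as a sum of terms ($a_1Ka_1^c$, $a_2Ka_2^c$ and $-a_1a_2^c+a_2a_1^c$) each of which is anti-self-conjugate, so $t(uv^c)=0$ and hence $t(I)=0$. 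With $I\in\s_A$ established, $w\in\s_x\cap V(f\cdot g)$, which settles the decisive case and proves the inclusion.
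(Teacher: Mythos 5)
Your proof is correct, but it takes a genuinely different route from the paper's in the only nontrivial case, $x\in V(g)\setminus\rr$. The paper argues structurally: from Theorem~\ref{zeros} (using $g'_s(x)\in C_A$) it transfers the zero of $g$ on $\s_x$ to a zero of $g^c$ on $\s_x$, observes that a zero of the left factor $g^c$ is a zero of $g^c\cdot f^c=(f\cdot g)^c$ by Corollary~\ref{formula_associative_case}, and then transfers back from $V((f\cdot g)^c)$ to $V(f\cdot g)$ via Theorem~\ref{zeros} again, using that $((f\cdot g)^c)'_s(x)=((f\cdot g)'_s(x))^c\in C_A$ by formula~\eqref{decomposedconjugate}. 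You instead construct the zero explicitly: with $g(x)=0$ forcing $b_1=-Kb_2$, you exhibit $w=\alpha+\beta\,uv^{-1}$ with $u=a_1K+a_2$, $v=a_1-a_2K$, and verify $uv^{-1}\in\s_A$ by the direct computations $n(u)=n(v)$ and $t(uv^c)=0$ (both of which check out, using associativity, $K^2=-1$, $K^c=-K$, Theorem~\ref{centralproduct} and Proposition~\ref{compatiblecasenorm}). Your degenerate subcases ($g'_s(x)=0$, resp.\ $(f\cdot g)'_s(x)=0$ with $g'_s(x)$ invertible) are correctly disposed of by cases \emph{1} and \emph{3.(a)} of Theorem~\ref{Camshaft}. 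What the paper's route buys is brevity and reuse of already-proved zero-transfer results; what yours buys is an explicit formula for the zero --- indeed $vK=u$ shows your $w$ equals $f^c(x)^{-1}xf^c(x)$, anticipating the formula the paper only derives later in Theorem~\ref{Camshaftassociative}, whose proof relies on Proposition~\ref{pro_associative}; your argument is self-contained, so no circularity arises. The only hypothesis you do not use is $f'_s(x)\in C_A$ in the $V(g)$ case, which is harmless.
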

    
\begin{proof}
If $x\in V(f)$ then $x\in V(f\cdot g)$ by Corollary~\ref{formula_associative_case}. If $x\in V(g)$, from Theorem~\ref{zeros} we get that $\s_x\subseteq V(g^c)$ or $\s_x\cap V(g^c)=\{x'\}$ for some $x'$. Therefore $\s_x\subseteq V(g^c\cdot f^c)$ or $x'\in V(g^c\cdot f^c)$, respectively. In any case, $\s_x \cap V((f\cdot g)^c) \neq \emptyset$, where $(f\cdot g)^c=g^c\cdot f^c$. By formula~\eqref{decomposedconjugate}, the spherical derivative of $(f\cdot g)^c$ at $x$ belongs to $C_A$. In view of Theorem~\ref{zeros}, we obtain that $\s_x\cap V(f\cdot g)\neq \emptyset$.         
\end{proof}

In the associative setting, Theorem~\ref{Camshaftcompatible} can be restated in the following simpler form.

\begin{theorem}\label{Camshaftassociative}
Let $A$ be associative. Let $f,g \in \mc{S}(\OO)$. If $x \in \OO$ is real and $x \in V(f)\cup V(g)$, then $x \in V(f\cdot g)$. More generally,
\begin{enumerate} 
 \item If $\s_x \subseteq V(f)$ or $\s_x \subseteq V(g)$, then $\s_x \subseteq V(f\cdot g)$.
\end{enumerate}
If $x \in \OO \setminus \rr$, and $f'_s(x), g'_s(x)$ and $(f\cdot g)'_s(x)$ belong to $C_A$, then:
\begin{enumerate}
\item[2.] If $\s_x \cap V(f)=\{y\}$ and $\s_x \cap V(g)=\emptyset$, then $\s_x \cap V(f\cdot g)=\{y\}$.
\item[3.] If $\s_x \cap V(f)=\emptyset$ and $\s_x \cap V(g)=\{z\}$, then $f^c(z) \in C_A^*$ and $\s_x\cap V(f\cdot g)=\{f^c(z)^{-1}z f^c(z)\}$.
\item[4.] If $\s_x \cap V(f)=\{y\}$ and $\s_x \cap V(g)=\{z\}$, then one of the following holds:
\begin{enumerate}
\item $\s_x\subseteq V(f\cdot g)$; or
\item $\s_x\cap V(f\cdot g)=\{y\}$;
\end{enumerate}
depending on whether or not $y^c f'_s(x)=f'_s(x)z$.   
\end{enumerate}
\end{theorem}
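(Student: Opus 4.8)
The plan is to reduce everything to the compatible case. Since every associative $^*$-algebra is compatible, Theorem~\ref{Camshaftcompatible} applies verbatim and already delivers the real case, case~1, and, in cases~2--4, the one-sided bounds $\s_x\cap V(f\cdot g)\subseteq\{w\}$ together with the dichotomy of case~4. What is left is to upgrade these inclusions to equalities and to rewrite the exceptional point $w$ in the announced associative form. Throughout I would keep the notation of the proof of Theorem~\ref{Camshaft}: writing $f(\alpha+\beta I)=a_1+Ia_2$ and $g(\alpha+\beta I)=b_1+Ib_2$ with $a_1=\vs f(x)$, $a_2=\beta f'_s(x)$, $b_1=\vs g(x)$, $b_2=\beta g'_s(x)$. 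A first remark, used repeatedly, is that an isolated zero on $\s_x$ forces the relevant spherical derivative to be nonzero, so under the standing hypothesis that $f'_s(x),g'_s(x)\in C_A$ these in fact lie in $C_A^*$ whenever $\s_x$ carries an isolated zero of $f$, respectively $g$; and that the very appearance of the inverse $(f\cdot g)'_s(x)^{-1}$ in the formula for $w$ means $(f\cdot g)'_s(x)\in C_A^*$ there as well.

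Cases~2 and~4 are then immediate and bypass the cumbersome formula for $w$ altogether. In the associative setting Corollary~\ref{formula_associative_case} gives $V(f)\subseteq V(f\cdot g)$, so the zero $y$ of $f$ automatically lies in $\s_x\cap V(f\cdot g)$; combined with the bound $\s_x\cap V(f\cdot g)\subseteq\{w\}$ this forces $w=y$, hence $\s_x\cap V(f\cdot g)=\{y\}$. This settles case~2 and the alternative~4.(b), while~4.(a) is inherited directly from Theorem~\ref{Camshaftcompatible}; the stated trigger $y^cf'_s(x)=f'_s(x)z$ is just the vanishing of $(f\cdot g)'_s(x)=(y^cf'_s(x)-f'_s(x)z)\,g'_s(x)$ after cancelling the invertible factor $g'_s(x)$.

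The genuinely new work is case~3, where $\s_x\cap V(f)=\emptyset$ so no zero can be borrowed from $f$. Here I would first secure non-emptiness by the device already used in Proposition~\ref{pro_associative}: from the isolated zero $z$ of $g$ and $g'_s(x)\in C_A^*$, Theorem~\ref{zeros} produces a zero $z'=g'_s(x)^{-1}z^cg'_s(x)$ of $g^c$ on $\s_x$; by Corollary~\ref{formula_associative_case} this $z'$ is a zero of $g^c\cdot f^c=(f\cdot g)^c$; and a second application of Theorem~\ref{zeros} to $f\cdot g$ (legitimate as $(f\cdot g)'_s(x)\in C_A^*$) transfers a zero of $(f\cdot g)^c$ on $\s_x$ back to a zero of $f\cdot g$ on $\s_x$. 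Thus $\s_x\cap V(f\cdot g)=\{w\}$. To identify $w$, I would drop the now-redundant parentheses in the case~3 formula of Theorem~\ref{Camshaftcompatible} and use that $z$ commutes with $K$ (being in $\cc_K$); the element $a_1-a_2K$ then factors out of both numerator and denominator, leaving $w=(a_1-a_2K)\,z\,(a_1-a_2K)^{-1}$. Since $(a_1-a_2K)^c=a_1^c+Ka_2^c=f^c(z)$, this is $w=f^c(z)^c\,z\,(f^c(z)^c)^{-1}$.

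It remains to prove $f^c(z)\in C_A^*$ and to reconcile the two conjugation forms, which I expect to be the crux. Invertibility in the central cone comes from the closure of $C_A^*$ under products and inverses (Theorem~\ref{centralproduct}): the denominator computation gives $f^c(z)^c\,b_2=\beta\,(f\cdot g)'_s(x)$, whose right-hand side lies in $C_A^*$, while $b_2=\beta g'_s(x)\in C_A^*$, so $f^c(z)^c=(\beta(f\cdot g)'_s(x))\,b_2^{-1}\in C_A^*$ and therefore $f^c(z)\in C_A^*$. Finally, writing $c:=f^c(z)$, the relations $c^{-1}=n(c)^{-1}c^c$ with $n(c)$ central (both valid because $c\in C_A^*$) show that conjugation by $c^c$ equals conjugation by $c^{-1}$, so that $w=c^c\,z\,(c^c)^{-1}=c^{-1}zc=f^c(z)^{-1}z\,f^c(z)$, as claimed. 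The main obstacle is precisely this last point: it is easy to see that $f^c(z)$ is merely invertible, and the clean formula requires the stronger fact that it lies in the central cone, which is exactly what the hypotheses $f'_s(x),g'_s(x),(f\cdot g)'_s(x)\in C_A$ are designed to guarantee.
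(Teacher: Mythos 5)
Your proposal is correct and follows essentially the same route as the paper's proof: cases 1, 2 and 4 are obtained from Theorem~\ref{Camshaftcompatible} together with Corollary~\ref{formula_associative_case} (which plants the zero $y$ of $f$ inside the singleton bound), non-emptiness in case 3 is secured exactly as in Proposition~\ref{pro_associative}, and the formula $w=f^c(z)^{-1}zf^c(z)$ is reached by the same regrouping, cancellation of $g'_s(x)$, and passage from conjugation by $f^c(z)^c$ to conjugation by $f^c(z)^{-1}$ via centrality of the norm. Your closing argument that $f^c(z)\in C_A^*$ (from $f^c(z)^c\,b_2=\beta\,(f\cdot g)'_s(x)$ and closure of $C_A^*$ under products and inverses) is precisely the detail the paper leaves to the reader with the remark that one should ``go through the series of equalities''.
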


\begin{proof}
Point {\it 1} comes verbatim from Theorem~\ref{Camshaftcompatible}.
Point {\it 2} derives from case {\it 2} of the same theorem, if we take into account that in the associative setting $y \in V(f)$ implies $y \in V(f\cdot g)$ (see Corollary~\ref{formula_associative_case}).

As for point {\it 3}, consider the homologous case of Theorem~\ref{Camshaftcompatible}. In the present associative setting, by Proposition~\ref{pro_associative}, the fact that $z \in \s_x \cap V(g)$ implies the existence of a $w \in \s_x\cap V(f\cdot g)$. By the formula given for $w$ in Theorem~\ref{Camshaftcompatible},
\begin{align*}
w&=\big(\vs f(x)(z g'_s(x))- f'_s(x)(z\im(z) g'_s(x)) \big) \big( \vs  f(x) g'_s(x)- f'_s(x)(\im(z)  g'_s(x)) \big)^{-1} \\
&=\big(\vs f(x)z- f'_s(x)z\im(z) \big) g'_s(x) g'_s(x)^{-1} \big( \vs  f(x)- f'_s(x)\im(z) \big)^{-1} \\
&=\big(\vs f(x)- f'_s(x)\im(z) \big) z \big( \vs  f(x)- f'_s(x)\im(z) \big)^{-1} \\
&=\big(\vs f(x)^c+\im(z)f'_s(x)^c \big)^{-1} z \big(\vs f(x)^c+\im(z)f'_s(x)^c \big) \\
&= f^c(z)^{-1}zf^c(z).
\end{align*}
In the last display, the first equality is justified by the associativity assumption; the second one uses the fact that $z\im(z) = \im(z)z$; the third equality takes into account that, for all $a \in Q_A$, $a^{-1} = n(a)^{-1}a^c$ with $n(a) \in \rr$; the last one uses formula~\eqref{decomposedconjugate}. Moreover, if one goes through the series of equalities taking into account that $g'_s(x),(f\cdot g)'_s(x) \in C_A^*$ it turns out that $f^c(z) \in C_A^*$ as well.

Point {\it 4} derives from the homologous case of Theorem~\ref{Camshaftcompatible} by the same reasoning used for point {\it 2}  and by noticing that
\[(y^c f'_s(x)) g'_s(x)-f'_s(x) (z g'_s(x)) = (y^c f'_s(x)-f'_s(x)z) g'_s(x)\]
vanishes if, and only if, $y^c f'_s(x)=f'_s(x)z$.
\end{proof}

Here are some classical examples of cases {\it 1}, {\it 3} and {\it 4.(b)} in the previous theorem.

\begin{example}\label{ex:clifford}
Let $A=\rr_n$ and let $f(x)=e_1$, $g(x)=x^2+1$. Then $f$ and $g$ are slice regular on $Q_A$ and $(f\cdot g)(x)=x^2e_1+e_1 = (x^2+1)e_1$. By direct inspection, $V(f)=\emptyset$, $V(g)=\s_A$ and $V(f\cdot g)=\s_A$.
\end{example}

\begin{example}\label{ex:clifford1}
Suppose $A=\rr_n$ with $n\geq2$ and set $f(x)=e_1$, $g(x)=x-e_2$. Then $f,g \in \mc{SR}(Q_A)$ and $(f\cdot g)(x)=xe_1-e_{12}=(x+e_2)e_1$. Clearly, $V(f)=\emptyset$, $V(g)=\{e_2\}$ while $V(f\cdot g)=\{-e_2\}$.
\end{example}

\begin{example}\label{ex:clifford2}
Let $A=\rr_n$ ($n\geq2$) and let $f(x)=x-e_1$, $g(x)=x-e_2$. Then $f$ and $g$ are slice regular on $Q_A$ and $(f\cdot g)(x)=x^2-x(e_1+e_2)+e_{12}=(x^2+1)-(x-e_1)(e_1+e_2)$. By direct inspection, $V(f)=\{e_1\}$, $V(g)=\{e_2\}$ while $V(f\cdot g)=\{e_1\}$.
\end{example}

We observe that Theorem~\ref{Camshaftassociative} echoes, under different hypotheses, the next remark (a direct consequence of Theorem~\ref{fg-associative}).
\begin{remark}
Suppose $A$ is associative. Let $f,g \in \mc{S}(\OO)$ with $f$ tame. If $y \in \OO$ is such that $f(y)=0$ then $(f \cdot g) (y) = 0$. For $x \in \OO$ such that $f(x)$ is invertible, $(f \cdot g) (x) = f(x) g(f(x)^{-1} x f(x))$ vanishes if, and only if, $g$ vanishes at $z=f(x)^{-1} x f(x)$; moreover, $f^c(z)$ is invertible and $x = f^c(z)^{-1} z f^c(z)$.
\end{remark}


\subsection{The slice regular case}\label{sec:fgregular}
  
For slice regular functions on a slice domain, Proposition~\ref{Vfg} strengthens even without the compatibility assumption on the algebra.

\begin{proposition}\label{unions}
Assume $A$ is nonsingular and $\OO$ is a slice domain. Let $f,g\in\mc{SR}(\OO)$ be tame. If, for each $x\in\OO\setminus\rr$, $f'_s(x), g'_s(x)$ and $(f\cdot g)'_s(x)$   belong to $C_A$, then
\[\bigcup_{x\in V(f\cdot g)}\s_x =\bigcup_{x\in V(f)\cup V(g)}\s_x.\]
\end{proposition}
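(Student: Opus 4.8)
The plan is to reduce the claimed equality of circular sets to an equality of zero sets of \emph{slice preserving} functions, namely the normal functions $N(f),N(g),N(f\cdot g)$, and then to exploit that the normal function of a slice product factors. First I would dispose of the degenerate case in which $f\equiv0$ or $g\equiv0$: then $f\cdot g\equiv0$, so that $V(f\cdot g)=\OO$, and one of $V(f),V(g)$ equals $\OO$ as well, whence both sides of the asserted identity coincide with the circular set $\OO$. So from now on I assume $f\not\equiv0$ and $g\not\equiv0$.

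Since $A$ is nonsingular and $\OO$ is a slice domain, Proposition~\ref{SRnonsingular} guarantees that $\mc{SR}(\OO)$ is nonsingular, whence $N(f)\not\equiv0$ and $N(g)\not\equiv0$. All the hypotheses of Proposition~\ref{Nfgregular} are then met, so that $N(f\cdot g)=N(f)\,N(g)$ holds on \emph{all} of $\OO$ and, in addition, $f\cdot g$ is a tame element of $\mc{SR}(\OO)$. Next I would convert each of the three circular sets in the statement into the zero set of a normal function by means of case {\it 2} of Corollary~\ref{corollaryzeros}: since $f$ and $g$ are tame, $A$ is nonsingular, and $f'_s(x),g'_s(x)\in C_A$ for $x\in\OO\setminus\rr$, that corollary gives $\bigcup_{x\in V(f)}\s_x=V(N(f))$ and $\bigcup_{x\in V(g)}\s_x=V(N(g))$; applied to $f\cdot g$, which we just saw is tame and satisfies $(f\cdot g)'_s(x)\in C_A$ by hypothesis, it gives $\bigcup_{x\in V(f\cdot g)}\s_x=V(N(f\cdot g))$.

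It then remains to observe that $V(N(f)\,N(g))=V(N(f))\cup V(N(g))$. As $f$ and $g$ are tame, $N(f)$ and $N(g)$ are slice preserving, i.e.\ induced by $\rr_\cc$-valued stem functions $H_f,H_g$, and the slice product $N(f)\,N(g)$ is induced by the pointwise product $H_fH_g$. Because $\rr_\cc\cong\cc$ is a field, $H_f(z)H_g(z)=0$ if and only if $H_f(z)=0$ or $H_g(z)=0$, so $V(H_fH_g)=V(H_f)\cup V(H_g)$; circularizing yields the factorization of zero sets. Chaining the four identities now gives the assertion:
\[
\bigcup_{x\in V(f\cdot g)}\s_x=V(N(f\cdot g))=V(N(f)\,N(g))=V(N(f))\cup V(N(g))=\bigcup_{x\in V(f)\cup V(g)}\s_x.
\]
(The inclusion $\subseteq$ here is also a special case of Proposition~\ref{Vfg}, which could be cited instead for one direction.)

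The main obstacle is obtaining the \emph{global} identity $N(f\cdot g)=N(f)\,N(g)$ on the whole of $\OO$ rather than merely on a dense open subset: Proposition~\ref{reciprocal} only provides it on $\OO\setminus(V(N(f))\cup V(N(g)))$, and it is precisely slice regularity together with the slice-domain hypothesis — through the density statement of Remark~\ref{dense} and the propagation by continuity carried out in Proposition~\ref{Nfgregular} — that upgrades it to an identity on all of $\OO$. This is exactly what replaces the compatibility assumption used in the analogous Proposition~\ref{vfgcompatible}.
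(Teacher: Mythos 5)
Your proof is correct and follows essentially the same route as the paper's: handle the degenerate case via the nonsingularity of $\mc{SR}(\OO)$ from Proposition~\ref{SRnonsingular}, apply Proposition~\ref{Nfgregular} to get $N(f\cdot g)=N(f)N(g)$ on all of $\OO$ together with tameness of $f\cdot g$, and conclude via case {\it 2} of Corollary~\ref{corollaryzeros}. The only difference is that you spell out the factorization $V(N(f)N(g))=V(N(f))\cup V(N(g))$ through the $\rr_\cc$-valued stem functions, a step the paper leaves implicit.
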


\begin{proof}
By Proposition~\ref{SRnonsingular}, the $^*$-algebra $\mc{SR}(\OO)$ is nonsingular: if $N(f) \equiv 0$ or $N(g) \equiv 0$ then $f \equiv 0$ or $g\equiv 0$. In such a case, $f\cdot g \equiv 0$ so that the thesis is immediately verified.

We now consider the case when neither $N(f)$ nor $N(g)$ vanish identically. According to Proposition~\ref{Nfgregular}, $f\cdot g$ is tame in $\OO$ and $N(f\cdot g) = N(f) N(g)$ in the entire domain. As a consequence, $V(N(f\cdot g)) = V(N(f))\cup V(N(g))$, whence the thesis by Corollary~\ref{corollaryzeros}.
\end{proof}

We conclude showing that a slice regular function $f$ that is tame can be a zero divisor only if $N(f)\equiv0$. 
     
\begin{proposition}
Assume that $\OO$ is a slice domain or a product domain.
Let $f\in\mc{SR}(\OO)$ be tame, with $N(f)\not\equiv0$. Then, for every $g\in\mc{S}^0(\OO)$, 
$f\cdot g\equiv0$ or $g\cdot f\equiv0$ implies  $g\equiv0$. 
\end{proposition}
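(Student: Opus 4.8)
The plan is to exploit the invertibility of $f$ on the dense open set $\OO':=\OO\setminus V(N(f))$ and then recover the conclusion on all of $\OO$ by continuity. First I would record the two facts that make this possible: since $f$ is tame and $N(f)\not\equiv0$, Remark~\ref{dense} guarantees that $\OO'$ is dense in $\OO$, while Proposition~\ref{reciprocal} provides the reciprocal $f^{-\punto}=N(f)^{-\punto}\cdot f^c=f^c\cdot N(f)^{-\punto}$ in $\mc{S}(\OO')$ (the second expression being available because the slice preserving function $N(f)$, and hence $N(f)^{-\punto}$, is central in $\mc{S}(\OO')$). Because slice multiplication is compatible with restriction, the hypothesis $f\cdot g\equiv0$ (resp.\ $g\cdot f\equiv0$) on $\OO$ descends to the corresponding identity for the restrictions on $\OO'$, where $f$ is invertible.

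For the case $f\cdot g\equiv0$ I would left-multiply by $f^{-\punto}$ and cancel $f$, the only subtlety being that $\mc{S}(\OO')$ need not be associative, so every regrouping must be justified. Writing $f^{-\punto}\cdot(f\cdot g)=N(f)^{-\punto}\cdot\big(f^c\cdot(f\cdot g)\big)$ uses only the centrality of $N(f)^{-\punto}$; then the associator identity $(f,f^c,g)=0$ from Proposition~\ref{reciprocal} (equivalently property~{\it 3} of Theorem~\ref{centralproduct} applied to $f^c\in C_{\mc{S}(\OO')}$) gives $f^c\cdot(f\cdot g)=(f^c\cdot f)\cdot g=N(f)\cdot g$, whence $f^{-\punto}\cdot(f\cdot g)=g$ and therefore $g\equiv0$ on $\OO'$. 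For the case $g\cdot f\equiv0$ I would symmetrically right-multiply by $f^{-\punto}=f^c\cdot N(f)^{-\punto}$; here I would invoke the alternating property of the associator to pass from $(f,f^c,g)=0$ to $(g,f,f^c)=0$, obtaining $(g\cdot f)\cdot f^c=g\cdot(f\cdot f^c)=g\cdot N(f)$ and hence again $g\equiv0$ on $\OO'$.

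Finally I would upgrade the vanishing on $\OO'$ to vanishing on all of $\OO$: since $g\in\mc{S}^0(\OO)$ is continuous and $\OO'$ is dense in $\OO$, the closed set $V(g)$ contains the closure of $\OO'$, namely $\OO$, so $g\equiv0$. I expect the main obstacle to be precisely the bookkeeping of associators in the nonassociative algebra $\mc{S}(\OO')$: the entire cancellation rests on $f$ lying in the central cone $C_{\mc{S}(\OO')}$, a membership guaranteed by tameness, which is what forces $(f,f^c,\,\cdot\,)$ to vanish and lets $N(f)$ and $N(f)^{-\punto}$ associate and commute freely with the remaining factors. Once these vanishings are in hand the rest is routine.
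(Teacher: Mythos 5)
Your proof is correct and follows essentially the same route as the paper: restrict to the dense open set $\OO'=\OO\setminus V(N(f))$ where $f$ is invertible (Proposition~\ref{reciprocal} and Remark~\ref{dense}), cancel $f$, and conclude by continuity of $g$. The only cosmetic difference is that the paper justifies the regrouping $(f^{-\punto}\cdot f)\cdot g=f^{-\punto}\cdot(f\cdot g)$ in one stroke via Lemma~\ref{nonassociative} (the identity $(x^{-1},x,y)=0$ in an alternative algebra), whereas you unpack $f^{-\punto}=N(f)^{-\punto}\cdot f^c$ and track the associators by hand -- both are valid.
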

\begin{proof}
By Proposition~\ref{reciprocal} and Remark~\ref{dense}, $f$ admits a reciprocal $f^{-\punto}$ in a domain $\OO'$ that is dense in $\OO$. If $f\cdot g\equiv0$ then $g=(f^{-\punto}\cdot f)\cdot g=f^{-\punto}\cdot (f\cdot g)\equiv 0$ in $\OO'$ by Lemma~\ref{nonassociative}. By continuity, $g\equiv0$ in $\OO$. A similar argument can be used when $g\cdot f\equiv0$.
\end{proof}

The previous result immediately implies our final statement, which concerns the algebra of slice regular functions in a case in which it is not singular.
\begin{corollary}
If $A$ is nonsingular and $\OO$ is a slice domain, then each element $f\in\mc{SR}(\OO)$ that is tame cannot be a zero divisor in $\mc{S}^0(\OO)$.
\end{corollary}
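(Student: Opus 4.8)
The plan is to obtain this corollary directly from the previous proposition, with the nonsingularity of $\mc{SR}(\OO)$ as the single extra ingredient. Recall that, by the definition adopted in the paper, a zero divisor is a \emph{nonzero} element; so it suffices to show that every tame $f\in\mc{SR}(\OO)$ with $f\not\equiv0$ has the property that, for all $g\in\mc{S}^0(\OO)$, each of $f\cdot g\equiv0$ and $g\cdot f\equiv0$ forces $g\equiv0$. This is precisely the conclusion of the preceding proposition (whose domain hypothesis is met, since a slice domain is in particular a slice domain or a product domain), provided we can verify its standing hypothesis $N(f)\not\equiv0$.

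First I would invoke Proposition~\ref{SRnonsingular}: since $A$ is nonsingular and a slice domain is in particular a union of slice domains, the $^*$-algebra $\mc{SR}(\OO)$ is nonsingular. Recalling that the norm of an element $f$ of $\mc{S}(\OO)$ is exactly its normal function $N(f)$, nonsingularity says that $N(f)\equiv0$ can occur only when $f\equiv0$. Hence, for our nonzero $f$ we get $N(f)\not\equiv0$, which supplies the missing hypothesis. Feeding $f$ (tame, with $N(f)\not\equiv0$) into the previous proposition then yields that no nonzero $g\in\mc{S}^0(\OO)$ can annihilate $f$ on either side, so $f$ is not a zero divisor in $\mc{S}^0(\OO)$.

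There is essentially no obstacle here: all the substantive content lives in Propositions~\ref{SRnonsingular} and~\ref{reciprocal} and in the immediately preceding statement, and the corollary is a bookkeeping consequence of chaining them. The only point worth flagging is the necessity of the slice-domain hypothesis. On a product domain, $\mc{SR}(\OO)$ may be singular (cf.\ Example~\ref{1fin} and the proof of Proposition~\ref{SRnonsingular}), so the implication ``$f\not\equiv0\Rightarrow N(f)\not\equiv0$'' can fail and the reduction collapses; this is exactly why we assume both that $A$ is nonsingular and that $\OO$ is a slice domain.
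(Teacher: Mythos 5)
Your proof is correct and matches the paper's intended argument: the paper derives this corollary in one line from the preceding proposition, using Proposition~\ref{SRnonsingular} to guarantee $N(f)\not\equiv0$ for nonzero tame $f$, exactly as you do. Your handling of the $f\equiv0$ case via the definition of zero divisor and your remark on why the slice-domain hypothesis is needed are both consistent with the paper.
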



\section*{Concluding remarks}

Algebra and analysis had already proven to be as beautifully interwoven in the theory of slice regular quaternionic functions as they are in the theory of holomorphy.
We believe that our present work shows that this is the case on all alternative $^*$-algebras. Moreover, we have tried to give an idea of how intricate the weave grows as we relax the hypotheses on the algebra under consideration.

Besides their intrinsic interest, our new results are meant to be the grounds for the study of singularities of slice regular functions over alternative $^*$-algebras. An article on this subject, in the spirit of the quaternionic work~\cite{singularities}, is already in preparation.



\bibliographystyle{abbrv}
\bibliography{Laurent}


\end{document}